\documentclass[a4paper,11pt,reqno]{amsart}


\usepackage{url}
\usepackage[colorlinks=true, linkcolor=blue, citecolor=ForestGreen]{hyperref}
\usepackage{cite}

\usepackage{latexsym,amsmath,amssymb,amsfonts,amsthm}
\usepackage{mathrsfs}
\usepackage{mathtools}
\usepackage{dsfont}
\usepackage{bbm}
\usepackage{soul}     
\usepackage[utf8]{inputenc}

\usepackage[usenames, dvipsnames]{color}
\usepackage[usenames, dvipsnames, cmyk]{xcolor}
\usepackage{graphicx}
\usepackage[scriptsize,hang,raggedright]{subfigure}
\usepackage{multirow}
\usepackage{enumerate}
\usepackage{enumitem}

\usepackage{a4wide}
\usepackage{epsfig,epstopdf}


\setlength{\footskip}{30pt}


\setcounter{tocdepth}{2}
\numberwithin{equation}{section}

\definecolor{grey}{rgb}{.7,.7,.7}
\definecolor{refkey}{gray}{.45}
\definecolor{labelkey}{gray}{.45}




\newcommand{\xupref}[2]{\hspace{-0.3ex}\stackrel{\eqref{#1}}{#2}} 




\newtheorem{theorem}{Theorem}[section]
\newtheorem{proposition}[theorem]{Proposition}
\newtheorem{lemma}[theorem]{Lemma}

\theoremstyle{remark}
\newtheorem{remark}[theorem]{Remark}
\theoremstyle{definition}
\newtheorem{definition}[theorem]{Definition}
\newtheorem{example}[theorem]{Example}


\usepackage{tikz}
\usepackage{pgf,pgfplots}
\usetikzlibrary{calc}
\usetikzlibrary{decorations.markings}
\usetikzlibrary{decorations.pathmorphing}
\usetikzlibrary{decorations.shapes}
\usetikzlibrary{shapes,arrows,shapes.geometric,patterns,fadings}


\newcommand{\R}{\mathbb R}

\newcommand{\dd}{\,\mathrm{d}}
\renewcommand{\setminus}{\backslash}

\newcommand{\defeq}{\coloneqq}

\newcommand{\nl}{\mathcal{E}}
\newcommand{\Hone}{\mathcal{H}^1}

\newcommand{\pol}{\mathscr{P}}
\newcommand{\p}{\mathcal{P}}
\newcommand{\side}[2]{\mkern2mu \overline{\mkern-2mu #1 #2}}
\newcommand{\ka}{\kappa}
\newcommand{\per}{\mathrm{Per}}

\newcommand{\calI}{\mathcal{I}}



\newcommand{\ba}{\begin{array}}
\newcommand{\ea}{\end{array}}

\newcommand{\tld}[1]{\widetilde{#1}}
\newcommand{\bthm}{\begin{theorem}}
\newcommand{\ethm}{\end{theorem}}
\newcommand{\bprop}{\begin{proposition}}
\newcommand{\eprop}{\end{proposition}}
\newcommand{\blemma}{\begin{lemma}}
\newcommand{\elemma}{\end{lemma}}
\newcommand{\bexmpl}{\begin{example}}
\newcommand{\eexmpl}{\end{example}}

\newcommand{\beqn}{\begin{equation}}
\newcommand{\eeqn}{\end{equation}}
\newcommand{\beqns}{\begin{equation*}}
\newcommand{\eeqns}{\end{equation*}}

\newcommand{\pr}{\prime}

\newcommand{\Rd}{\mathbb{R}^d}

\renewcommand{\leq}{\leqslant}
\renewcommand{\geq}{\geqslant}

\definecolor{mygreen}{rgb}{0.1,0.75,0.2}

\newcounter{myenumi}
\setcounter{myenumi}{0}

\DeclareMathOperator{\dive}{div}

\DeclareMathOperator*{\dist}{dist}



\title[Overdetermined Problems for Triangles and Quadrilaterals]{Riesz-type Inequalities and Overdetermined Problems for Triangles and Quadrilaterals}

\author{Marco Bonacini}
\address[Marco Bonacini]{Department of Mathematics, University of Trento, Italy}
\email{marco.bonacini@unitn.it}

\author{Riccardo Cristoferi}
\address[Riccardo Cristoferi]{Department of Mathematics - IMAPP, Radboud University, Nijmegen, The Netherlands}
\email{riccardo.cristoferi@ru.nl}

\author{Ihsan Topaloglu}
\address[Ihsan Topaloglu]{Department of Mathematics and Applied Mathematics, Virginia Commonwealth University, Richmond, VA, USA}
\email{iatopaloglu@vcu.edu}

\date{\today}                                        

\subjclass[2020]{35N25, 49Q10, 49Q20, 49J10, 49J40, 49K21}
\keywords{Shape optimization, Riesz's rearrangement inequality, polygons, P\'{o}lya and Szeg\H{o} conjecture, overdetermined problem}            
\thanks{This is a post-peer-review, pre-copyedit version of an article published in the Journal of Geometric Analysis. The final
authenticated version is available online at: \url{https://doi.org/10.1007/s12220-021-00737-7}.}                                                  

\begin{document}

\begin{abstract}
We consider Riesz-type nonlocal interaction energies over polygons. We prove the analog of the Riesz inequality in this discrete setting for triangles and quadrilaterals, and obtain that among all $N$-gons with fixed area, the nonlocal energy is maximized by a regular polygon, for $N=3,4$. Further we derive necessary first-order stationarity conditions for a polygon with respect to a restricted class of variations, which will then be used to characterize regular $N$-gons, for $N=3,4$, as solutions to an overdetermined free boundary problem.
\end{abstract}

\maketitle


\section{Introduction}\label{sec:intro}

In this paper we study a class of nonlocal repulsive energies of generalized Riesz-type on polygons. We consider the nonlocal energy
\begin{equation}\label{eq:nonlocal_energy}
\nl(E)\defeq \int_E\int_E K(|x-y|) \dd x \dd y
\end{equation}
defined on measurable subsets $E\subset\R^2$ with finite Lebesgue measure. We assume that the kernel $K$ satisfies the following assumptions:
\begin{enumerate}[label = (K\arabic*)]
	\addtolength{\itemsep}{6pt}
\item\label{ass:regular} $K\in C^1((0,\infty))$, $K\geq0$;
\item\label{ass:decreasing} $K$ is strictly decreasing;
\item\label{ass:integrable} $K$ satisfies
\begin{equation} \label{eq:assintegrable}
\int_0^1 K(r)\,r\dd r <\infty.
\end{equation}
\end{enumerate}
The kernel $K$ is possibly singular at the origin, and the integrability condition \eqref{eq:assintegrable} guarantees that the energy \eqref{eq:nonlocal_energy} is finite on sets with finite measure (see Remark~\ref{rmk:finiteness}). The prototype case is the Riesz kernel $K(r)=r^{-\alpha}$, with $\alpha\in(0,2)$.

It is well-known that the energy \eqref{eq:nonlocal_energy} (in any dimension) is uniquely maximized by the ball under volume constraint, as a consequence of Riesz's rearrangement inequality. Moreover, at least in the case of the Riesz kernels, balls are characterized as the unique critical points for the energy \eqref{eq:nonlocal_energy} under volume constraint, in the following sense. We define the \emph{potential} associated to a measurable set $E\subset\R^2$ with finite measure as
\begin{equation}\label{eq:potential}
v_E(x)\defeq \int_E K(|x-y|) \dd y,
\end{equation}
and say that a set $E$ is stationary for $\nl$ with respect to area-preserving variations if $v_E$ is constant on $\partial E$. It was proved in a series of contributions \cite{ChNeuTo20,Fraenkel00,LuZhu12,Reichel09} via moving plane methods, and in full generality for Riesz kernels in \cite{GomezSerrano_et_al_19} via a continuous Steiner symmerization argument, that balls are the only sets which enjoy this property: in other words, when defined over \emph{all} measurable sets of fixed measure, the overdetermined problem for the potential enforces the symmetry of the set.

The scope of this paper is to investigate the same two questions in a discrete setting, namely when restricting the class of sets on which we evaluate the energy to convex polygons with a fixed number of sides. While on the one hand this restriction simplifies some aspects of the problem by essentially reducing it to a finite dimensional problem, on the other hand it introduces new challenges and requires new techniques, as classical arguments such as moving plane methods do not apply in restricted classes.

\medskip

We first consider the problem of the area-constrained maximization of the nonlocal energy $\nl$ in the class $\pol_N$ of \emph{all} polygons in $\R^2$ with $N\geq3$ sides: for $m>0$, 
\begin{equation} \label{eq:max}
\max\bigl\{ \nl(\p) \,:\, \p\in\pol_N,\, |\p|=m \bigr\},
\end{equation}
where $|\mathcal{P}|\defeq\mathcal{L}^2(\mathcal{P})$ denotes the area of a polygon $\mathcal{P}\in\pol_N$. It is in general expected that for each fixed number of sides the regular $N$-gon is the unique maximizer of \eqref{eq:max}. In our first main result we show that this is true in the case of triangles and quadrilaterals.

\begin{theorem}\label{thm1}
The equilateral triangle is the unique (up to rigid movements) maximizer of $\nl$ in $\pol_3$ under area constraint, and the square is the unique (up to rigid movements) maximizer of $\nl$ in $\pol_4$ under area constraint.
\end{theorem}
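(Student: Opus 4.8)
The plan is to carry out the classical P\'olya--Szeg\H{o} reduction to the regular polygon by a finite chain of Steiner symmetrizations, which works precisely because $N\in\{3,4\}$. Two facts are used repeatedly. Since $K$ is radially symmetric and non-increasing, Steiner symmetrization of any measurable set $E$ of finite measure with respect to a line does not decrease $\nl(E)$ (a form of the Riesz rearrangement inequality); and since $K$ is \emph{strictly} decreasing by \ref{ass:decreasing}, equality holds only if $E$ agrees, up to a translation parallel to the symmetrization direction and a null set, with its symmetral. First one checks, by a compactness argument, that the maximum in \eqref{eq:max} is attained: after normalizing a maximizing sequence by rigid motions, the polygons cannot degenerate — a collapsing or escaping vertex would make the energy drop below $\sup\{\nl(\p):\p\in\pol_N,\,|\p|=m\}$ — and $\nl$ is continuous under $L^1$-convergence of the indicators (the singularity of $K$ being controlled by \ref{ass:integrable}, cf.\ Remark~\ref{rmk:finiteness}), so a maximizer $\p^*$ exists. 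It then remains to show that every maximizer is the regular $N$-gon.

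For $N=3$, place one side $\sigma$ of $\p^*$ on a horizontal line. Every horizontal slice of $\p^*$ is an interval whose length is an affine function of the height, so recentering these slices — i.e.\ Steiner symmetrization about the vertical line through the midpoint of $\sigma$ — yields exactly the isosceles triangle with the same side $\sigma$ and the same height, in particular still a triangle. This symmetrization cannot strictly increase $\nl$ by maximality, so by the equality case $\p^*$ is already symmetric about that line, i.e.\ isosceles with respect to $\sigma$. Repeating for each of the three sides, $\p^*$ is isosceles with respect to all of them, hence equilateral; uniqueness up to rigid motions is immediate.

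For $N=4$ one needs two symmetrizations that keep us in $\pol_4$. First, the Steiner symmetrization of a quadrilateral in a direction parallel to a diagonal $AC$ is the kite symmetric with respect to the perpendicular bisector of $AC$: on each side of the line $AC$ the (possibly disconnected) slices parallel to $AC$ have total length depending on the distance to $AC$ as a concave piecewise-affine function — this covers the non-convex case as well — so the symmetral is always a convex quadrilateral. Applying this along a diagonal, using the equality case, and then repeating for the other diagonal forces $\p^*$ to be a convex quadrilateral symmetric with respect to the perpendicular bisectors of both diagonals; an elementary geometric argument then shows such a quadrilateral has perpendicularly bisecting diagonals, i.e.\ is a rhombus. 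Second, a rhombus is a parallelogram, so all its slices parallel to a fixed pair of opposite sides have the \emph{same} length; hence the Steiner symmetrization of a rhombus in the direction of a side is a rectangle of equal area, still in $\pol_4$. Applying this to $\p^*$ (now a rhombus) and the equality case once more, $\p^*$ is simultaneously a rhombus and a rectangle, hence a square; uniqueness up to rigid motions follows.

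I expect the main obstacle to be the package of geometric lemmas pinning down which Steiner symmetrizations stay inside $\pol_N$ — that a triangle symmetrized along a side is a triangle, that a quadrilateral symmetrized along a diagonal is a convex kite, and above all that a rhombus symmetrized along a side is a rectangle; it is exactly this chain of reductions that has no counterpart for $N\geq5$. A further delicate point is the equality case of the Steiner inequality under the strict monotonicity \ref{ass:decreasing}, which is what turns ``not improvable'' into ``already symmetric'' and hence gives uniqueness. Finally, in the case $N=4$ one must separately rule out that a maximizing sequence degenerates to a triangle; this follows from the $N=3$ result together with the fact that the potential $v_{\p}$ is not constant on the boundary of an equilateral triangle, so a small outward perturbation near a side midpoint produces a genuine quadrilateral of strictly larger energy, showing $\sup_{\pol_4}\nl>\sup_{\pol_3}\nl$.
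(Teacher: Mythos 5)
Your strategy is sound and, modulo the points below, would prove Theorem~\ref{thm1}; but it is organized quite differently from the paper's argument. The paper applies to an \emph{arbitrary} triangle or quadrilateral an \emph{infinite} chain of Steiner symmetrizations (P\'olya--Szeg\H{o}: side of a triangle $\to$ isosceles, then iterate; diagonal of a quadrilateral $\to$ kite $\to$ rhombus $\to$ rectangle $\to$ rhombus $\to\cdots$), shows the side lengths converge to those of the regular $N$-gon, and concludes maximality from the monotonicity of $\nl$ along the chain (Lemma~\ref{lem:Steiner_symm}) plus passage to the limit; uniqueness then comes from the equality case (Lemma~\ref{lem:Steiner_uniq}). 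You instead run a \emph{finite} rigidity argument on a \emph{maximizer}: each of a handful of symmetrizations cannot increase the energy, so by the equality case the maximizer is already symmetric, and three or four such symmetries pin down the regular polygon. The individual geometric building blocks are identical (triangle symmetrized along a side is the isosceles triangle; quadrilateral along its interior diagonal is a convex kite; rhombus along a side is a rectangle), and both proofs lean on assumption \ref{ass:decreasing} through the same two lemmas. What your route buys is that it avoids the analysis of the recursions $a_n$ and the limit passage, and it makes the uniqueness statement essentially automatic at each step. What it costs is the entire existence discussion, which the paper's argument bypasses completely: since the regular polygon is exhibited as a limit of an energy-increasing chain starting from \emph{any} competitor, the paper never needs to know a priori that the supremum in \eqref{eq:max} is attained.

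That existence step is the one genuine soft spot in your proposal, and for $N=4$ it is not a routine compactness check. Non-degeneration of a maximizing sequence (no thinning, no escaping vertex) can be handled as you indicate, using \ref{ass:integrable} and the monotone decay of $K$ to its limit at infinity. But ruling out collapse of a quadrilateral onto a triangle requires the strict inequality $\sup_{\pol_4}\nl>\sup_{\pol_3}\nl$, and your justification --- that $v_T$ is non-constant on the boundary of the equilateral triangle $T$ and that an outward bump near a side midpoint therefore raises the energy to first order after restoring the area --- is asserted, not proved. Making it rigorous requires comparing $v_T$ at the midpoint of a side with the Lagrange multiplier $\sigma/(2|T|)$ of \eqref{lagrangemult} (or, equivalently, with the average of $v_T$ over the side), which is exactly the kind of pointwise information about the potential that the rest of the paper goes to some lengths to avoid needing. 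Until that inequality is established, your proof of the $N=4$ case is incomplete, whereas the paper's iterative scheme needs no such input. A minor further remark: in the non-convex case the symmetrization should be taken along the \emph{interior} diagonal (as the paper does), so that the two half-slices are chords of genuine triangles; your parenthetical about disconnected slices is then unnecessary, and non-convex maximizers are excluded at once because a non-convex quadrilateral cannot coincide, up to a translation and a null set, with its convex kite symmetral.
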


The proof relies on the combination of two properties that had already been established in the literature and are well-known to experts: (a) the fact that the nonlocal energy is increasing under Steiner symmetrization of a set due to classical rearrangement inequalities (see \cite{Li2}); and, (b) the observation, originally due to P\'{o}lya and Szeg\H{o}, that for any given triangle or quadrilateral it is possible to find a sequence of Steiner symmetrizations which converge to an equilateral triangle or to a square, respectively. This strategy was used by P\'{o}lya and Szeg\H{o} \cite[p.~158]{PoSz51} to prove their conjecture about the optimality of the regular $N$-gon for various classical shape functionals, such as the principal eigenvalue of the Laplacian, the torsional rigidity, and the electrostatic capacity, for $N=3,4$. The main drawback of this approach is that, for more than four sides, it seems not possible to construct in an easy way a sequence of symmetrizations converging to the regular $N$-gon and preserving the number of sides at each step. Therefore the extension of Theorem~\ref{thm1} to the case $N\geq5$ seems to be, as far as we know, an interesting open problem.

Besides the above mentioned conjecture by P\'{o}lya and Szeg\H{o}, solved only for the logarithmic capacity in \cite{SoZa04}, the problem of optimality of regular $N$-gons for variational functionals has been the object of several contributions. Among these, we mention the papers \cite{BucFra21,FrGaLa13,Nit14}, dealing with various shape optimization problems on polygons involving spectral functionals, and \cite{BuFr16}, where it is proved that the regular polygon minimizes the Cheeger constant among polygons with fixed area and number of sides.

\medskip

Next, we turn to the second main question that we address in this paper, namely whether the regular $N$-gon is characterized by the stationarity conditions for problem \eqref{eq:max}, as it is the case for the ball. Of course, we need to consider a notion of criticality with respect to variations that preserve the polygonal structure and the number of sides. Following \cite{BuFr16,FraVel19}, in Section~\ref{sec:overdetermined} we introduce two specific classes of perturbations of a given polygon: the first is obtained by translating a side of the polygon parallel to itself, the second by rotating a side with respect to its midpoint. We then show that, for $N=3$ and $N=4$ sides, the unique $N$-gon which is stationary with respect to these two families of perturbations, under an area or a perimeter constraint, is the equilateral triangle or the square, respectively.

In order to state precisely our second main result, we need to fix some notation that will be used throughout the paper. Given two points $P,Q\in\R^2$, we denote by $\side{P}{Q}\defeq\{tP+(1-t)Q \,:\, t\in[0,1]\}$ the segment joining $P$ and $Q$. For $N\geq3$, let $\p\in\pol_N$ be a polygon with $N$ vertices $P_1,\ldots,P_N$. For notational convenience we also set $P_0\defeq P_N$, $P_{N+1}\defeq P_1$. We let for $i\in\{1,\ldots,N\}$:
\begin{itemize}
\item $\nu_i$ be the exterior unit normal to the side $\side{P_i}{P_{i+1}}$,
\item $\ell_i$ be the length of the side $\side{P_i}{P_{i+1}}$,
\item $\theta_i$ be the (interior) angle at the vertex $P_i$,
\item $M_i$ be the midpoint of the side $\side{P_i}{P_{i+1}}$.
\end{itemize}
Denoting by $v_\p$ the potential associated with the polygon $\p$ according to \eqref{eq:potential}, we then consider the following two conditions:
\begin{equation}\label{eq:sliding}
\frac{1}{\ell_i}\int_{\side{P_i}{P_{i+1}}} v_{\p}(x)\dd\Hone(x) = \frac{1}{\ell_j}\int_{\side{P_j}{P_{j+1}}} v_{\p}(x)\dd\Hone(x) \quad\text{for all }i,j\in\{1,\ldots,N\},
\end{equation}
which corresponds to the criticality condition for the energy $\nl$ under an area constraint, when sides are translated parallel to themselves, and
\begin{equation}\label{eq:tilting}
\int_{\side{P_i}{M_i}} v_{\p}(x)|x-M_i|\dd\Hone(x) = \int_{\side{P_{i+1}}{M_i}} v_{\p}(x)|x-M_i|\dd\Hone(x) \quad\text{for all }i\in\{1,\ldots,N\},
\end{equation}
which corresponds to the criticality condition for the energy $\nl$ under an area constraint, when a side is rotated around its midpoint. The derivation of \eqref{eq:sliding} and \eqref{eq:tilting} will be given in Section~\ref{sec:overdetermined}, see in particular Theorem~\ref{thm:stationarity}. Our second result is the following.

\begin{theorem}\label{thm2}
If $\p\in\pol_3$ obeys condition \eqref{eq:tilting}, then $\p$ is an equilateral triangle. If $\p\in\pol_4$ obeys conditions \eqref{eq:sliding} and \eqref{eq:tilting}, then $\p$ is a square.
\end{theorem}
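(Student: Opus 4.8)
The plan is to exploit the monotonicity of $K$ together with the specific structure of the integral conditions \eqref{eq:sliding} and \eqref{eq:tilting}, reducing each to a comparison between lengths of sides or between the two half-sides determined by a midpoint. For the triangle case, I would first analyze condition \eqref{eq:tilting} at a single vertex $M_i$: the integrand $v_\p(x)|x-M_i|$ is weighted symmetrically in $|x-M_i|$ on the two half-segments $\side{P_i}{M_i}$ and $\side{P_{i+1}}{M_i}$, so the equality forces a balance between the values of $v_\p$ on the two sides of the midpoint. The key monotonicity input is that $v_\p(x) = \int_\p K(|x-y|)\dd y$ is, for $x$ ranging over a fixed side, larger where the bulk of $\p$ is closer; more precisely, moving along a side toward the "fatter" part of the triangle increases $v_\p$. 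I would make this rigorous by computing the derivative of $v_\p$ along the side (differentiating under the integral, using \ref{ass:regular}) and showing it has a sign determined by the geometry, so that \eqref{eq:tilting} at $M_i$ is equivalent to a symmetry of $\p$ about the perpendicular bisector of $\side{P_i}{P_{i+1}}$ through $M_i$ — i.e., $|P_i - (\text{opposite vertex})| = |P_{i+1} - (\text{opposite vertex})|$, so the triangle is isosceles with respect to that side. Since this holds for all three sides, the triangle is equilateral.

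For the quadrilateral case I would combine both conditions. Condition \eqref{eq:tilting} at each midpoint again gives a reflection-symmetry/isosceles-type balance, now relating the two edges adjacent to each side through the lengths of the two diagonals-like segments from the opposite vertices; carrying this out at all four midpoints should force the quadrilateral to be a parallelogram or, more strongly, to have pairs of equal opposite sides and equal diagonals. Condition \eqref{eq:sliding} then enters: the average of $v_\p$ over a side is, by the monotonicity of $K$ and a symmetrization or direct-comparison argument, a strictly monotone function of the "width" of $\p$ as seen from that side (the farther the mass of $\p$ sits from a side, the smaller the average potential on it), so equality of all four side-averages forces the four sides to be equidistant from the centroid and of equal length. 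Putting the parallelogram structure from \eqref{eq:tilting} together with the equal-distance/equal-length conclusion from \eqref{eq:sliding} pins down the square. I would organize this as: (1) a lemma computing $\nabla v_\p$ and establishing the relevant monotonicity of $v_\p$ along a segment and of the side-average of $v_\p$ as a function of the polygon's geometry; (2) apply \eqref{eq:tilting} vertex by vertex to extract isosceles/reflection constraints; (3) apply \eqref{eq:sliding} to extract the equal-distance constraints; (4) solve the resulting finite system of geometric equations.

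The main obstacle I anticipate is step (1): proving that $v_\p$ restricted to a side is strictly monotone (or has a controlled critical-point structure) and, for the quadrilateral, that the side-average is a strictly monotone function of the appropriate geometric parameter. Unlike the ball case, where the moving-plane method gives the monotonicity of the potential for free, here I must work directly with the polygon and the general kernel $K$. The natural tool is the divergence theorem: $\nabla v_\p(x) = \int_\p \nabla_x K(|x-y|)\dd y = -\int_{\partial\p} K(|x-y|)\,\nu(y)\,\dd\Hone(y)$ after integrating by parts (this requires care with the singularity of $K$ at the origin, handled via \eqref{eq:assintegrable} and a cutoff/limiting argument), which expresses $\nabla v_\p$ on a side as a sum of contributions, one per edge, each a one-dimensional integral of $K$. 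Controlling the sign of the tangential component of this vector along the side — equivalently, showing the contributions from the non-adjacent edges dominate in the "expected" direction — is where the real work lies, and it is presumably the place where convexity of the polygon and the strict monotonicity \ref{ass:decreasing} of $K$ are essential. Once this monotonicity lemma is in hand, the remaining deductions from \eqref{eq:sliding} and \eqref{eq:tilting} should be elementary Euclidean geometry.
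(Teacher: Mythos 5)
The central step of your plan --- a lemma asserting that $v_\p$ restricted to a side is strictly monotone ``toward the fatter part'' of the triangle, to be proved by controlling the sign of the tangential derivative of $v_\p$ along the side --- is false and cannot be repaired in that form. The potential on a side of a triangle is in general not monotone: on the base of an isosceles triangle it is symmetric about the midpoint, and for strongly singular kernels $v_\p$ must decrease near each endpoint of a side (a vertex only ``sees'' an angular sector $\theta_i<\pi$ of the triangle, while an interior point of the side sees a full half-plane), so even for a scalene triangle $v_\p$ typically increases and then decreases along the side. What condition \eqref{eq:tilting} actually detects is the weaker \emph{reflected-point} comparison: if $\theta_1<\theta_2$ and $\tld{\p}$ denotes the reflection of $\p$ in the perpendicular bisector of $\side{P_1}{P_2}$, then for $x\in\side{M_1}{P_2}$ and its mirror image $\tld{x}\in\side{P_1}{M_1}$ one has $v_\p(\tld{x})=v_{\tld{\p}}(x)<v_\p(x)$, because $v_{\tld{\p}}(x)-v_\p(x)=\int_D\bigl(K(|\tld{x}-y|)-K(|x-y|)\bigr)\dd y$ with $D=\p\setminus\tld{\p}$ lying entirely on the same side of the axis as $x$, so that $|\tld{x}-y|>|x-y|$ there and \ref{ass:decreasing} gives the strict sign. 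Integrating this against the weight $|x-M_1|$ contradicts \eqref{eq:tilting} at once; no identity for $\nabla v_\p$ is needed, and I do not see how your proposed divergence-theorem control of the tangential component would ever produce a one-signed quantity along the whole side.

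For quadrilaterals the gap is larger. The reflection comparison above breaks down for a general quadrilateral, because $D=\p\setminus\tld{\p}$ is no longer confined to one side of the reflection axis and the distance inequality fails; the reflection trick only becomes available once one already knows the quadrilateral is a rhombus. Your plan assigns ``parallelogram/equal diagonals'' to \eqref{eq:tilting} alone and ``equal side lengths'' to \eqref{eq:sliding} alone via a monotonicity of the side-average of $v_\p$ in the ``width'', but neither implication is substantiated (the second is not even a well-defined statement), and the two conditions must in fact be used \emph{together} in the first stage. The working mechanism is variational: consider the area-preserving shear that translates each slice parallel to a diagonal with velocity proportional to its distance from that diagonal, compute the first variation of $\nl$ slice by slice, show it is strictly positive unless the two sides meeting at the sheared vertex have equal length, and observe that this first variation is a linear combination of the sliding and tilting first variations, hence must vanish under \eqref{eq:sliding} and \eqref{eq:tilting}. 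This forces the quadrilateral to be a rhombus, after which the reflection argument (now valid) shows it is equiangular, hence a square. So the proposal fails at its announced step (1), and for $N=4$ the decomposition of labor between the two overdetermined conditions does not reflect how they actually interact.
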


We also prove the analogous of Theorem~\ref{thm2} when we replace \eqref{eq:sliding} and \eqref{eq:tilting} by the corresponding stationarity conditions, with respect to the same two families of perturbations, under a \emph{perimeter} constraint, namely
\begin{equation}\label{eq:sliding2}
\int_{\side{P_i}{P_{i+1}}} v_{\p}(x)\dd\Hone(x) = \bar{\sigma} \bigl(\psi(\theta_i)+\psi(\theta_{i+1})\bigr)
\end{equation}
and
\begin{equation}\label{eq:tilting2}
\int_{\side{P_i}{M_i}} v_{\p}(x)|x-M_i|\dd\Hone(x) - \int_{\side{P_{i+1}}{M_i}} v_{\p}(x)|x-M_i|\dd\Hone(x) = \frac{\bar{\sigma}\ell_i}{2}\bigl(\psi(\theta_i)-\psi(\theta_{i+1})\bigr)
\end{equation}
where $\bar{\sigma}$ is a positive constant (independent of $i$), and
\begin{equation} \label{eq:cotangent}
\psi(\theta)\defeq \cot\theta+\frac{1}{\sin\theta}
\end{equation}
(see again Section~\ref{sec:overdetermined} for the derivation). We then have the following.
\begin{theorem}\label{thm3}
If $\p\in\pol_3$ obeys condition \eqref{eq:tilting2}, then $\p$ is an equilateral triangle. If $\p\in\pol_4$ obeys conditions \eqref{eq:sliding2} and \eqref{eq:tilting2}, then $\p$ is a square.
\end{theorem}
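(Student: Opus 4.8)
\emph{Strategy.} The plan is to follow the same scheme as for Theorem~\ref{thm2}, the only genuinely new input being the half-angle identity $\psi(\theta)=\cot(\theta/2)$, which in view of \eqref{eq:cotangent} shows that $\psi$ is strictly decreasing on $(0,\pi)$; this is exactly what makes the extra perimeter-derivative terms on the right-hand sides of \eqref{eq:sliding2}--\eqref{eq:tilting2} reinforce, rather than fight against, the comparisons used for \eqref{eq:sliding}--\eqref{eq:tilting}. I would split the argument into two steps: (i)~show that \eqref{eq:tilting2} forces all interior angles of $\p$ to be equal; (ii)~conclude. For $\p\in\pol_3$, step~(i) already gives $\theta_1=\theta_2=\theta_3=\pi/3$, i.e.\ the equilateral triangle. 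For $\p\in\pol_4$, step~(i) gives $\theta_1=\dots=\theta_4=\pi/2$, so up to a rigid motion $\p=[0,a]\times[0,b]$ is a rectangle, and a separate argument using \eqref{eq:sliding2} then forces $a=b$.

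\emph{Step (i): equal angles from \eqref{eq:tilting2}.} Fix $i$ and suppose for contradiction $\theta_i>\theta_{i+1}$ (the case $\theta_i<\theta_{i+1}$ being symmetric). Place $M_i$ at the origin with $\side{P_i}{P_{i+1}}$ on the horizontal axis and $\p$ in the upper half-plane, and let $\sigma$ be the reflection across the vertical axis, i.e.\ the perpendicular bisector of the side, so $\sigma$ maps $\side{P_i}{M_i}$ isometrically onto $\side{P_{i+1}}{M_i}$ fixing $M_i$. Writing $H=\{y:y_1<0\}$ for the half-plane containing $P_i$ and using $v_\p(\sigma x)=v_{\sigma\p}(x)$ together with the change of variables $y\mapsto\sigma y$, one obtains for $x\in\side{P_i}{M_i}$
\[
v_\p(x)-v_\p(\sigma x)=\int_{(\p\setminus\sigma\p)\cap H}g_x\dd y-\int_{(\sigma\p\setminus\p)\cap H}g_x\dd y,\qquad g_x(y)\defeq K(|x-y|)-K(|\sigma x-y|),
\]
where $g_x>0$ on $H$ whenever $x$ lies in the relative interior of $\side{P_i}{M_i}$, since $K$ is strictly decreasing and $|x-y|<|\sigma x-y|$ there. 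The geometric heart of the matter is the containment $\sigma\p\cap H\subseteq\p$ (when $\theta_i>\theta_{i+1}$), which makes the second integral vanish; granting it, $v_\p-v_\p\circ\sigma>0$ on the interior of $\side{P_i}{M_i}$, and after the change of variables this says precisely that the left-hand side of \eqref{eq:tilting2} is strictly positive. But the right-hand side of \eqref{eq:tilting2} equals $\tfrac{\bar\sigma\ell_i}{2}\bigl(\cot(\theta_i/2)-\cot(\theta_{i+1}/2)\bigr)<0$ because $\bar\sigma>0$ and $\theta_i>\theta_{i+1}$ — a contradiction. Hence $\theta_i=\theta_{i+1}$ for all $i$. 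For a triangle the containment is automatic: $\sigma\p\cap H$ is a triangle whose three vertices are $P_i$, $M_i$, and the point where $\partial\p$ meets the perpendicular bisector above the side, all of which lie in the convex set $\p$.

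\emph{Step (ii), quadrilateral case: from rectangle to square.} Let $\p=[0,a]\times[0,b]$. Since $\psi(\pi/2)=\cot(\pi/4)=1$, condition \eqref{eq:sliding2} reduces to $\int_{\side{P_i}{P_{i+1}}}v_\p\dd\Hone=2\bar\sigma$ for every $i$; in particular $I_a=I_b$, where $I_a$ and $I_b$ denote the integrals of $v_\p$ over a side of length $a$, respectively $b$. Using $v_\p(t,0)=\int_0^aF_b(|t-u|)\dd u$ with $F_q(\rho)\defeq\int_0^qK(\sqrt{\rho^2+w^2})\dd w$ (finite by assumption \ref{ass:integrable}) and the identity $\int_0^p\!\int_0^p h(|t-u|)\dd t\dd u=2\int_0^p(p-\rho)h(\rho)\dd\rho$, one finds $I_a=2\int_0^a(a-\rho)F_b(\rho)\dd\rho$ and $I_b=2\int_0^b(b-\rho)F_a(\rho)\dd\rho$. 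Assuming $a>b$ and splitting the first integral at $\rho=b$,
\[
I_a-I_b=2\int_0^b\Bigl[(a-b)F_b(\rho)-(b-\rho)\bigl(F_a(\rho)-F_b(\rho)\bigr)\Bigr]\dd\rho+2\int_b^a(a-\rho)F_b(\rho)\dd\rho,
\]
where the last term is strictly positive and the bracket is nonnegative: indeed $K$ decreasing gives $(a-b)F_b(\rho)\ge(a-b)\,b\,K(\sqrt{\rho^2+b^2})\ge(b-\rho)(a-b)K(\sqrt{\rho^2+b^2})\ge(b-\rho)\bigl(F_a(\rho)-F_b(\rho)\bigr)$. Thus $a>b$ forces $I_a>I_b$, and by symmetry $I_a=I_b$ forces $a=b$, so $\p$ is a square.

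\emph{Main obstacle.} The delicate point is the containment $\sigma\p\cap H\subseteq\p$ used in step~(i). For triangles it holds for free, as explained, but for quadrilaterals it can genuinely fail: a convex quadrilateral with $\theta_i>\theta_{i+1}$ may nevertheless carry more area on the $P_{i+1}$ side of the perpendicular bisector (for instance when the bulk of $\p$ lies toward the $P_{i+1}$ end), so the plain reflection argument does not by itself yield $\theta_i=\theta_{i+1}$. This is precisely where the quadrilateral case is harder — as it already is for Theorem~\ref{thm2} — and one expects to need a more careful, possibly combined, use of \eqref{eq:sliding2} and \eqref{eq:tilting2}, exploiting the small number of degrees of freedom of a quadrilateral; if $\pol_4$ is meant to include non-convex quadrilaterals, one must also reduce to the convex case first. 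Everything else (the monotonicity of $\psi$, the rectangle-to-square computation) is elementary, so once the underlying comparison lemma is available the perimeter-constrained statement costs no more than its area-constrained counterpart.
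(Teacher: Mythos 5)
Your triangle argument is correct and is essentially the paper's own proof: assume two adjacent angles differ, use the reflection across the perpendicular bisector of the common side to get a strict sign for the left-hand side of \eqref{eq:tilting2}, and contradict the sign of $\psi(\theta_i)-\psi(\theta_{i+1})$ coming from the monotonicity of $\psi$ (your identity $\psi(\theta)=\cot(\theta/2)$ is a clean way to see that monotonicity). The self-identified obstacle in the quadrilateral case, however, is a genuine gap, not a technicality: the containment $\sigma\p\cap H\subseteq\p$ really can fail for a convex quadrilateral with $\theta_i>\theta_{i+1}$, so the reflection comparison does not control the sign of the left-hand side of \eqref{eq:tilting2}, and your Step~(i) does not deliver equiangularity. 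Your Step~(ii) (rectangle $\Rightarrow$ square via $I_a-I_b>0$ when $a>b$) is a correct computation, but it is applied to a configuration you never validly reach.

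The paper resolves this by reversing the order of the two reductions. First it proves the quadrilateral is \emph{equilateral} (a rhombus), not equiangular, using the continuous-symmetrization/first-variation machinery of Section~\ref{sec:squares}: one slices $\p$ along the diagonal $\side{P_1}{P_3}$, applies the slicewise comparison of Lemma~\ref{lem:carrillo} with a suitable choice of the velocities $\beta^\pm$, and concludes that if $\alpha_2^->\alpha_2^+$ then $(\beta^+\ell_1\sin\alpha_2^-)I_2-(\beta^-\ell_3\sin\alpha_4^-)I_4>0$; on the other hand Proposition~\ref{prop:ricIvar2} converts \eqref{eq:sliding2}--\eqref{eq:tilting2} into $I_i=2\bar\sigma(\cos\alpha_i^--\cos\alpha_i^+)$, whose sign under the same assumption makes that combination nonpositive --- a contradiction. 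This forces $\theta_1=\theta_3$ and, repeating with the other diagonal, $\theta_2=\theta_4$, i.e.\ a rhombus. Only \emph{then} is the reflection argument invoked, across the perpendicular bisector of a side of the rhombus, where the containment you need is automatic (exactly as in the triangle case), and the resulting strict sign contradicts \eqref{eq:tilting2} unless $\theta_1=\theta_4$. So to close your proof you would need to import the entire diagonal-symmetrization argument (Lemma~\ref{lem:carrillo}, Propositions~\ref{prop:ricIvar} and \ref{prop:ricIvar2}, and the case analysis on the position of $P_4$) before any reflection step; the rectangle-to-square computation then becomes unnecessary, since the final reflection already yields the square from the rhombus.
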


These results can be interpreted as Serrin-type theorems yielding the characterization of the regular $N$-gon as the unique solution of the overdetermined problems \eqref{eq:potential}--\eqref{eq:sliding}--\eqref{eq:tilting}, or \eqref{eq:potential}--\eqref{eq:sliding2}--\eqref{eq:tilting2}, for the potential $v_\p$. Despite the large literature on overdetermined boundary value problems, symmetry results of this kind in a polygonal setting seem to have been considered only recently, with a first contribution by Fragalà and Velichkov \cite{FraVel19} which was also inspirational for our work. In \cite{FraVel19} it was proved that the overdetermined problem corresponding to the stationarity conditions for the torsional rigidity and for the first Dirichlet eigenvalue of the Laplacian, under an area or a perimeter constraint, characterizes the equilateral triangle among all triangles. We also mention the recent paper \cite{Sak21} for a related result, where equilateral triangles are characterized in terms of the position of the maximum point of the associated potential.

The proof of Theorem~\ref{thm2} in the case of triangles (see Section~\ref{sec:triangles}) is relatively simple and is based on a straightforward reflection argument inspired by \cite{FraVel19}. However, we also give a second proof which will be extended to the case of quadrilaterals in Section~\ref{sec:squares} (and, hopefully, might work in general for an arbitrary number of sides). This second argument is inspired by an idea of Carrillo, Hittmeir, Volzone, and Yao \cite{CaHiVoYa2016} and is based on a continuous symmetrization (in the spirit of the continuous Steiner symmetrization \cite{Bro95}), see also Figure~\ref{fig:ricIvar}. We show that, if two sides of a triangle have different lengths, then by translating the common vertex parallel to the third side the first variation of the energy is different from zero. In turn, since the criticality condition with respect to this variation can be expressed in terms of the conditions \eqref{eq:sliding} and \eqref{eq:tilting}, we obtain that all sides of a critical triangle have to be equal.

The proof for quadrilaterals exploits the same idea, and uses a continuous symmetrization to prove that the conditions \eqref{eq:sliding} and \eqref{eq:tilting} enforce the property of being equilateral, thus reducing the proof to the class of rhombi; then in a second step we prove that the polygon has to be also equiangular, using a reflection argument. The proof of Theorem~\ref{thm3} follows by the same arguments, with minor changes.

We conjecture that Theorem~\ref{thm2} and Theorem~\ref{thm3} should be true for every fixed number $N\geq3$ of sides, and that a possible strategy for the proof could follow the same ideas sketched above: one should first prove that the polygon is equilateral via continuous symmetrization, and then that it is equiangular via reflection. This strategy is somehow reminiscent of Zenodorus' classical proof of the isoperimetric property of the regular polygons \cite{Leo15}. Notice that a positive answer to this question would also provide an extension of Theorem~\ref{thm1} to the case $N\geq5$. However, the study of the sign of the first variation in the case $N\geq5$ is significantly more involved and seems to require new ideas. This will be the object of future work.

We also remark that, for $N=3$, every triangle satisfies the conditions \eqref{eq:sliding} and \eqref{eq:sliding2}, which therefore do not yield symmetry at all (see Remark~\ref{rmk:triangles}). However, in the case of quadrilaterals both \eqref{eq:sliding} and \eqref{eq:tilting} (or \eqref{eq:sliding2} and \eqref{eq:tilting2}) are required to characterize the square: indeed there exists quadrilaterals different from the square satisfying \eqref{eq:sliding} but not \eqref{eq:tilting} (e.g.\ rhombi), and quadrilaterals different from the square satisfying \eqref{eq:tilting} but not \eqref{eq:sliding} (e.g.\ rectangles).

Finally, we remark on the assumptions we made on the kernel $K$. The regularity assumption \ref{ass:regular} might be relaxed by considering only measurable and nonnegative kernels $K$ and approximating them by a sequence of $C^1$ functions. The assumption \ref{ass:decreasing} is used to obtain the strict monotonicity of the energy with respect to Steiner symmetrizations which, in turn, yields the uniqueness of the maximizer. This assumption is also used to show that certain perturbations of nonregular triangles and quadrilaterals strictly increase the energy in the first order.
The assumption \ref{ass:integrable} guarantees that the energy \eqref{eq:nonlocal_energy} is finite on sets with finite measure.

\medskip

We conclude this introduction by mentioning that our motivation for the study of this problem comes from our recent work \cite{BoCrTo20} on an anisotropic nonlocal isoperimetric problem, recently introduced in \cite{ChNeuTo20} as an extension of the classical liquid drop model of Gamow, in which we considered the volume-constrained minimization of the sum of the nonlocal energy $\nl$ and a crystalline anisotropic perimeter. Due to the presence of a surface tension whose Wulff shape (i.e. the corresponding isoperimetric region) is a convex polygon, it was shown that at least in the small mass regime minimizers of the total energy have a polygonal structure; this naturally led us to the question of characterizing the polygons which are stationary for the nonlocal energy $\nl$.

\medskip\noindent\textit{Structure of the paper.}
The proof of Theorem~\ref{thm1} is given in Section~\ref{sec:Steiner} via Steiner symmetrization. In Section~\ref{sec:overdetermined} we derive the identities \eqref{eq:sliding}, \eqref{eq:tilting}, \eqref{eq:sliding2} and \eqref{eq:tilting2} as stationarity conditions for the nonlocal energy with respect to two particular classes of variations. Finally, Section~\ref{sec:triangles} and Section~\ref{sec:squares} contain the proofs of Theorem~\ref{thm2} and Theorem~\ref{thm3} in the case $N=3$ and $N=4$, respectively.


\section{Maximality of equilateral triangles and squares by Steiner symmetrization}\label{sec:Steiner}

In this section we will give a proof to Theorem~\ref{thm1}. Our proof is based on Steiner symmetrization and a simple argument by P\'{o}lya and Szeg\H{o} which describes two sequences of symmetrizations transforming a given triangle into an equilateral triangle and a given quadrilateral into a square, respectively.

We start by giving the necessary definitions and prove two lemmas regarding the role of Steiner symmetrization on the nonlocal energy $\nl$: in particular, we show that the nonlocal energy is strictly increasing with respect to Steiner symmetrization of a set, unless the set is already symmetric. The \emph{strict} monotonicity of the energy, and the uniqueness of the maximizer, is in turn a consequence of assumption \ref{ass:decreasing}.
Since this monotonicity property is not restricted to dimension 2, in the first part of this section we work in general dimension $d\geq2$, and we replace assumption \ref{ass:integrable} on the kernel by its general version
\begin{equation} \label{ass:integrable_general}
\int_{0}^1 K(r)r^{d-1}\dd r <\infty.
\end{equation}
The proof is essentially contained in \cite[Chapter~3]{LiLo}, but we include the details here to point out the properties that we need. See also \cite{Bu} for details on rearrangement inequalities.

In the following, we denote by $e_1,\ldots,e_d$ the vectors of the canonical basis of $\R^d$. We also denote the generic point of $\R^d\equiv\R^{d-1}\times\R$ by $x=(x',x_d)$.

\begin{definition}\label{defn:symm_rearrange}
Given any measurable set $E \subset \R^d$, its \emph{symmetric rearrangement} is defined as $E^* \defeq B_r$ with $\omega_d r^d = |E|$, where $\omega_d$ denotes the volume of the unit ball in $\R^d$.
\end{definition}

\begin{definition}\label{defn:Steiner_set}
For $E\subset \R^d$ and $x^\pr \in \R^{d-1}$, let $E_{x^\pr} \defeq \bigl\{ x_d \in \R \colon (x^\pr,x_d) \in E \bigr\}$. The \emph{Steiner symmetrization} of $E$ in the direction $e_d$ is defined as
	\[
		E^s \defeq \Bigl\{ (x^\pr,x_d)\in\R^d \colon x^\pr \in \R^{d-1}, \, x_d \in (E_{x^\pr})^* \Bigr\}.
	\]
\end{definition}

Notice that the Steiner symmetrization is a volume preserving operation.

The first lemma shows that Steiner symmetrization of a set $E$ increases its nonlocal energy $\nl$, and it follows from Riesz's rearrangement inequality in one dimension (see \cite[Lemma~3.6]{LiLo}) and Fubini's theorem.

\begin{lemma}\label{lem:Steiner_symm}
Let $E\subset\R^d$ be a measurable set with finite measure. Then
	\[
		\nl(E) \leq \nl(E^s).
	\]
\end{lemma}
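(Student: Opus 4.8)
The plan is to reduce the $d$-dimensional statement to the one-dimensional Riesz rearrangement inequality by Fubini's theorem, exploiting the fact that Steiner symmetrization acts slicewise in the $e_d$ direction. First I would write the energy as an iterated integral: using the representation
\[
\nl(E) = \int_{\R^{d-1}}\int_{\R^{d-1}}\left(\int_{\R}\int_{\R} K\bigl(\sqrt{|x'-y'|^2 + |x_d - y_d|^2}\,\bigr)\dd x_d\dd y_d\right)\dd x'\dd y',
\]
where the inner double integral is over $x_d \in E_{x'}$ and $y_d \in E_{y'}$. For fixed $x', y' \in \R^{d-1}$, set $a \defeq |x'-y'|$ and define $K_a(t) \defeq K(\sqrt{a^2 + t^2})$ for $t \in \R$. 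By assumption \ref{ass:regular} and \ref{ass:decreasing}, $K_a$ is nonnegative and, as a function of $|t|$, nonincreasing; hence $K_a$ is a valid kernel for the one-dimensional Riesz inequality.

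The key step is then to apply the one-dimensional Riesz rearrangement inequality (in the form of \cite[Lemma~3.6]{LiLo}) to the three functions $\mathbbm{1}_{E_{x'}}$, $K_a$, $\mathbbm{1}_{E_{y'}}$, which gives
\[
\int_{\R}\int_{\R} \mathbbm{1}_{E_{x'}}(x_d)\, K_a(x_d - y_d)\, \mathbbm{1}_{E_{y'}}(y_d)\dd x_d\dd y_d \leq \int_{\R}\int_{\R} \mathbbm{1}_{(E_{x'})^*}(x_d)\, K_a(x_d - y_d)\, \mathbbm{1}_{(E_{y'})^*}(y_d)\dd x_d\dd y_d.
\]
Since $(E^s)_{x'} = (E_{x'})^*$ by Definition~\ref{defn:Steiner_set}, the right-hand side is exactly the inner double integral for $\nl(E^s)$ at the same pair $(x', y')$. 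Integrating this pointwise inequality over $(x', y') \in \R^{d-1}\times\R^{d-1}$ and applying Fubini once more yields $\nl(E) \leq \nl(E^s)$. One should check that Fubini applies, i.e. that all integrands are nonnegative and measurable, and that the integrals are finite — finiteness follows from $|E| < \infty$ together with the integrability condition \eqref{ass:integrable_general} (as noted in Remark~\ref{rmk:finiteness}), so there is no subtlety with signed cancellation.

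The main obstacle, such as it is, is bookkeeping rather than conceptual: one must verify carefully that $K_a$ satisfies the hypotheses of the one-dimensional rearrangement inequality even when $K$ is singular at the origin (if $a = 0$, i.e. $x' = y'$, then $K_a$ inherits the singularity of $K$ at $t = 0$, but this is a null set of pairs $(x',y')$ and in any case \cite[Lemma~3.6]{LiLo} is stated for such kernels), and that the symmetric decreasing rearrangement in one dimension commutes with slicing in the way asserted. Both are standard, and the whole argument is, as the text says, essentially contained in \cite[Chapter~3]{LiLo}; the value of spelling it out is to isolate exactly where \ref{ass:decreasing} (monotonicity of $K$, hence of $K_a$) enters, which is the property needed for the strict version in the next lemma.
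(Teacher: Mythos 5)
Your argument is correct, and the core idea is the same as the paper's: slice in the $e_d$ direction via Fubini--Tonelli and invoke the one-dimensional Riesz rearrangement inequality on each pair of slices. The technical route differs in one respect. You apply the one-dimensional inequality directly to the triple $\bigl(\chi_{E_{x'}},\,K_a,\,\chi_{E_{y'}}\bigr)$ with the general symmetric decreasing kernel $K_a(t)=K(\sqrt{a^2+t^2})$, which requires the full strength of \cite[Lemma~3.6]{LiLo} for arbitrary nonnegative rearranged middle factors (and, strictly speaking, the observation that $K_a^*=K_a$; if $K$ does not vanish at infinity one truncates or subtracts the limit first, a point both proofs gloss over equally). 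The paper instead first proves the purely set-theoretic inequality $\mathcal{I}(F,G,H)\leq\mathcal{I}(F^s,G^s,H^s)$ for characteristic functions only, and then reduces the general kernel to this case by the layer cake formula, writing $K(|x-y|)=\int_0^\infty \chi_{B_{r(t)}}(x-y)\dd t$ and using that balls are Steiner symmetric. The layer-cake route needs only the weakest (indicator) form of the one-dimensional inequality, at the cost of an extra integration in $t$; your route is more direct and is in fact exactly the slicing with the kernel $\ka(x'-y',\cdot)$ that the paper itself uses later in the proof of Lemma~\ref{lem:Steiner_uniq}, so it has the advantage of setting up the equality-case analysis with no change of framework. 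Your remark that assumption \ref{ass:decreasing} enters only through the monotonicity of $K_a$, and is what drives the strict version in the next lemma, is accurate.
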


\begin{proof}
We first prove the following property: given three measurable sets $F$, $G$, and $H \subset \Rd$ with finite measure, we have
\begin{equation} \label{proof:steiner}
\mathcal{I} (F,G,H) \leq \mathcal{I} (F^s,G^s,H^s),
\end{equation}
where $\mathcal{I}(F,G,H) \defeq \int_{\Rd}\!\int_{\Rd} \chi_F(x) \chi_G(x-y) \chi_H(y) \dd x \dd y$. Indeed, by Fubini's theorem
	\begin{align*}
		\mathcal{I}(F,G,H) &= \int_{\R^{d-1}}\!\int_{\R^{d-1}} \int_\R\!\int_\R \chi_F(x^\pr,x_d) \chi_G(x^\pr-y^\pr,x_d-y_d)\chi_H(y^\pr,y_d)\dd x_d \dd y_d \dd x^\pr \dd y^\pr \\
										 &= \int_{\R^{d-1}}\!\int_{\R^{d-1}} \int_\R\!\int_\R \chi_{F_{x^\pr}}(x_d) \chi_{G_{x^\pr-y^\pr}}(x_d-y_d)\chi_{H_{y^\pr}}(y_d)\dd x_d \dd y_d \dd x^\pr \dd y^\pr \\
										 &\leq \int_{\R^{d-1}}\!\int_{\R^{d-1}} \int_\R\!\int_\R \chi_{(F_{x^\pr})^*}(x_d) \chi_{(G_{x^\pr-y^\pr})^*}(x_d-y_d)\chi_{(H_{y^\pr})^*}(y_d)\dd x_d \dd y_d \dd x^\pr \dd y^\pr \\
										 &= \int_{\Rd}\!\int_{\Rd} \chi_{F^s}(x)\chi_{G^s}(x-y)\chi_{H^s}(y) \dd x \dd y = \mathcal{I}(F^s,G^s,H^s),
	\end{align*}
where the inequality follows from the one dimensional Riesz's rearrangement inequality.

Now, since the kernel $K$ is strictly decreasing, for any $t>0$ there exists $r(t)>0$ such that $\{x\in\Rd \colon K(|x|) > t\}=B_{r(t)}$. Using the layer cake formula (see \cite[Theorem~1.13]{LiLo}) and Fubini's theorem, we can rewrite the nonlocal energy as
	\begin{align*}
		\nl(E)  &= \int_{\Rd}\!\int_{\Rd} \chi_E(x) K(|x-y|) \chi_E(y) \dd x \dd y \\
				   &= \int_{\Rd}\!\int_{\Rd} \chi_E(x) \left(\int_0^\infty \chi_{\{K>t\}}(|x-y|) \dd t \right) \chi_E(y) \dd x \dd y \\
				   &= \int_0^\infty \left( \int_{\Rd}\!\int_{\Rd} \chi_E(x) \chi_{B_{r(t)}}(x-y) \chi_E(y) \dd x \dd y \right) \dd t.
	\end{align*}
Then \eqref{proof:steiner} implies that
	\[
		\int_{\Rd}\!\int_{\Rd} \chi_E(x) \chi_{B_{r(t)}}(x-y) \chi_E(y) \dd x \dd y \leq \int_{\Rd}\!\int_{\Rd} \chi_{E^s}(x) \chi_{B_{r(t)}}(x-y) \chi_{E^s}(y) \dd x \dd y.
	\]
Hence, rewriting the energy of $E^s$ using Fubini's theorem and the layer cake representation as above, we get $\nl(E) \leq \nl(E^s)$.
\end{proof}

\begin{remark} \label{rmk:finiteness}
Notice that for every measurable set $E\subset\R^d$ with finite measure, in view of the assumption \eqref{ass:integrable_general} and of the monotonicity of $K$, the potential $v_E$ defined in \eqref{eq:potential} is a bounded function:
\begin{align*}
v_E(x) & = \int_{E\cap B_1(x)}K(|x-y|)\dd y + \int_{E\setminus B_1(x)}K(|x-y|)\dd y \\
& \leq \int_{B_1}K(|y|)\dd y + K(1)|E\setminus B_1(x)| \\
& \leq d\omega_d\int_0^1 K(r)r^{d-1}\dd r + K(1)|E| =: C(d,K,|E|)<\infty.
\end{align*}
In turn, the energy of $E$ is finite: $\nl(E)=\int_E v_E(x)\dd x \leq C(d,K,|E|)|E|$.
\end{remark}

The next lemma shows that if a set and its Steiner symmetral have the same nonlocal energy, then they are translates of each other almost everywhere.

\begin{lemma}\label{lem:Steiner_uniq}
Let $E\subset\R^d$ be a measurable set with finite measure. Then $\nl(E)=\nl(E^s)$ only if $|E \triangle (E^s + y_0)|=0$ for some $y_0 \in \Rd$, where $\triangle$ denotes the symmetric difference of two sets.
\end{lemma}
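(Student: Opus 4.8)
The plan is to reduce the equality case to the equality case of the one-dimensional Riesz rearrangement inequality, using the \emph{strict} monotonicity of $K$ in an essential way. Write points of $\R^d$ as $x=(x',x_d)$ with $x'\in\R^{d-1}$, and for $z'\in\R^{d-1}$ set $\kappa_{z'}(z)\defeq K\bigl(\sqrt{|z'|^2+z^2}\,\bigr)$; by Fubini,
\[
\nl(E)=\int_{\R^{d-1}}\!\int_{\R^{d-1}}\mathcal{J}\bigl(E_{x'},E_{y'};x'-y'\bigr)\dd x'\dd y',\qquad \mathcal{J}(A,C;z')\defeq\int_\R\!\int_\R\chi_A(a)\,\kappa_{z'}(a-b)\,\chi_C(b)\dd a\dd b,
\]
and the same identity holds for $E^s$ with $E_{x'},E_{y'}$ replaced by their one-dimensional symmetric rearrangements, which by Definition~\ref{defn:Steiner_set} are precisely $(E^s)_{x'}=(E_{x'})^*$ and $(E^s)_{y'}=(E_{y'})^*$. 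For $z'\neq0$ the function $z\mapsto\kappa_{z'}(z)$ is bounded, even, and (since $K$ is strictly decreasing) strictly decreasing in $|z|$, hence equal to its own symmetric-decreasing rearrangement; so the one-dimensional Riesz inequality gives $\mathcal{J}(E_{x'},E_{y'};x'-y')\le\mathcal{J}((E_{x'})^*,(E_{y'})^*;x'-y')$ for a.e.\ $(x',y')$, and integrating recovers Lemma~\ref{lem:Steiner_symm}. Therefore, if $\nl(E)=\nl(E^s)$, equality must hold in this pointwise inequality for a.e.\ $(x',y')\in\R^{d-1}\times\R^{d-1}$.

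Now I would invoke the equality case of the one-dimensional Riesz inequality (see \cite[Chapter~3]{LiLo} and \cite{Bu}): if $A,C\subset\R$ have finite positive measure, $g\colon\R\to[0,\infty)$ is strictly symmetric-decreasing, and $0<\mathcal{J}(A,C;\cdot)=\mathcal{J}(A^*,C^*;\cdot)<\infty$, then there is a single $c\in\R$ with $|A\triangle(A^*+c)|=0$ and $|C\triangle(C^*+c)|=0$. To apply this, observe that a nonnegative strictly decreasing $K$ cannot vanish, so $K>0$ on $(0,\infty)$, whence $\kappa_{z'}>0$ and $\mathcal{J}(E_{x'},E_{y'};x'-y')>0$ whenever $|E_{x'}|,|E_{y'}|>0$; finiteness a.e.\ follows from $\nl(E^s)<\infty$ (Remark~\ref{rmk:finiteness}). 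Hence, writing $\Omega\defeq\{x'\in\R^{d-1}:|E_{x'}|>0\}$, for a.e.\ $(x',y')\in\Omega\times\Omega$ there is $c(x',y')\in\R$ with $E_{x'}=(E_{x'})^*+c(x',y')$ and $E_{y'}=(E_{y'})^*+c(x',y')$ up to null sets. In particular $E_{x'}$ is a.e.\ an interval, and since $(E_{x'})^*$ is centred at the origin, $c(x',y')$ must be its centre $c(x')$; symmetrically it equals $c(y')$, so $c(x')=c(y')$ for a.e.\ $(x',y')\in\Omega\times\Omega$.

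A Fubini argument then forces $c$ to be a.e.\ constant on $\Omega$, say $c\equiv c_0$ (indeed, for a.e.\ fixed $y'\in\Omega$ one has $c(x')=c(y')$ for a.e.\ $x'\in\Omega$). Consequently, for a.e.\ $x'\in\Omega$,
\[
E_{x'}=\bigl(c_0-\tfrac12|E_{x'}|,\,c_0+\tfrac12|E_{x'}|\bigr)=(E^s)_{x'}+c_0\quad\text{up to a null set,}
\]
while for $x'\notin\Omega$ both $E_{x'}$ and $(E^s)_{x'}=(E_{x'})^*$ are null. Slicing back and using $\bigl(E\triangle(E^s+c_0e_d)\bigr)_{x'}=E_{x'}\triangle\bigl((E^s)_{x'}+c_0\bigr)$, we obtain $|E\triangle(E^s+c_0e_d)|=\int_{\R^{d-1}}|E_{x'}\triangle((E^s)_{x'}+c_0)|\dd x'=0$, which is the assertion with $y_0=c_0e_d$.

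The main obstacle is the careful use of the one-dimensional equality case — in particular excluding the degenerate scenarios (null sets, infinite mass, a middle kernel that vanishes or has flat pieces), which is exactly where assumption \ref{ass:decreasing} is indispensable: it makes each $\kappa_{z'}$ strictly decreasing and forces $K>0$, so that no flat or trivial regimes occur. This is also why I slice against the kernel $K$ itself rather than first passing through the layer-cake decomposition of $\nl$ into quantities with middle function $\chi_{B_r}$, which is only non-strictly decreasing and would force one to handle additional degenerate ranges of parameters. A minor secondary point is the measurable selection of the centre map $x'\mapsto c(x')$ and the bookkeeping of the Fubini arguments when $\Omega$ has infinite measure.
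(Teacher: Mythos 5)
Your proposal is correct and follows essentially the same route as the paper's proof: a Fubini reduction to one-dimensional slices tested against the kernel $\ka(x'-y',\cdot)$ itself, the equality case of the one-dimensional Riesz rearrangement inequality (using strict monotonicity of $K$), and a Fubini argument to show the common centre is independent of the slice. Your treatment is somewhat more careful about the degenerate cases (null slices, positivity and finiteness of the sliced interactions, constancy of the centre map), which the paper handles more briskly, but the underlying argument is the same.
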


\begin{proof}
For dimension $d=1$, the result follows from \cite[Theorem~3.9]{LiLo}. Let $\ka(x) \defeq K(|x|)$ for $x\in\Rd$. For $d>1$, we note that, by Fubini's theorem, the equality $\nl(E) = \nl(E^s)$ is equivalent to
	\begin{multline} \nonumber
		\int_{\R^{d-1}}\! \int_{\R^{d-1}} \int_\R\!\int_\R \chi_E(x^\pr,x_d)\chi_E(y^\pr,y_d)\, \ka(x^\pr - y^\pr, x_d-y_d) \dd x_d \dd y_d \dd x^\pr \dd y^\pr \\ 
			= \int_{\R^{d-1}}\! \int_{\R^{d-1}} \int_\R\!\int_\R \chi_{E^s}(x^\pr,x_d)\chi_{E^s}(y^\pr,y_d)\, \ka(x^\pr - y^\pr, x_d-y_d) \dd x_d \dd y_d \dd x^\pr \dd y^\pr.
	\end{multline}

Since $\chi_{E^s}(x^\pr,x_d) = \chi_{(E_{x^\pr})^*}(x_d)$, defining
	\[
		\mathcal{I}^{1} \bigl( E_{x^\pr}, \ka(x^\pr-y^\pr, \,\cdot\,),E_{y^\pr} \bigr) \defeq \int_\R\!\int_\R \chi_{E_{x^\pr}}(x_d)\chi_{E_{y^\pr}}(y_d)\, \ka(x^\pr - y^\pr, x_d-y_d) \dd x_d \dd y_d 
	\]
the above equation becomes
	\begin{multline} \nonumber
		\int_{\R^{d-1}}\! \int_{\R^{d-1}} \mathcal{I}^{1} \bigl( E_{x^\pr}, \ka(x^\pr-y^\pr, \,\cdot\,),E_{y^\pr} \bigr) \dd x^\pr \dd y^\pr \\=  \int_{\R^{d-1}}\! \int_{\R^{d-1}} \mathcal{I}^{1} \bigl( (E_{x^\pr})^*, \ka(x^\pr-y^\pr, \,\cdot\,),(E_{y^\pr})^* \bigr) \dd x^\pr \dd y^\pr.
	\end{multline}
In turn, since by Riesz's rearrangement inequality (cf. \cite[Theorem~3.7]{LiLo})
	\[
		\mathcal{I}^{1} \bigl( E_{x^\pr}, \ka(x^\pr-y^\pr, \,\cdot\,),E_{y^\pr} \bigr) \leq \mathcal{I}^{1} \bigl( (E_{x^\pr})^*, \ka(x^\pr-y^\pr, \,\cdot\,),(E_{y^\pr})^* \bigr),
	\]
we get that
	\[
		\mathcal{I}^{1} \bigl( E_{x^\pr}, \ka(x^\pr-y^\pr, \,\cdot\,),E_{y^\pr} \bigr) = \mathcal{I}^{1} \bigl( (E_{x^\pr})^*, \ka(x^\pr-y^\pr, \,\cdot\,),(E_{y^\pr})^* \bigr),
	\]
for a.e. $x^\pr$, $y^\pr \in \R^{d-1}$. This implies, by the one dimensional result, that $E_{x^\pr}$ and $E_{y^\pr}$ are both intervals centered at the same point for a.e. $(x^\pr,y^\pr) \in \R^{d-1} \times \R^{d-1}$. Moreover, this point is independent of $(x^\pr,y^\pr)$ as we can repeat the argument for any $(x^\pr,\tilde{y}^\pr)$ with $\tilde{y}^{\pr}\in \R^{d-1}$ and obtain that the centers of $E_{x^\pr}$ and $E_{\tilde{y}^\pr}$ coincide. Therefore the set $E$, after possibly a translation in the $x_d$ direction, is Steiner symmetric up to a set of measure zero, i.e., $|E \triangle (E^s+y_0)|=0$ for some $y_0 \in \Rd$.
\end{proof}

We are now ready to prove our first main result which relies on an argument by P\'{o}lya and Szeg\H{o} that we detail here.

\begin{proof}[Proof of Theorem~\ref{thm1}]
Let $\mathcal{P}_0\in\pol_3$ be an arbitrary triangle with $|\p_0|=1$. Following \cite[Section~7.4]{PoSz51}, we will describe an infinite sequence of Steiner symmetrizations of $\mathcal{P}_0$ which will transform it into an equilateral triangle. To this end, let $2a_0$ be the length of one of the sides of $\mathcal{P}_0$. Then the corresponding altitude perpendicular to this side has length $a_0^{-1}$. By Steiner symmetrization of $\mathcal{P}_0$ in the direction of this side, we obtain an isosceles triangle $\mathcal{P}_1$ where the length of equal sides is $a_1=(a_0^2+a_0^{-2})^{1/2}$. Next we symmetrize $\mathcal{P}_1$ in the direction of one of the equal sides to obtain another isosceles triangle $\mathcal{P}_2$ with equal sides of length $a_2=(a_1^2/4+4/a_1^2)^{1/2}$. Repeating this process, we see that the length of the equal sides of the isosceles triangle $\mathcal{P}_n$ is given recursively by
$$
a_n = \sqrt{ \frac{a_{n-1}^2}{4} + \frac{4}{a_{n-1}^{2}}}
$$
for $n\geq2$. It can be checked that the sequence $(a_n^2)_n$ is a Cauchy sequence, by showing that $|a_{n+2}^2-a_{n+1}^2|/|a_{n+1}^2-a_n^2|\leq\frac34$; therefore, taking the limit $n\to\infty$ we see that $a_n\to 2/\sqrt[4]{3}$, and since in each iteration the area of $\p_n$ is one, in the limit, we obtain that all three sides are of length $2/\sqrt[4]{3}$.

Now, suppose $\mathcal{P}_0\in\pol_4$ is an arbitrary quadrilateral with $|\p_0|=1$. Symmetrizing $\mathcal{P}_0$ in the direction of one of its diagonals we obtain a kite, $\mathcal{P}_1$ (that is, a quadrilateral with a diagonal as axis of symmetry). If $\p_0$ is not convex, we symmetrize in the direction of its internal diagonal, so that in any case $\p_1$ is a convex quadrilateral. Next, we symmetrize $\mathcal{P}_1$ in the direction of its axis of symmetry and obtain a rhombus, $\mathcal{P}_2$. Let $a_2$ be the side length of $\p_2$. Symmetrizing $\mathcal{P}_2$ in the direction of one of its sides we get a rectangle $\mathcal{P}_3$ such that its longer side has length $a_3=a_2$. Symmetrizing $\mathcal{P}_3$ in the direction of one of its diagonals we obtain another rhombus, $\mathcal{P}_4$, with side length $a_4=\big( a_3^2/(a_3^4+1) + (a_3^4+1)/(4a_3^2) \big)^{1/2}$. Continuing this process we will obtain a sequence of quadrilaterals such that $\p_n$ is a rhombus for $n$ even, and a rectangle for $n$ odd. If $a_n$ denotes the side length of $\p_n$ ($n$ even) or the length of the longer side ($n$ odd), we have by construction
	\[
		a_{2n+1}=a_{2n}, \qquad a_{2n} = \sqrt{\frac{a_{2n-1}^2}{a_{2n-1}^4+1} + \frac{a_{2n-1}^4+1}{4a_{2n-1}^2}} \qquad \text{ for } n \geq 2,
	\]
recursively. Since by construction $a_n\geq1$, it can be checked that the sequence $(a_n)_n$ is monotone decreasing. Therefore, taking the limit $n\to\infty$ we get that $a_n \to 1$; hence, in the limit successive symmerizations of $\mathcal{P}_0$ yield a square.

Since, by Lemma~\ref{lem:Steiner_symm}, Steiner symmetrization increases the nonlocal energy $\nl$, we obtain that among the classes $\pol_3$ and $\pol_4$ an equilateral triangle and a square maximize $\nl$, respectively. The uniqueness of the maximizer in each class, up to rigid movements, follows from Lemma~\ref{lem:Steiner_uniq}.
\end{proof}


\section{Stationarity conditions: sliding and tilting}\label{sec:overdetermined}

We derive the stationarity conditions for the nonlocal energy \eqref{eq:nonlocal_energy} under an area or a perimeter constraint, with respect to two particular classes of perturbations of a polygon $\mathcal{P}\in\pol_N$, obtained by sliding one side parallel to itself, or tilting one side around its midpoint. In the following, we first assume that $\mathcal{P}\in\pol_N$ is a given \emph{convex} polygon with $N\geq3$ vertices $P_1,\ldots,P_N$. We choose to present the classes of perturbations and obtain stationarity conditions for convex polygons first in order to keep the presentation simple and the proofs clear. These classes extend easily to nonconvex polygons albeit the extension for tilting one side requires the introduction of new notation, and the \emph{same} stationary conditions are satisfied by a nonconvex $\mathcal{P}$. We present this extension to nonconvex polygons in a separate subsection. We consider the following two families of one-parameter deformations. 

\begin{definition}[Sliding of one side] \label{def:sliding}
Fix a side $\side{P_i}{P_{i+1}}$, $i\in\{1,\ldots,N\}$. For $t\in\R$ with $|t|$ sufficiently small, we define the polygon $\mathcal{P}_t\in\pol_N$ with vertices $P_1^t,\ldots,P_N^t$ obtained as follows (see Figure~\ref{fig:sliding}):
\begin{enumerate}[label = (\roman*)]
\item all vertices except $P_i$ and $P_{i+1}$ are fixed, i.e.\ $P_j^t\defeq P_j$ for all $j\in\{1,\ldots N\}\setminus\{i,i+1\}$;
\item the vertices $P_i^t$ and $P_{i+1}^t$ lie on the lines containing $\side{P_{i-1}}{P_i}$ and $\side{P_{i+1}}{P_{i+2}}$, respectively;
\item the side $\side{P_i^t}{P_{i+1}^t}$ is parallel to $\side{P_i}{P_{i+1}}$ and at a distance $|t|$ from $\side{P_i}{P_{i+1}}$, in the direction of $\nu_i$ if $t>0$ and in the direction of $-\nu_i$ if $t<0$.
\end{enumerate}
Explicitly:
\begin{equation*}
P_i^t\defeq P_i + \frac{t}{\sin\theta_i}\frac{P_i-P_{i-1}}{|P_i-P_{i-1}|}\,, \qquad P_{i+1}^t\defeq P_{i+1} + \frac{t}{\sin\theta_{i+1}}\frac{P_{i+1}-P_{i+2}}{|P_{i+1}-P_{i+2}|}\,.
\end{equation*}
\end{definition}

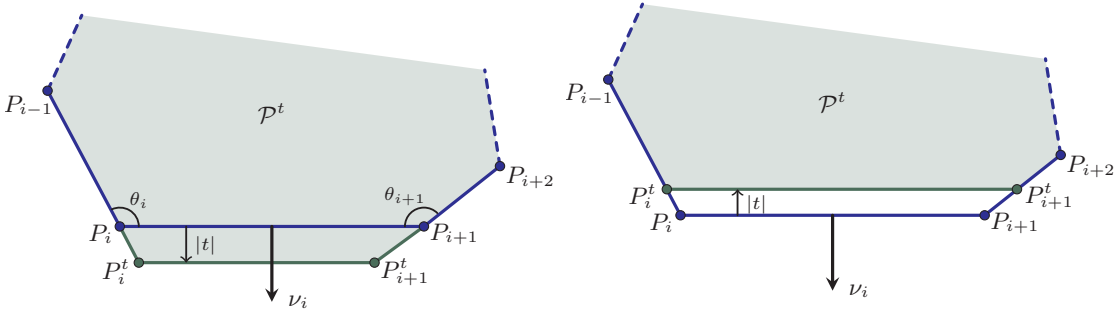
\begin{figure}[ht]
\definecolor{qqqqff}{rgb}{0,0,1}
\definecolor{qqwuqq}{rgb}{0,0.4,0}
\begin{tikzpicture}[scale=0.5,line cap=round,line join=round]
\clip(-7,-2.2) rectangle (7.5,6);
\fill[line width=0pt,fill=qqwuqq,fill opacity=0.2] (-3.5,-0.96) -- (2.7,-0.96) -- (6,1.6) -- (5.6,4.15) -- (-5,5.6) -- (-5.88,3.6) -- cycle;
\draw [line width=1.2pt,dash pattern=on 3pt off 3pt,color=qqqqff] (-5.9,3.6)-- (-5,5.6);
\draw [line width=1.2pt,dash pattern=on 3pt off 3pt,color=qqqqff] (6,1.6)-- (5.6,4.15);
\draw [line width=1.2pt,color=qqqqff] (-5.9,3.6)-- (-4,0);
\draw [line width=1.2pt,color=qqqqff] (-4,0)-- (4,0);
\draw [line width=1.2pt,color=qqqqff] (4,0)-- (6,1.6);
\draw [line width=1.2pt,color=qqwuqq] (-4,0)-- (-3.5,-0.96);
\draw [line width=1.2pt,color=qqwuqq] (4,0)-- (2.7,-0.96);
\draw [line width=1.2pt,color=qqwuqq] (2.7,-0.96)-- (-3.5,-0.96);
\draw [-stealth,line width=1.2pt] (0,0) -- (0,-2);
\draw [->,line width=0.7pt] (-2.253649257845337,0)-- (-2.253649257845337,-0.96);
\begin{footnotesize}
\draw [fill=qqqqff] (-4,0) circle (3.5pt);
\draw (-4.5,-0.2) node {$P_i$};
\draw [fill=qqqqff] (4,0) circle (3.5pt);
\draw (4.8,-0.2) node {$P_{i+1}$};
\draw [fill=qqqqff] (-5.9,3.6) circle (3.5pt);
\draw (-6.4,3.2) node {$P_{i-1}$};
\draw [fill=qqqqff] (6,1.6) circle (3.5pt);
\draw (6.8,1.3) node {$P_{i+2}$};
\draw [fill=qqwuqq] (-3.5,-0.96) circle (3.5pt);
\draw (-4.1,-1.2) node {$P_i^t$};
\draw [fill=qqwuqq] (2.7,-0.96) circle (3.5pt);
\draw (3.5,-1.2) node {$P_{i+1}^t$};
\draw[color=black] (0.7,-2) node {$\nu_i$};
\draw[color=black] (-1.75,-0.5) node {\tiny $|t|$};
\draw (0,3) node {$\p^t$};
\draw [-,line width=0.7pt] (-3.5,0) arc (0:115:0.5);
\draw (-3.5,0.7) node {\tiny $\theta_i$};
\draw [-,line width=0.7pt] (3.5,0) arc (180:45:0.5);
\draw (3.5,0.8) node {\tiny $\theta_{i+1}$};
\end{footnotesize}
\end{tikzpicture}
\begin{tikzpicture}[scale=0.5,line cap=round,line join=round]
\clip(-7,-2.5) rectangle (7.5,6);
\fill[line width=0pt,fill=qqwuqq,fill opacity=0.2] (-4.36,0.69) -- (4.85,0.69) -- (6,1.6) -- (5.6,4.15) -- (-5,5.6) -- (-5.88,3.6) -- cycle;
\draw [line width=1.2pt,dash pattern=on 3pt off 3pt,color=qqqqff] (-5.9,3.6)-- (-5,5.6);
\draw [line width=1.2pt,dash pattern=on 3pt off 3pt,color=qqqqff] (6,1.6)-- (5.6,4.15);
\draw [line width=1.2pt,color=qqqqff] (-5.9,3.6)-- (-4,0);
\draw [line width=1.2pt,color=qqqqff] (-4,0)-- (4,0);
\draw [line width=1.2pt,color=qqqqff] (4,0)-- (6,1.6);
\draw [line width=1.2pt,color=qqwuqq] (4.85,0.69)-- (-4.36,0.69);
\draw [-stealth,line width=1.2pt] (0,0) -- (0,-2);
\draw [->,line width=0.7pt] (-2.5,0)-- (-2.5,0.69);
\begin{footnotesize}
\draw [fill=qqqqff] (-4,0) circle (3.5pt);
\draw (-4.5,-0.2) node {$P_i$};
\draw [fill=qqqqff] (4,0) circle (3.5pt);
\draw (4.8,-0.2) node {$P_{i+1}$};
\draw [fill=qqqqff] (-5.9,3.6) circle (3.5pt);
\draw (-6.4,3.2) node {$P_{i-1}$};
\draw [fill=qqqqff] (6,1.6) circle (3.5pt);
\draw (6.8,1.3) node {$P_{i+2}$};
\draw [fill=qqwuqq] (-4.36,0.69) circle (3.5pt);
\draw (-5,0.5) node {$P_{i}^t$};
\draw [fill=qqwuqq] (4.85,0.69) circle (3.5pt);
\draw (5.7,0.5) node {$P_{i+1}^t$};
\draw[color=black] (0.7,-2) node {$\nu_i$};
\draw[color=black] (-2,0.3) node {\tiny $|t|$};
\draw (0,3) node {$\p^t$};
\end{footnotesize}
\end{tikzpicture}
\caption{A polygon $\p$ and its variation $\p^t$ (shaded region) as in Definition~\ref{def:sliding}, obtained by sliding the side $\side{P_i}{P_{i+1}}$ in the normal direction at a distance $|t|$: the case $t>0$ (left) and $t<0$ (right).}\label{fig:sliding}
\end{figure}

\begin{definition}[Tilting of one side] \label{def:tilting}
Fix a side $\side{P_i}{P_{i+1}}$, $i\in\{1,\ldots,N\}$. For $t\in\R$ with $|t|$ sufficiently small, we define the polygon $\mathcal{P}_t\in\pol_N$ with vertices $P_1^t,\ldots,P_N^t$ obtained as follows (see Figure~\ref{fig:tilting}):
\begin{enumerate}[label = (\roman*)]
\item all vertices except $P_i$ and $P_{i+1}$ are fixed, i.e.\ $P_j^t\defeq P_j$ for all $j\in\{1,\ldots N\}\setminus\{i,i+1\}$;
\item the vertices $P_i^t$ and $P_{i+1}^t$ lie on the lines containing $\side{P_{i-1}}{P_i}$ and $\side{P_{i+1}}{P_{i+2}}$, respectively;
\item the line containing $\side{P_i^t}{P_{i+1}^t}$ is obtained by rotating the line containing $\side{P_i}{P_{i+1}}$ around the midpoint $M_i$ of $\side{P_i}{P_{i+1}}$ by an angle $t$;
\item the direction of rotation is such that, for $t>0$, the point $P_{i+1}^t$ belongs to the segment $\side{P_{i+1}}{P_{i+2}}$, while for $t<0$ the point $P_i^t$ belongs to the segment $\side{P_{i-1}}{P_i}$.
\end{enumerate}
Explicitly:
\begin{equation*}
P_i^t\defeq P_i +\frac{\ell_i\sin t}{2\sin(\theta_i-t)}\frac{P_{i}-P_{i-1}}{|P_{i}-P_{i-1}|}\,, \qquad P_{i+1}^t\defeq P_{i+1}-\frac{\ell_i\sin t}{2\sin(\theta_{i+1}+t)}\frac{P_{i+1}-P_{i+2}}{|P_{i+1}-P_{i+2}|}\,.
\end{equation*}
\end{definition}

\begin{figure}[ht]
\definecolor{qqqqff}{rgb}{0,0,1}
\definecolor{qqwuqq}{rgb}{0,0.4,0}
\begin{tikzpicture}[scale=0.6,line cap=round,line join=round]
\clip(-7,-1.8) rectangle (7.5,6);
\fill[line width=0pt,fill=qqwuqq,fill opacity=0.2] (-3.45,-1.04) -- (5.28,1.58) -- (6,2.48) -- (5.57,4.17) -- (-5,5.67) -- (-5.94,3.66) -- cycle;
\draw [line width=1.2pt,dash pattern=on 3pt off 3pt,color=qqqqff] (-5.94,3.66)-- (-5,5.67);
\draw [line width=1.2pt,dash pattern=on 3pt off 3pt,color=qqqqff] (6,2.48)-- (5.57,4.17);
\draw [line width=1.2pt,color=qqqqff] (-5.94,3.66)-- (-4,0);
\draw [line width=1.2pt,color=qqqqff] (-4,0)-- (4,0);
\draw [line width=1.2pt,color=qqqqff] (4,0)-- (6,2.48);
\draw [line width=1.2pt,color=qqwuqq] (-4,0)-- (-3.449980437567673,-1.0361968117094946);
\draw [line width=1.2pt,color=qqwuqq] (-3.449980437567673,-1.0361968117094946)-- (5.2792724920825576,1.585622128425266);
\draw [line width=1.2pt,color=qqwuqq] (5.2792724920825576,1.585622128425266)-- (6,2.48);
\begin{footnotesize}
\draw [fill=qqqqff] (-4,0) circle (3.5pt);
\draw (-4.4,-0.2) node {$P_i$};
\draw [fill=qqqqff] (4,0) circle (3.5pt);
\draw (4.7,-0.2) node {$P_{i+1}$};
\draw [fill=qqqqff] (-5.94,3.66) circle (3.5pt);
\draw (-6.3,3.3) node {$P_{i-1}$};
\draw [fill=qqqqff] (6,2.48) circle (3.5pt);
\draw (6.7,2.4) node {$P_{i+2}$};
\draw [fill] (0,0) circle (3.5pt);
\draw (0.3,-0.4) node {$M_i$};
\draw [fill=qqwuqq] (5.28,1.58) circle (3.5pt);
\draw (5.9,1.4) node {$P_{i+1}^t$};
\draw [fill=qqwuqq] (-3.45,-1.04) circle (3.5pt);
\draw (-3.9,-1.4) node {$P_i^t$};
\draw (0,3) node {$\p^t$};
\draw [->,line width=0.7pt] (1.8,0) arc (0:17:1.8);
\draw (2.1,0.3) node {\tiny $t$};
\end{footnotesize}
\end{tikzpicture}
\caption{A polygon $\p$ and its variation $\p^t$ (shaded region) as in Definition~\ref{def:tilting}, obtained by tilting the side $\side{P_i}{P_{i+1}}$ around its midpoint $M_i$ by an angle $t>0$.}\label{fig:tilting}
\end{figure}
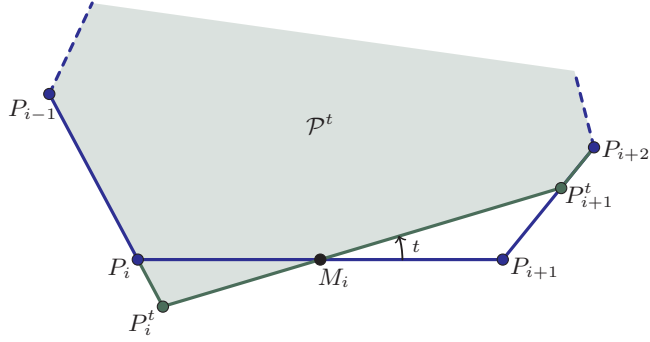

We next show in Theorem~\ref{thm:stationarity} that the equations \eqref{eq:sliding}-\eqref{eq:tilting} and \eqref{eq:sliding2}--\eqref{eq:tilting2} are the stationarity conditions for the nonlocal energy under an area or a perimeter constraint respectively, with respect to the variations considered in Definitions~\ref{def:sliding} and \ref{def:tilting}. To do so, we need to compute the first variation of the nonlocal energy \eqref{eq:nonlocal_energy}, of the area and of the perimeter of a polygon $\mathcal{P}$ with respect to these two classes of perturbations. The computation is based on the following first variation formula for the nonlocal energy with respect to a general perturbation. The derivation, which is valid in any dimension $d\geq2$, replacing the assumption \ref{ass:integrable} by \eqref{ass:integrable_general}, is relatively standard and is presented in the Appendix~\ref{sec:Appendix} for completeness.

\begin{proposition}[First variation of $\nl$] \label{prop:firstvar}
Let $E\subset\R^2$ be a bounded open set with piecewise smooth boundary. Let $\Phi:\R^2\times[-\bar{t},\bar{t}]\to\R^2$, for $\bar{t}>0$, be a flow of class $C^2$ such that $\Phi(x,0)=x$. Then
\begin{equation} \label{eq:firstvar}
\frac{\dd}{\dd t}\Big|_{t=0} \nl(\Phi_t(E)) = 2\int_{\partial E}v_E(x)\,X(x)\cdot\nu_E(x)\dd\Hone(x),
\end{equation}
where $X(x)\defeq\frac{\partial\Phi(x,t)}{\partial t}|_{t=0}$ is the initial velocity, $v_E$ is the potential of $E$ defined in \eqref{eq:potential}, and $\nu_E$ is the exterior unit normal on $\partial E$.
\end{proposition}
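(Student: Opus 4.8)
The plan is to pull the energy back to the fixed domain $E$ via the change of variables induced by the flow, differentiate under the integral, recognise a total divergence, and apply the divergence theorem. Write $\Phi_t\defeq\Phi(\cdot,t)$ and $J_t(\xi)\defeq\det D\Phi_t(\xi)$; since $\Phi_0=\id$ and $\Phi\in C^2$, for $|t|$ small $\Phi_t$ is a diffeomorphism of a neighbourhood of $\overline E$ onto its image with $J_t>0$, so that, substituting $x=\Phi_t(\xi)$, $y=\Phi_t(\eta)$,
\[
\nl(\Phi_t(E))=\int_E\int_E K\bigl(|\Phi_t(\xi)-\Phi_t(\eta)|\bigr)\,J_t(\xi)J_t(\eta)\dd\xi\dd\eta .
\]
Differentiating at $t=0$ (granted, for now, that this is legitimate --- see below), and using $J_0\equiv1$, Jacobi's formula $\partial_tJ_t|_{t=0}=\dive X$, and $\partial_t|_{t=0}K(|\Phi_t(\xi)-\Phi_t(\eta)|)=\nabla_\xi K(|\xi-\eta|)\cdot X(\xi)+\nabla_\eta K(|\xi-\eta|)\cdot X(\eta)$, the $t$-derivative of the integrand at $t=0$ is
\[
\dive_\xi\!\bigl(K(|\xi-\eta|)X(\xi)\bigr)+\dive_\eta\!\bigl(K(|\xi-\eta|)X(\eta)\bigr)=\dive_{(\xi,\eta)}\Psi(\xi,\eta),
\]
the full divergence on $\R^2\times\R^2$ of the vector field $\Psi(\xi,\eta)\defeq K(|\xi-\eta|)\,(X(\xi),X(\eta))$.

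It then remains to integrate this total divergence over $E\times E$. By the divergence theorem --- whose boundary, up to an $\Hone$-null set, is $(\partial E\times\overline E)\cup(\overline E\times\partial E)$, with outer unit normals $(\nu_E(\xi),0)$ and $(0,\nu_E(\eta))$ respectively --- together with Fubini,
\[
\int_{E\times E}\dive_{(\xi,\eta)}\Psi=\int_{\partial E}\Bigl(\int_E K(|\xi-\eta|)\dd\eta\Bigr)X(\xi)\cdot\nu_E(\xi)\dd\Hone(\xi)+\int_{\partial E}\Bigl(\int_E K(|\xi-\eta|)\dd\xi\Bigr)X(\eta)\cdot\nu_E(\eta)\dd\Hone(\eta),
\]
and recognising each inner integral as the potential $v_E$ yields $2\int_{\partial E}v_E\,X\cdot\nu_E\dd\Hone$, which is \eqref{eq:firstvar}.

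The genuinely non-routine point, and the main obstacle, is the singularity of $K$ at the origin: $\Psi$ blows up on the diagonal $\{\xi=\eta\}$, and $|K'|$ need not even be integrable against $r\dd r$ near $0$ (already for $K(r)=r^{-1}$), so the term $\nabla_\xi K(|\xi-\eta|)\cdot X(\xi)$ must be handled through its odd-symmetry cancellation $\int_{\partial B_\varepsilon(\eta)}\tfrac{\xi-\eta}{|\xi-\eta|}\dd\Hone=0$ rather than absolutely --- which affects both the differentiation under the integral sign and the divergence theorem. The clean way to make everything rigorous is to run the whole argument first for the truncated kernels $K_n(r)\defeq K(r+\tfrac1n)$, which are bounded and $C^1$ on $[0,\infty)$ and for which every step above is classical, obtaining $\frac{\dd}{\dd t}\big|_{t=0}\nl_n(\Phi_t(E))=2\int_{\partial E}v_E^{\,n}\,X\cdot\nu_E\dd\Hone$ with $v_E^{\,n}(x)\defeq\int_E K_n(|x-y|)\dd y$; one then lets $n\to\infty$, using monotone convergence ($K_n\nearrow K$, since $K$ is decreasing) for $\nl_n(\Phi_t(E))\to\nl(\Phi_t(E))$ and the bound of Remark~\ref{rmk:finiteness} for the uniform (in $t\in[-\bar t,\bar t]$) convergence $v_E^{\,n}\to v_E$ and of the first variations, which forces $\nl(\Phi_t(E))$ to be differentiable with the stated derivative. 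Alternatively, keeping $K$, one excises an $\varepsilon$-neighbourhood of the diagonal from $E\times E$ before invoking the divergence theorem: the extra boundary flux over $\{|\xi-\eta|=\varepsilon\}$ is $O(\varepsilon K(\varepsilon))$ a priori and $O(\varepsilon^2 K(\varepsilon))$ after the symmetry cancellation, hence vanishes as $\varepsilon\to0$ since $\varepsilon^2 K(\varepsilon)\le C\int_{\varepsilon/2}^{\varepsilon}K(r)\,r\dd r\to0$ by \eqref{ass:integrable_general}. Everything else is routine bookkeeping with Fubini and the divergence theorem.
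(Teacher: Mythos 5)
Your argument is correct and is essentially the proof given in the paper's appendix: the same pull-back to $E\times E$ by the flow, the same regularization $K_\delta(r)=K(r+\delta)$ of the kernel, and the same passage to the limit via uniform-in-$t$ convergence of the first variations. The only cosmetic difference is that you package the integration by parts as one application of the divergence theorem on the product domain $E\times E\subset\R^2\times\R^2$, whereas the paper symmetrizes in $x\leftrightarrow y$ and integrates by parts in $x$ alone; these are the same computation.
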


\begin{definition}[Stationarity] \label{def:stationarity}
Let $\p\in\pol_N$ and let $\{\p_t\}_t$ be a one-parameter deformation of $\p$, such as those considered before. We define an area-preserving variation and a perimeter-preserving variation, rescaling $\p_t$ by
\begin{equation} \label{eq:stationarya}
\mathcal{Q}_t \defeq \lambda_t\p_t \qquad\text{where }\lambda_t\defeq\biggl(\frac{|\p|}{|\p_t|}\biggr)^{\frac12},
\end{equation}
\begin{equation} \label{eq:stationaryb}
\mathcal{R}_t \defeq \mu_t\p_t \qquad\text{where }\mu_t\defeq \frac{\per(\p)}{\per(\p_t)},
\end{equation}
respectively, so that $|\mathcal{Q}_t|=|\p|$ and $\per(\mathcal{R}_t)=\per(\p)$ for all $t$. We say that $\p$ is \emph{stationary with respect to the variation $\{\p_t\}_t$ under area constraint} if
\begin{equation} \label{eq:stationaryc}
\frac{\dd}{\dd t}\Big|_{t=0} \nl(\mathcal{Q}_t)  = 0,
\end{equation}
and that $\p$ is \emph{stationary with respect to the variation $\{\p_t\}_t$ under perimeter constraint} if
\begin{equation} \label{eq:stationaryd}
\frac{\dd}{\dd t}\Big|_{t=0} \nl(\mathcal{R}_t)  = 0.
\end{equation}
\end{definition}

\begin{theorem}[Stationarity conditions] \label{thm:stationarity}
A polygon $\p\in\pol_N$ is stationary with respect to the sliding variation as in Definition~\ref{def:sliding} on the $i$-th side, for $i\in\{1,\ldots,N\}$,
\begin{enumerate}
\item under area constraint if and only if
\begin{equation}
\frac{1}{\ell_i}\int_{\side{P_i}{P_{i+1}}} v_{\p}(x)\dd\Hone(x) = \frac{\sigma}{2|\p|},
\end{equation}
\item under perimeter constraint if and only if
\begin{equation}
\int_{\side{P_i}{P_{i+1}}} v_{\p}(x)\dd\Hone(x) = \frac{\sigma}{\per(\p)}\bigl(\psi(\theta_i)+\psi(\theta_{i+1})\bigr).
\end{equation}
\end{enumerate}
A polygon $\p\in\pol_N$ is stationary with respect to the tilting variation as in Definition~\ref{def:tilting} on the $i$-th side, for $i\in\{1,\ldots,N\}$,
\begin{enumerate}
\item under area constraint if and only if
\begin{equation}
\int_{\side{P_i}{M_i}} v_{\p}(x)|x-M_i|\dd\Hone(x) = \int_{\side{P_{i+1}}{M_i}} v_{\p}(x)|x-M_i|\dd\Hone(x),
\end{equation}
\item under perimeter constraint if and only if
\begin{multline}
\int_{\side{P_i}{M_i}} v_{\p}(x)|x-M_i|\dd\Hone(x) - \int_{\side{P_{i+1}}{M_i}} v_{\p}(x)|x-M_i|\dd\Hone(x) \\
= \frac{\sigma\ell_i}{2\per(\p)}\bigl(\psi(\theta_i)-\psi(\theta_{i+1})\bigr).
\end{multline}
\end{enumerate}
In the previous equations the constant $\sigma$ is independent of $i$ and defined as
\begin{equation} \label{lagrangemult}
\sigma \defeq \int_{\partial \p}v_{\p}(x)\,x\cdot\nu_\p(x)\dd\Hone(x).
\end{equation}
\end{theorem}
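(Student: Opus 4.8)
The plan is to apply the first variation formula \eqref{eq:firstvar} of Proposition~\ref{prop:firstvar} to $C^2$ flows realizing the rescaled deformations $\mathcal{Q}_t$ and $\mathcal{R}_t$ of Definition~\ref{def:stationarity}, and then to extract the four identities by evaluating, for the sliding and the tilting deformation, three elementary quantities: the normal velocity $X\cdot\nu_\p$ along $\partial\p$, the first variation of area $\int_{\partial\p}X\cdot\nu_\p\dd\Hone$, and the first variation of perimeter $\frac{\dd}{\dd t}\big|_{t=0}\per(\p_t)$.

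First I would record the mechanism that converts an unconstrained first variation into a constrained one. If $\{\p_t\}$ is a deformation realized by a $C^2$ flow $\Phi_t$ with $\Phi_0=\id$ and initial velocity $X$, then $\mathcal{Q}_t=\lambda_t\p_t$ is realized by the $C^2$ flow $x\mapsto\lambda_t\Phi_t(x)$, whose initial velocity is $X+\dot\lambda(0)\,x$ with $\dot\lambda(0)=-\tfrac{1}{2|\p|}\int_{\partial\p}X\cdot\nu_\p\dd\Hone$ (from the first variation of area). Substituting this velocity into \eqref{eq:firstvar} and recalling the definition \eqref{lagrangemult} of $\sigma$,
\[
\frac{\dd}{\dd t}\Big|_{t=0}\nl(\mathcal{Q}_t)=2\int_{\partial\p}v_\p\,X\cdot\nu_\p\dd\Hone(x)-\frac{\sigma}{|\p|}\int_{\partial\p}X\cdot\nu_\p\dd\Hone(x),
\]
and likewise, with $\mathcal{R}_t=\mu_t\p_t$ and $\dot\mu(0)=-\tfrac{1}{\per(\p)}\frac{\dd}{\dd t}\big|_{t=0}\per(\p_t)$,
\[
\frac{\dd}{\dd t}\Big|_{t=0}\nl(\mathcal{R}_t)=2\int_{\partial\p}v_\p\,X\cdot\nu_\p\dd\Hone(x)-\frac{2\sigma}{\per(\p)}\frac{\dd}{\dd t}\Big|_{t=0}\per(\p_t).
\]
The constant $\sigma$ is manifestly independent of $i$ and of the chosen deformation, and stationarity under the area (resp.\ perimeter) constraint is the vanishing of the first (resp.\ second) right-hand side.

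It then remains to insert the three ingredients for the two families. For the sliding of the $i$-th side one may take the realizing flow to be supported near that side, so that it moves only $\side{P_{i-1}}{P_i}$, $\side{P_i}{P_{i+1}}$ and $\side{P_{i+1}}{P_{i+2}}$; the normal velocity is the constant $1$ on $\side{P_i}{P_{i+1}}$ and vanishes on the other two, since by construction these remain on the fixed lines through $P_{i-1}P_i$ and $P_{i+1}P_{i+2}$ so that the flow is tangential there. This makes the two adjacent sides invisible to both boundary integrals, giving $\int_{\partial\p}v_\p\,X\cdot\nu_\p\dd\Hone=\int_{\side{P_i}{P_{i+1}}}v_\p\dd\Hone$ and $\int_{\partial\p}X\cdot\nu_\p\dd\Hone=\ell_i$; differentiating the side lengths $|P_{j+1}^t-P_j^t|$ at $t=0$ from the explicit formulas of Definition~\ref{def:sliding} gives, after a short computation, $\frac{\dd}{\dd t}\big|_{t=0}\per(\p_t)=\psi(\theta_i)+\psi(\theta_{i+1})$ with $\psi$ as in \eqref{eq:cotangent}. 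For the tilting of the $i$-th side only $\side{P_i}{P_{i+1}}$ carries a nonzero normal velocity, equal to $-s$ when the side is parametrized by signed arclength $s\in[-\ell_i/2,\ell_i/2]$ from the pivot $M_i$ (the velocity of the point at signed distance $s$ from $M_i$ under the rotation is normal to the side, of magnitude $|s|$); splitting the integral at $M_i$ identifies $\int_{\partial\p}v_\p\,X\cdot\nu_\p\dd\Hone$ with $\int_{\side{P_i}{M_i}}v_\p|x-M_i|\dd\Hone-\int_{\side{P_{i+1}}{M_i}}v_\p|x-M_i|\dd\Hone$, while $\int_{\partial\p}X\cdot\nu_\p\dd\Hone=\int_{-\ell_i/2}^{\ell_i/2}(-s)\dd s=0$ (tilting about the midpoint is area-preserving to first order); differentiating the side lengths from Definition~\ref{def:tilting} gives $\frac{\dd}{\dd t}\big|_{t=0}\per(\p_t)=\tfrac{\ell_i}{2}\bigl(\psi(\theta_i)-\psi(\theta_{i+1})\bigr)$. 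Substituting these values into the two master formulas above (and dividing by $\ell_i$ in the area-constrained sliding case) produces exactly the four stated identities.

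I expect the only points requiring care to be: on the one hand, the construction of an admissible $C^2$ flow realizing each deformation — but since \eqref{eq:firstvar} depends on the flow only through $X\cdot\nu_\p$ on $\partial\p$, it is enough to extend the explicit vertex velocities of Definitions~\ref{def:sliding}--\ref{def:tilting} to a $C^2$ vector field supported near the $i$-th side and take its flow, which is routine; and on the other hand the consistent bookkeeping of signs (the sense of the sliding and of the rotation, matching Definitions~\ref{def:sliding}--\ref{def:tilting}) in the elementary trigonometry that produces the combinations $\psi(\theta_i)\pm\psi(\theta_{i+1})$ — this is the main, though entirely computational, part. The case of nonconvex polygons is covered by the same argument once the definitions of the perturbations are adapted as indicated in the text.
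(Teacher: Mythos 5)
Your proposal is correct and follows essentially the same route as the paper: compose the deformation with the rescaling to get the constrained first variation $2\int_{\partial\p}v_\p\,X\cdot\nu_\p\dd\Hone+2\dot\sigma(0)\sigma$, then insert the normal velocities of the sliding and tilting flows together with the first variations of area and perimeter. The only (minor) difference is in handling the non-smoothness of the tilting flow at $M_i$ — you propose replacing it by a genuinely $C^2$ field with the same normal velocity, while the paper uses a cut-off approximation near $M_i$ — but both devices are routine and lead to the same computation.
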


\begin{proof}
Let $\{\Phi_t\}_t$ be a flow such that $\p_t=\Phi_t(\p)$ and let $X(x)\defeq\frac{\partial\Phi(x,t)}{\partial t}|_{t=0}$ be the initial velocity. We compose the flow $\{\Phi_t\}_t$ with a rescaling which restores the area or the perimeter constraint: more precisely, we define
$$
\Psi(x,t) \defeq \sigma_t\Phi(x,t)
$$
where $\sigma_t$ is either equal to $\lambda_t$ (defined in \eqref{eq:stationarya}) or to $\mu_t$ (defined in \eqref{eq:stationaryb}). Notice that $\Psi_t(\p)=\sigma_t\Phi_t(\p)=\sigma_t\p_t$, therefore $\Psi_t(\p)=\mathcal{Q}_t$ if $\sigma_t=\lambda_t$, and $\Psi_t(\p)=\mathcal{R}_t$ if $\sigma_t=\mu_t$. The initial velocity of the flow $\{\Psi_t\}_t$ is given by
\begin{equation*}
Y(x) \defeq\frac{\partial\Psi(x,t)}{\partial t}\Big|_{t=0} = X(x) + \frac{\dd \sigma_t}{\dd t}\Big|_{t=0}x .
\end{equation*}
Then by Proposition~\ref{prop:firstvar} we obtain
\begin{equation*}
\begin{split}
\frac{\dd}{\dd t}\Big|_{t=0} \nl(\Psi_t(\p))
& = 2\int_{\partial \p}v_{\p}(x)\,Y(x)\cdot\nu_\p(x)\dd\Hone(x) \\
& = 2\int_{\partial \p}v_{\p}(x)\,X(x)\cdot\nu_\p(x)\dd\Hone(x) + 2\,\frac{\dd \sigma_t}{\dd t}\Big|_{t=0}\int_{\partial \p}v_{\p}(x)\,x\cdot\nu_\p(x)\dd\Hone(x) \\
& = \frac{\dd}{\dd t}\Big|_{t=0} \nl(\p_t) + 2\,\frac{\dd \sigma_t}{\dd t}\Big|_{t=0} \int_{\partial \p}v_{\p}(x)\,x\cdot\nu_\p(x)\dd\Hone(x).
\end{split}
\end{equation*}
Therefore the stationarity conditions \eqref{eq:stationaryc} and \eqref{eq:stationaryd} with respect to the perturbation $\{\p_t\}_t$, under area or perimeter constraint, are equivalent to
\begin{equation} \label{proof:stationary1}
\frac{\dd}{\dd t}\Big|_{t=0} \nl(\p_t) = -2\sigma \frac{\dd \lambda_t}{\dd t}\Big|_{t=0} = \frac{\sigma}{|\p|}\frac{\dd}{\dd t}\Big|_{t=0} |\mathcal{P}_t|
\end{equation}
and
\begin{equation} \label{proof:stationary2}
\frac{\dd}{\dd t}\Big|_{t=0} \nl(\p_t) = -2\sigma \frac{\dd \mu_t}{\dd t}\Big|_{t=0} = \frac{2\sigma}{\per(\p)}\frac{\dd}{\dd t}\Big|_{t=0} \per(\p_t)
\end{equation}
respectively. Thus, to obtain the conditions in the statement, it is now sufficient to insert 
in \eqref{proof:stationary1} and \eqref{proof:stationary2} the first variation formulas for the nonlocal energy, the area, and the perimeter of a polygon with respect to the sliding and tilting variations, that we now compute.

\medskip
\noindent\emph{Sliding first variation.} The flow $\{\Phi_t\}_t$ which induces the perturbation as in Definition~\ref{def:sliding}, obtained by sliding the side $\side{P_i}{P_{i+1}}$ parallel to itself, obeys $(\Phi_t(x)-x)\cdot\nu_i=t$ for all $x\in\side{P_i}{P_{i+1}}$; therefore its initial velocity has normal component
\begin{equation*}
X\cdot\nu_i = 1 \qquad\text{on }\side{P_i}{P_{i+1}}
\end{equation*} 
and $X\cdot\nu_j=0$ for all $j\neq i$. Hence, by Proposition~\ref{prop:firstvar} we obtain
\begin{equation} \label{firstvar_sliding1}
\frac{\dd}{\dd t}\Big|_{t=0} \nl(\mathcal{P}_t) = 2\int_{\side{P_i}{P_{i+1}}}v_{\p}(x)\dd\Hone(x).
\end{equation}
The first variations of the area and of the perimeter are computed in \cite[Lemma~2.7]{FraVel19} in the case of a triangle, but the proof is obviously the same for a general polygon, and follows, via elementary geometric arguments, from the identities
$$
|\p_t| = |\p| + \ell_i t + o(t), \qquad \per(\p_t)= \per(\p) +t\bigl(\psi(\theta_i)+\psi(\theta_{i+1})\bigr)
$$
as $t\to0$, where $\psi$ is the function defined in \eqref{eq:cotangent}. Therefore, we obtain
\begin{equation} \label{firstvar_sliding2}
\frac{\dd}{\dd t}\Big|_{t=0} |\mathcal{P}_t| = \ell_i, \qquad \frac{\dd}{\dd t}\Big|_{t=0} \per(\p_t) = \psi(\theta_i) + \psi(\theta_{i+1}).
\end{equation}

\medskip
\noindent\emph{Tilting first variation.} 
We can explicitly write a flow $\{\Phi_t\}_t$ which induces the perturbation  as in Definition~\ref{def:tilting}, obtained by tilting the side $\side{P_i}{P_{i+1}}$ with respect to its midpoint $M_i$.  On the side $\side{P_i}{P_{i+1}}$ it is given by
\begin{equation*}
\Phi_t(x) =
\begin{cases}
\displaystyle x - \frac{\sin t}{\sin(\theta_i-t)}|x-M_i|\tau_i & \text{if }x\in\side{P_i}{M_i}, \\[10pt]
\displaystyle x + \frac{\sin t}{\sin(\theta_{i+1}+t)}|x-M_i|\tau_{i+1} & \text{if }x\in\side{M_i}{P_{i+1}},
\end{cases}
\end{equation*}
where $\tau_i=\frac{1}{\ell_{i-1}}(P_{i-1}-P_i)$ and $\tau_{i+1}=\frac{1}{\ell_{i+1}}(P_{i+2}-P_{i+1})$ are the unit vectors parallel to the sides $\side{P_{i-1}}{P_i}$ and $\side{P_{i+1}}{P_{i+2}}$, respectively.
Then the normal component of the initial velocity is
\begin{equation*}
X(x)\cdot\nu_i =
\begin{cases}
\displaystyle -\frac{|x-M_i|}{\sin\theta_{i}} \, \tau_i\cdot\nu_i = |x-M_i| & \text{if }x\in\side{P_i}{M_i}, \\[10pt]
\displaystyle \frac{|x-M_i|}{\sin\theta_{i+1}} \, \tau_{i+1}\cdot\nu_i  = - |x-M_i| & \text{if }x\in\side{M_i}{P_{i+1}}
\end{cases}
\end{equation*}
(and $X(x)\cdot\nu_j$ for $x\in\side{P_j}{P_{j+1}}$, $j\neq i$). By applying Proposition~\ref{prop:firstvar} we obtain
\begin{equation} \label{firstvar_tilting1}
\frac{\dd}{\dd t}\Big|_{t=0} \nl(\mathcal{P}_t) = 2\int_{\side{P_i}{M_i}}v_{\p}(x)|x-M_i|\dd\Hone(x)-2\int_{\side{M_i}{P_{i+1}}}v_{\p}(x)|x-M_i|\dd\Hone(x).
\end{equation}

Notice that the flow $\{\Phi_t\}_t$ does not satisfy the regularity assumption in Proposition \ref{prop:firstvar}
since there is a singularity at the point $M_i$.
We briefly describe how to deal with this issue. Take a smooth cut-off function $\varphi:[0,\infty)\to[0,1]$
with $\varphi\equiv0$ in $[0,1/2]$ and $\varphi\equiv1$ in $[1,\infty)$ and, for $\delta>0$, consider
the flow
\[
\Phi^\delta_t(x):=x+ \varphi\left(\frac{|x-M_i|^2}{\delta^2}\right)(\Phi_t(x)-x)
\]
and let $X^\delta$ be its initial velocity. Set $\mathcal{P}^\delta_t := \Phi^\delta_t(\mathcal{P})$.
Then it is easy to see that, for $t\ll 1$,
\[
\| X -X^\delta \|_{L^1(\side{P_i}{P_{i+1}})}\leq C \delta,\quad\quad\quad
| \nl(\mathcal{P}^\delta_t) - \nl(\mathcal{P}_t) | \leq C \delta t
\]
which allows us to obtain the desired result by using Proposition 3.3 for the regular flow $\{\Phi^\delta_t\}_t$
and by sending $\delta\to0$.

The first variations of the area and of the perimeter are computed in \cite[Lemma~2.6]{FraVel19} in the case of a triangle, but the proof is again the same for a general polygon, and follows from the identities
$$
|\p_t| = |\p| + o(t), \qquad \per(\p_t) = \per(\p) - \ell_i + \frac{\ell_i}{2}\biggl(\frac{\sin\theta_{i+1}-\sin t}{\sin(\theta_{i+1}+t)} + \frac{\sin\theta_i+\sin t}{\sin(\theta_i-t)}\biggr),
$$
as $t\to0$. Therefore we obtain
\begin{equation} \label{firstvar_tilting2}
\frac{\dd}{\dd t}\Big|_{t=0} |\mathcal{P}_t| = 0, \qquad \frac{\dd}{\dd t}\Big|_{t=0} \per(\p_t) = \frac{\ell_i}{2}\bigl(\psi(\theta_i)-\psi(\theta_{i+1})\bigr).
\end{equation}

Finally, the formulas in the statement follow by inserting \eqref{firstvar_sliding1}, \eqref{firstvar_sliding2}, \eqref{firstvar_tilting1}, and \eqref{firstvar_tilting2} into \eqref{proof:stationary1} and \eqref{proof:stationary2}.
\end{proof}

\subsection{Extension to nonconvex polygons} \label{subsec:nonconvex_case}

Suppose $\p\in\pol_N$ is a nonconvex polygon with $N\geq3$ vertices $P_1,\ldots,P_N$ ordered counter-clockwise. We note that Definition \ref{def:sliding} corresponding to sliding of one side works exactly as it is written in the nonconvex case, leading to the exact same first variation computations \eqref{firstvar_sliding1}-\eqref{firstvar_sliding2}. Moreover, the stationarity conditions corresponding to sliding first variation in the first part of Theorem \ref{thm:stationarity} hold verbatim in the case when $\p$ is nonconvex. 

The only changes we need to introduce are related to tilting one side of $\p$ around its midpoint. To this end, in addition to the notation presented in the Introduction, we define $\tld{\theta}_i \defeq \theta_i \mod \pi$, where $\theta_i$ denotes the (interior) angle at the vertex $P_i$. This means, $\tld{\theta}_i=\theta_i$ if $P_i$ is a convex vertex of $\p$ and $\tld{\theta}_i = \theta_i - \pi$ if $P_i$ is a concave vertex. In either case, $\tld{\theta}_i < \pi$. We modify Definition \ref{def:tilting} by explicitly defining the vertices $P_1^t,\ldots,P_N^t$ of the perturbed polygon $\p_t \in \pol_N$ as follows (see Figure \ref{fig:tilting_nonconvex}):
	\begin{equation*}
P_i^t\defeq P_i +\frac{\ell_i\sin t}{2\sin(\tld{\theta}_i-t)}\,\tld{\tau}_i \,, \qquad P_{i+1}^t\defeq P_{i+1}-\frac{\ell_i\sin t}{2\sin(\tld{\theta}_{i+1}+t)}\,\tld{\tau}_{i+1}\,,
	\end{equation*}
where
	\begin{equation*}
\tld{\tau}_i \defeq \begin{cases}
\displaystyle \frac{P_{i-1}-P_i}{|P_i-P_{i-1}|} & \text{if }\theta_i>\pi, \\[8pt]
\displaystyle \frac{P_i-P_{i-1}}{|P_i-P_{i-1}|} & \text{if }\theta_i<\pi,
\end{cases}   \qquad \tld{\tau}_{i+1} \defeq \begin{cases}
\displaystyle \frac{P_{i+2}-P_{i+1}}{|P_{i+1}-P_{i+2}|} & \text{if }\theta_{i+1}>\pi, \\[8pt]
\displaystyle \frac{P_{i+1}-P_{i+2}}{|P_{i+1}-P_{i+2}|} & \text{if }\theta_{i+1}<\pi.
\end{cases}
	\end{equation*}
	
\begin{figure}[ht]
\definecolor{qqqqff}{rgb}{0,0,1}
\definecolor{qqwuqq}{rgb}{0,0.4,0}
\begin{tikzpicture}[scale=0.8,line cap=round,line join=round]
\clip(-3.5,-4.5) rectangle (8.2,3.5);
\fill[line width=0pt,fill=qqwuqq,fill opacity=0.2] (-2,2.3) -- (-3,0) -- (-1,-3) -- (-0.4,-1.2) -- (6.64286,1.28572) -- (7,2) -- (6.5,3) -- cycle;
\draw [line width=1.2pt,dash pattern=on 3pt off 3pt,color=qqqqff] (-3,0)-- (-2,2.3);
\draw [line width=1.2pt,dash pattern=on 3pt off 3pt,color=qqqqff] (7,2)-- (6.5,3);
\draw [line width=1.2pt,color=qqqqff] (-3,0) -- (-1,-3) -- (0,0) -- (6,0) -- (7,2);
\draw [line width=1.2pt,color=qqwuqq] (-1,-3) -- (-0.4,-1.2) -- (6.64286,1.28572) -- (7,2);
\draw [line width=0.8pt,dash pattern=on 3pt off 3pt,color=qqwuqq] (0,0) -- (0.6,1.8);
\draw [line width=0.8pt,dash pattern=on 3pt off 3pt,color=qqwuqq] (6,0) -- (5,-2);
\begin{footnotesize}
\draw [fill=qqqqff] (0,0) circle (3.5pt);
\draw (-0.4,0.2) node {$P_i$};
\draw [fill=qqqqff] (6,0) circle (3.5pt);
\draw (6.5,-0.4) node {$P_{i+1}$};
\draw [fill=qqqqff] (-1,-3) circle (3.5pt);
\draw (-1,-3.5) node {$P_{i-1}$};
\draw [fill=qqqqff] (7,2) circle (3.5pt);
\draw (7.65,2) node {$P_{i+2}$};
\draw [fill] (3,0) circle (3.5pt);
\draw (3.3,-0.4) node {$M_i$};
\draw [fill=qqwuqq] (6.64286,1.28572) circle (3.5pt);
\draw (7.4,1.2) node {$P_{i+1}^t$};
\draw [fill=qqwuqq] (-0.4,-1.2) circle (3.5pt);
\draw (-0.9,-1) node {$P_i^t$};
\draw (2.2,1.9) node {$\p^t$};
\draw [->,line width=0.7pt] (4.2,0) arc (0:17:1.4);
\draw (4,0.2) node {\tiny $t$};
\draw [-,line width=0.7pt] (0.5,0) arc (0:43:1);
\draw (0.7,0.42) node {\tiny $\tld{\theta}_i$};
\end{footnotesize}
\end{tikzpicture}
\caption{A nonconvex polygon $\p$ and its variation $\p^t$ (shaded region) obtained by tilting the side $\side{P_i}{P_{i+1}}$ around its midpoint $M_i$ by an angle $t>0$.}\label{fig:tilting_nonconvex}
\end{figure}
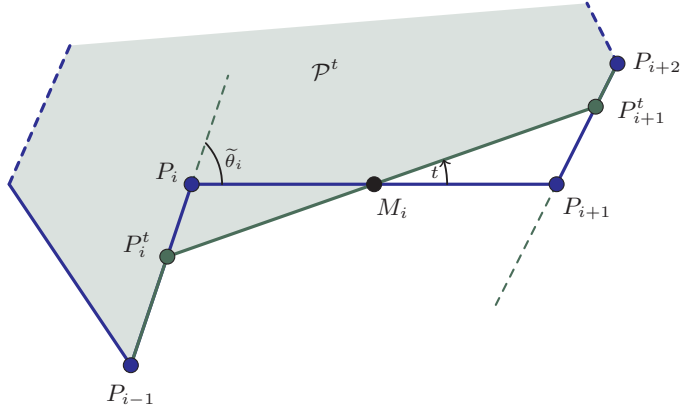
	
Using this perturbation, the calculation carried out in the second part of the proof of Theorem~\ref{thm:stationarity} applies verbatim, and we obtain the exact same stationarity conditions corresponding to tilting first variations. Therefore the conditions \eqref{eq:sliding}, \eqref{eq:tilting}, \eqref{eq:sliding2} and \eqref{eq:tilting2} hold when $\p$ is nonconvex.

\subsection{Another family of volume-preserving variations}
In this subsection we introduce a third family of area-preserving perturbations of a general polygon. We compute the corresponding first variation of the nonlocal energy and we show that it can be written as a combination of the sliding and tilting first variations.

\begin{definition} \label{def:ricIvar}
Fix three consecutive vertices $P_{i-1}$, $P_i$, $P_{i+1}$, $i\in\{1,\ldots,N\}$, of the polygon $\p$. For $t\in\R$ with $|t|$ sufficiently small, we define the polygon $\mathcal{P}_t\in\pol_N$ with vertices $P_1^t,\ldots,P_N^t$ obtained as follows (see Figure~\ref{fig:ricIvar}):
\begin{enumerate}[label = (\roman*)]
\item all vertices except $P_i$ are fixed, i.e.\ $P_j^t\defeq P_j$ for all $j\in\{1,\ldots N\}\setminus\{i\}$;
\item the vertex $P_i^t$ is given by
$$
P_i^t = P_i + t\,\frac{P_{i+1}-P_{i-1}}{|P_{i+1}-P_{i-1}|},
$$
that is, $P_i^t$ lies on the line through $P_i$ parallel to the diagonal $\side{P_{i-1}}{P_{i+1}}$, at a distance $|t|$ from $P_i$.
\end{enumerate}
\end{definition}

\begin{figure}[ht]
\definecolor{qqqqff}{rgb}{0,0,1}
\definecolor{qqwuqq}{rgb}{0,0.4,0}
\begin{tikzpicture}[scale=0.7,line cap=round,line join=round]
\clip(-6,-3) rectangle (6,5);
\fill[line width=0pt,fill=qqwuqq,fill opacity=0.2] (-2.622,-2.756) -- (-4,0) -- (-2,3.96) -- (4,0) -- (4.54,-2.76) -- cycle;
\draw [line width=1.4pt,color=qqqqff] (-4,0)-- (-3.37,3.96);
\draw [line width=1.4pt,color=qqqqff] (-3.37,3.96)-- (4,0);
\draw [line width=1.4pt,color=qqqqff] (4,0)-- (4.39,-2);
\draw [line width=1.4pt,color=qqqqff] (-4,0)-- (-3,-2);
\draw [line width=1.4pt,dash pattern=on 3pt off 3pt,color=qqqqff] (-3,-2)-- (-2.622,-2.756);
\draw [line width=1.4pt,dash pattern=on 3pt off 3pt,color=qqqqff] (4.39,-2)-- (4.54,-2.76);
\draw [line width=1.4pt,dash pattern=on 3pt off 3pt,color=qqwuqq] (-4,0)-- (-2,3.96);
\draw [line width=1.4pt,dash pattern=on 3pt off 3pt,color=qqwuqq] (-2,3.96)-- (4,0);
\draw [line width=1pt,dash pattern=on 3pt off 3pt] (-4,0)-- (4,0);
\begin{footnotesize}
\draw [fill=qqqqff] (-4,0) circle (3pt);
\draw (-4.5,-0.2) node {$P_{i-1}$};
\draw [fill=qqqqff] (4,0) circle (3pt);
\draw (4.6,-0.2) node {$P_{i+1}$};
\draw [fill=qqqqff] (-3.37,3.96) circle (3pt);
\draw (-3.5,4.25) node {$P_i$};
\draw [fill=qqwuqq] (-2,3.96) circle (3pt);
\draw (-1.7,4.25) node {$P_i^t$};
\draw (-2.6,4.2) node {$t$};
\end{footnotesize}
\draw [->,line width=1pt] (-3.37,3.96)-- (-2,3.96);
\draw [-,line width=0.7pt] (-3.3,0) arc (0:80:0.7);
\draw (-3,0.5) node {\tiny $\alpha_{i}^-$};
\draw [-,line width=0.7pt] (3.1,0) arc (180:150:0.9);
\draw (2.7,0.25) node {\tiny $\alpha_{i}^+$};
\end{tikzpicture}
\caption{A polygon $\p$ and its variation $\p^t$ (shaded region) as in Definition~\ref{def:ricIvar}, obtained by moving the vertex $P_i$ parallel to the diagonal $\side{P_{i-1}}{P_{i+1}}$ at a distance $t>0$.}\label{fig:ricIvar}
\end{figure}
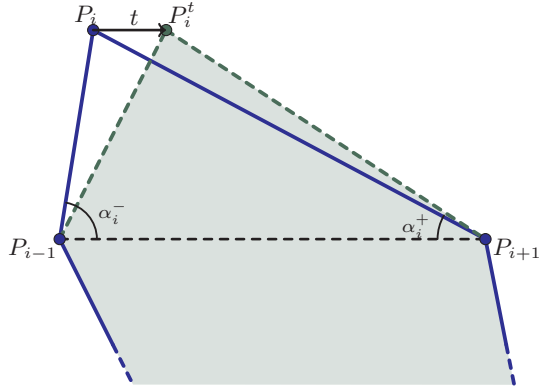

In the rest of this subsection we work under the assumption that the angle $\theta_i$ at the vertex $P_i$ is smaller than $\pi$, as we will use the variation in Definition \ref{def:ricIvar} only for a convex vertex.

\begin{proposition} \label{prop:ricIvar}
Let $\mathcal{P}\in\pol_N$ and let $\{\mathcal{P}_t\}_t$ be the family of perturbations of $\mathcal{P}$ as in Definition~\ref{def:ricIvar}, obtained by moving the vertex $P_i$ parallel to the diagonal $\side{P_{i-1}}{P_{i+1}}$. Then
\begin{equation} \label{firstvar_ric}
\begin{split}
I_i \defeq \frac{\dd}{\dd t}\Big|_{t=0} \nl(\mathcal{P}_t)
& = \frac{2\sin \alpha_{i}^+}{\ell_{i}}\int_{\side{P_{i}}{P_{i+1}}}v_{\p}(x)|x-P_{i+1}|\dd\Hone(x) \\
& \qquad - \frac{2\sin\alpha_{i}^-}{\ell_{i-1}}\int_{\side{P_{i-1}}{P_i}}v_{\p}(x)|x-P_{i-1}|\dd\Hone(x),
\end{split}
\end{equation}
\begin{equation} \label{firstvar_ric2}
\frac{\dd}{\dd t}\Big|_{t=0} |\mathcal{P}_t| = 0, \qquad \frac{\dd}{\dd t}\Big|_{t=0} \per({\p}_t) = \cos\alpha_{i}^--\cos\alpha_{i}^+.
\end{equation}
where $\alpha_{i}^-\in(0,\pi)$ is the angle between $\side{P_{i-1}}{P_{i+1}}$ and $\side{P_{i-1}}{P_i}$, and $\alpha_{i}^+\in(0,\pi)$ is the angle between $\side{P_{i-1}}{P_{i+1}}$ and $\side{P_i}{P_{i+1}}$.
\end{proposition}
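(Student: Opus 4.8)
The plan is to derive the first variation formulas in Proposition~\ref{prop:ricIvar} by exhibiting an explicit flow $\{\Phi_t\}_t$ realizing the perturbation of Definition~\ref{def:ricIvar}, then applying Proposition~\ref{prop:firstvar} and doing the elementary geometry of the perturbed triangles $P_{i-1}P_iP_i^t$ and $P_iP_{i+1}P_i^t$. Only the two sides $\side{P_{i-1}}{P_i}$ and $\side{P_i}{P_{i+1}}$ are moved, so the boundary integral in \eqref{eq:firstvar} reduces to contributions from these two segments; all I need is the normal component $X\cdot\nu_{i-1}$ on $\side{P_{i-1}}{P_i}$ and $X\cdot\nu_i$ on $\side{P_i}{P_{i+1}}$. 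Write $d\defeq\frac{P_{i+1}-P_{i-1}}{|P_{i+1}-P_{i-1}|}$, so that $P_i^t=P_i+td$ and the vertex moves with velocity $d$, while $P_{i-1}$, $P_{i+1}$ stay fixed.

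First I would construct the flow. On the triangle with vertices $P_{i-1}$, $P_i$, $P_{i+1}$ one can take $\Phi_t$ to be the (piecewise) affine map that fixes $P_{i-1}$ and $P_{i+1}$ and sends $P_i\mapsto P_i+td$, linearly interpolated on each of the two triangles $P_{i-1}P_iM'$-type regions — concretely, on $\side{P_{i-1}}{P_i}$ write $x=P_{i-1}+s(P_i-P_{i-1})$, $s\in[0,1]$, and set $\Phi_t(x)=P_{i-1}+s(P_i+td-P_{i-1})$, so $X(x)=sd=\frac{|x-P_{i-1}|}{\ell_{i-1}}\,d$; symmetrically on $\side{P_i}{P_{i+1}}$ one gets $X(x)=\frac{|x-P_{i+1}|}{\ell_i}\,d$. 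Extend $\Phi_t$ by the identity outside a neighborhood of $P_i$ not meeting the other sides (possible for $|t|$ small since $\theta_i<\pi$ and $P_i$ is a genuine vertex). Then $X\cdot\nu_{i-1}=\frac{|x-P_{i-1}|}{\ell_{i-1}}\,d\cdot\nu_{i-1}$ and $X\cdot\nu_i=\frac{|x-P_{i+1}|}{\ell_i}\,d\cdot\nu_i$. The scalar $d\cdot\nu_i$ equals $\pm\sin$ of the angle between the diagonal direction $d$ and the side $\side{P_i}{P_{i+1}}$, i.e.\ $d\cdot\nu_i=\sin\alpha_i^+$ (with the sign bookkeeping arranged by the counter-clockwise orientation so that increasing $t$ pushes $\side{P_i}{P_{i+1}}$ outward), and similarly $d\cdot\nu_{i-1}=-\sin\alpha_i^-$ since moving $P_i$ along $d$ pushes $\side{P_{i-1}}{P_i}$ inward. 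Plugging these into \eqref{eq:firstvar} of Proposition~\ref{prop:firstvar} (the factor $2$ comes from there) yields exactly \eqref{firstvar_ric}.

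For \eqref{firstvar_ric2}, the area statement is immediate: $P_i^t$ moves parallel to the diagonal $\side{P_{i-1}}{P_{i+1}}$, so the triangle $P_{i-1}P_iP_{i+1}$ keeps constant base $|P_{i+1}-P_{i-1}|$ and constant height (the distance from the line through $P_i$ parallel to the diagonal to the diagonal itself is unchanged), hence $|\p_t|=|\p|$ for all small $t$ and the derivative vanishes; no Taylor expansion is even needed. For the perimeter, only $\ell_{i-1}(t)=|P_i+td-P_{i-1}|$ and $\ell_i(t)=|P_{i+1}-P_i-td|$ change, and $\frac{d}{dt}\big|_{t=0}|P_i+td-P_{i-1}|=d\cdot\frac{P_i-P_{i-1}}{\ell_{i-1}}=-\cos\alpha_i^-$ (wait — sign: the derivative of $|a+td|$ at $t=0$ is $d\cdot a/|a|$; with $a=P_i-P_{i-1}$ this is $\cos$ of the angle between $d$ and $\overrightarrow{P_{i-1}P_i}$, which is $\pi-\alpha_i^-$ measured appropriately, giving $-\cos\alpha_i^-$... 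I will fix the exact sign by the orientation convention so that the stated $\cos\alpha_i^--\cos\alpha_i^+$ results). Likewise $\frac{d}{dt}\big|_{t=0}\ell_i(t)$ contributes $-\cos\alpha_i^+$ with the opposite convention; adding gives $\frac{d}{dt}\big|_{t=0}\per(\p_t)=\cos\alpha_i^--\cos\alpha_i^+$.

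The main obstacle is purely the sign/orientation bookkeeping: one must pin down, using the counter-clockwise vertex ordering, that the exterior normals $\nu_{i-1}$, $\nu_i$ pair with the diagonal direction $d$ to produce $-\sin\alpha_i^-$ and $+\sin\alpha_i^+$ respectively (and the analogous cosine signs for the perimeter), and that the interpolating flow genuinely moves the two sides outward/inward as claimed without affecting neighboring sides for $|t|$ small. The non-smoothness of the piecewise-affine flow at $P_i$ (and along an interior segment used to interpolate) is handled exactly as in the tilting case in the proof of Theorem~\ref{thm:stationarity}: one regularizes by a cut-off $\varphi(|x-P_i|^2/\delta^2)$, applies Proposition~\ref{prop:firstvar} to the smooth flow, and lets $\delta\to0$, the error being $O(\delta t)$ because $v_\p$ is bounded (Remark~\ref{rmk:finiteness}) and the perturbed region near $P_i$ has area $O(\delta t)$. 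Everything else is the elementary trigonometry of two triangles sharing the diagonal $\side{P_{i-1}}{P_{i+1}}$.
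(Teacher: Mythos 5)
Your proposal is correct and follows essentially the same route as the paper: the same interpolating flow with velocity $X(x)=\frac{|x-P_{i-1}|}{\ell_{i-1}}\,\tau$ on $\side{P_{i-1}}{P_i}$ and $\frac{|x-P_{i+1}|}{\ell_i}\,\tau$ on $\side{P_i}{P_{i+1}}$, the same normal components $\mp\sin\alpha_i^{\mp}$ fed into Proposition~\ref{prop:firstvar}, the same observation that the variation is exactly area-preserving, and the same side-length differentiation for the perimeter (the paper expands $\sqrt{\ell_{i-1}^2+2t\ell_{i-1}\cos\alpha_i^-+t^2}$ etc., which is your computation; note the correct sign is $+\cos\alpha_i^-$ for $\frac{\dd}{\dd t}\ell_{i-1}(t)\big|_{t=0}$, so your final formula is right once the hedging is removed).
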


\begin{proof}
A flow $\{\Phi_t\}_t$ which induces the perturbation $\{\p_t\}_t$ is explicitly given by
\begin{equation} \label{eq:ricflow}
\Phi_t(x) =
\begin{cases}
\displaystyle x + t \, \frac{|x-P_{i-1}|}{\ell_{i-1}} \, \tau & \text{if }x\in\side{P_{i-1}}{P_i}, \\[10pt]
\displaystyle x + t \, \frac{|x-P_{i+1}|}{\ell_{i}} \, \tau& \text{if }x\in\side{P_i}{P_{i+1}},
\end{cases}
\quad\qquad\text{where}\quad\tau\defeq\frac{P_{i+1}-P_{i-1}}{|P_{i+1}-P_{i-1}|}\,.
\end{equation}
Then the normal component of the initial velocity is
\begin{align*}
X\cdot\nu_{i-1} = -\frac{\sin\alpha_{i}^-}{\ell_{i-1}}|x-P_{i-1}| & \qquad\text{on }\side{P_{i-1}}{P_i},\\
X\cdot\nu_i = \frac{\sin\alpha_{i}^+}{\ell_i}|x-P_{i+1}| & \qquad\text{on }\side{P_{i}}{P_{i+1}},
\end{align*}
and we obtain \eqref{firstvar_ric} by applying Proposition~\ref{prop:firstvar}.

The first condition in \eqref{firstvar_ric2} follows from the fact that this perturbation is area preserving: $|\p_t|=|\p|$. The formula for the first variation of the perimeter follows from the identity
\begin{align*}
\per(\p_t)
& = \per(\p) + \sqrt{\ell_{i-1}^2+2t\ell_{i-1}\cos\alpha_i^-+t^2} - \ell_{i-1} + \sqrt{\ell_i^2 -2t\ell_i\cos\alpha_i^++t^2}-\ell_i \\
& = \per(\p) + t (\cos\alpha_{i}^--\cos\alpha_{i}^+) + o(t),
\end{align*}
which can be checked by elementary geometric arguments.
\end{proof}

Arguing as in Theorem~\ref{thm:stationarity}, we find that the stationarity conditions of a polygon $\p$ with respect to the variation in Definition~\ref{def:ricIvar} are
\begin{equation} \label{stationarity-ric1}
I_i = 0 \qquad\text{(under area constraint)}
\end{equation}
and
\begin{equation} \label{stationarity-ric2}
I_i = 2\bar{\sigma}\bigl(\cos\alpha_{i}^--\cos\alpha_{i}^+\bigr) \qquad\text{(under perimeter constraint),}
\end{equation}
where $\bar{\sigma}=\frac{\sigma}{\per(\p)}$. In the next proposition we show that these conditions follow by the stationarity conditions with respect to the sliding and tilting perturbations.

\begin{proposition} \label{prop:ricIvar2}
If the polygon $\p$ satisfies \eqref{eq:sliding} and \eqref{eq:tilting} on the two consecutive sides $\side{P_{i-1}}{P_i}$ and $\side{P_i}{P_{i+1}}$, then \eqref{stationarity-ric1} holds. If $\p$ satisfies \eqref{eq:sliding2} and \eqref{eq:tilting2} on the two consecutive sides $\side{P_{i-1}}{P_i}$ and $\side{P_i}{P_{i+1}}$, then \eqref{stationarity-ric2} holds.
\end{proposition}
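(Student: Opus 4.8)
The plan is to prove, for an \emph{arbitrary} polygon $\p$ (with $\theta_i<\pi$, as assumed in this subsection), that the first variation $I_i$ of \eqref{firstvar_ric} is one fixed linear combination of the sliding first variation \eqref{firstvar_sliding1} and the tilting first variation \eqref{firstvar_tilting1}, evaluated on the two sides $\side{P_{i-1}}{P_i}$ and $\side{P_i}{P_{i+1}}$ adjacent to $P_i$. Once this algebraic identity is available, both assertions reduce to substituting the hypotheses and simplifying.

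\textbf{Step 1: the decomposition.} The mechanism behind the identity is the elementary splitting of each of the two sides at its midpoint. For $x\in\side{P_i}{M_i}$ one has $|x-P_{i+1}|=\tfrac{\ell_i}{2}+|x-M_i|$, while for $x\in\side{M_i}{P_{i+1}}$ one has $|x-P_{i+1}|=\tfrac{\ell_i}{2}-|x-M_i|$; symmetrically $|x-P_{i-1}|=\tfrac{\ell_{i-1}}{2}-|x-M_{i-1}|$ on $\side{P_{i-1}}{M_{i-1}}$ and $|x-P_{i-1}|=\tfrac{\ell_{i-1}}{2}+|x-M_{i-1}|$ on $\side{M_{i-1}}{P_i}$. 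Introducing
\[
\mathcal S_j\defeq\int_{\side{P_j}{P_{j+1}}}v_\p\,\dd\Hone,\qquad \mathcal T_j\defeq\int_{\side{P_j}{M_j}}v_\p\,|x-M_j|\,\dd\Hone-\int_{\side{M_j}{P_{j+1}}}v_\p\,|x-M_j|\,\dd\Hone,
\]
so that $2\mathcal S_j$ and $2\mathcal T_j$ are precisely the right-hand sides of \eqref{firstvar_sliding1} and \eqref{firstvar_tilting1} on the $j$-th side, these relations give $\int_{\side{P_i}{P_{i+1}}}v_\p|x-P_{i+1}|\dd\Hone=\tfrac{\ell_i}{2}\mathcal S_i+\mathcal T_i$ and $\int_{\side{P_{i-1}}{P_i}}v_\p|x-P_{i-1}|\dd\Hone=\tfrac{\ell_{i-1}}{2}\mathcal S_{i-1}-\mathcal T_{i-1}$. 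Inserting these into \eqref{firstvar_ric} yields, after collecting terms, the identity
\[
I_i=\sin\alpha_i^+\,\mathcal S_i+\frac{2\sin\alpha_i^+}{\ell_i}\,\mathcal T_i-\sin\alpha_i^-\,\mathcal S_{i-1}+\frac{2\sin\alpha_i^-}{\ell_{i-1}}\,\mathcal T_{i-1},
\]
valid for every polygon.

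\textbf{Step 2: the two implications.} If $\p$ satisfies \eqref{eq:sliding} and \eqref{eq:tilting} on the sides $i-1$ and $i$, then $\mathcal T_{i-1}=\mathcal T_i=0$ and $\mathcal S_{i-1}/\ell_{i-1}=\mathcal S_i/\ell_i=:c$, so the identity of Step~1 collapses to $I_i=c\,(\ell_i\sin\alpha_i^+-\ell_{i-1}\sin\alpha_i^-)$; since both $\ell_i\sin\alpha_i^+$ and $\ell_{i-1}\sin\alpha_i^-$ equal the altitude from $P_i$ in the triangle $P_{i-1}P_iP_{i+1}$, the bracket vanishes and $I_i=0$, i.e.\ \eqref{stationarity-ric1}. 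If instead $\p$ satisfies \eqref{eq:sliding2} and \eqref{eq:tilting2} on the sides $i-1$ and $i$, then $\mathcal S_j=\bar{\sigma}\bigl(\psi(\theta_j)+\psi(\theta_{j+1})\bigr)$ and $\mathcal T_j=\tfrac{\bar{\sigma}\ell_j}{2}\bigl(\psi(\theta_j)-\psi(\theta_{j+1})\bigr)$ for $j=i-1,i$; substituting, the $\psi(\theta_{i-1})$ and $\psi(\theta_{i+1})$ terms cancel in pairs and one is left with $I_i=2\bar{\sigma}\,\psi(\theta_i)\bigl(\sin\alpha_i^+-\sin\alpha_i^-\bigr)$. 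Using $\psi(\theta)=(1+\cos\theta)/\sin\theta=\cot(\theta/2)$ (half-angle formula applied to \eqref{eq:cotangent}), the angle-sum relation $\alpha_i^-+\alpha_i^+=\pi-\theta_i$ in the triangle $P_{i-1}P_iP_{i+1}$, and the identities $\sin\alpha_i^+-\sin\alpha_i^-=2\cos\tfrac{\alpha_i^++\alpha_i^-}{2}\sin\tfrac{\alpha_i^+-\alpha_i^-}{2}$ and $\cos\alpha_i^--\cos\alpha_i^+=2\sin\tfrac{\alpha_i^++\alpha_i^-}{2}\sin\tfrac{\alpha_i^+-\alpha_i^-}{2}$, the right-hand side simplifies to $2\bar{\sigma}\bigl(\cos\alpha_i^--\cos\alpha_i^+\bigr)$, i.e.\ \eqref{stationarity-ric2}.

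\textbf{Main obstacle.} This is essentially a bookkeeping argument, so I do not expect a genuine difficulty. The two points that need care are the signs in the midpoint-splitting identities of Step~1 — the contributions of $\side{P_{i-1}}{P_i}$ and of $\side{P_i}{P_{i+1}}$ enter asymmetrically because $P_{i-1}$ and $P_{i+1}$ sit on opposite halves relative to the respective midpoints — and recognising the closed form $\psi(\theta)=\cot(\theta/2)$, which is what makes the trigonometric identity at the end of Step~2 transparent.
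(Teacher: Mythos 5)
Your proposal is correct and follows essentially the same route as the paper: the same midpoint splitting of the two adjacent sides to rewrite $I_i$ as a linear combination of the sliding and tilting first variations, followed by substitution of the hypotheses and the law of sines (area case) or the sum-to-product identities with $\alpha_i^-+\theta_i+\alpha_i^+=\pi$ (perimeter case). The only difference is presentational — you make explicit the identity $\psi(\theta)=\cot(\theta/2)$ where the paper simply invokes "elementary trigonometric relations".
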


\begin{proof}
We have
\begin{align}\label{proof:ricIvar}
I_i & = \frac{2\sin\alpha_{i}^+}{\ell_{i}} \biggl[ \int_{\side{P_{i}}{M_{i}}}v_{\p}(x) \Bigl(|x-M_{i}|+\frac{\ell_i}{2}\Bigr) + \int_{\side{M_{i}}{P_{i+1}}}v_{\p}(x) \Bigl( \frac{\ell_i}{2} - |x-M_{i}| \Bigr) \biggr] \nonumber\\
& \hskip.2cm - \frac{2\sin\alpha_{i}^-}{\ell_{i-1}} \biggl[ \int_{\side{P_{i-1}}{M_{i-1}}}v_{\p}(x) \Bigl(\frac{\ell_{i-1}}{2}-|x-M_{i-1}|\Bigr) + \int_{\side{M_{i-1}}{P_i}}v_{\p}(x) \Bigl(|x-M_{i-1}|+\frac{\ell_{i-1}}{2}\Bigr) \biggr] \nonumber \\
& = \sin\alpha_{i}^+ \int_{\side{P_{i}}{P_{i+1}}}v_{\p}(x)\dd\Hone(x) - \sin\alpha_{i}^- \int_{\side{P_{i-1}}{P_{i}}}v_{\p}(x)\dd\Hone(x) \nonumber\\
& \hskip.2cm + \frac{2\sin\alpha_{i}^+}{\ell_{i}} \biggl[ \int_{\side{P_{i}}{M_{i}}}v_{\p}(x) |x-M_{i}|\dd\Hone(x) - \int_{\side{M_{i}}{P_{i+1}}}v_{\p}(x) |x-M_{i}|\dd\Hone(x) \biggr] \nonumber\\
& \hskip.2cm + \frac{2\sin\alpha_{i}^-}{\ell_{i-1}} \biggl[ \int_{\side{P_{i-1}}{M_{i-1}}}v_{\p}(x) |x-M_{i-1}|\dd\Hone(x) - \int_{\side{M_{i-1}}{P_i}}v_{\p}(x) |x-M_{i-1}|\dd\Hone(x) \biggr].
\end{align}
Assume now that \eqref{eq:sliding} and \eqref{eq:tilting} hold on the two consecutive sides $\side{P_{i-1}}{P_i}$ and $\side{P_i}{P_{i+1}}$. Then by \eqref{eq:tilting} the last two lines in \eqref{proof:ricIvar} are equal to zero, therefore
\begin{align*}
I_i & = \sin\alpha_{i}^+ \int_{\side{P_{i}}{P_{i+1}}}v_{\p}(x)\dd\Hone(x) - \sin\alpha_{i}^- \int_{\side{P_{i-1}}{P_{i}}}v_{\p}(x)\dd\Hone(x) \\
& = \ell_i\sin\alpha_{i}^+ \biggl( \frac{1}{\ell_i}\int_{\side{P_i}{P_{i+1}}}v_{\p}(x)\dd\Hone(x) - \frac{1}{\ell_{i-1}}\int_{\side{P_{i-1}}{P_i}}v_{\p}(x)\dd\Hone(x) \biggr)
\xupref{eq:sliding}{=}0,
\end{align*}
where we used the identity $\frac{\ell_{i-1}}{\sin\alpha_{i}^+} = \frac{\ell_i}{\sin\alpha_{i}^-}$. This proves \eqref{stationarity-ric1}.

Assume instead \eqref{eq:sliding2} and \eqref{eq:tilting2} hold on the two consecutive sides $\side{P_{i-1}}{P_i}$ and $\side{P_i}{P_{i+1}}$. Then substituting in \eqref{proof:ricIvar} we obtain
\begin{align*}
I_i &= \bar{\sigma}\sin\alpha_{i}^+\bigl(\psi(\theta_i)+\psi(\theta_{i+1})\bigr) - \bar{\sigma}\sin\alpha_{i}^-\bigl(\psi(\theta_{i-1})+\psi(\theta_{i})\bigr) \\
& \qquad + \bar{\sigma}\sin\alpha_{i}^+\bigl(\psi(\theta_i)-\psi(\theta_{i+1})\bigr) + \bar{\sigma}\sin\alpha_{i}^-\bigl(\psi(\theta_{i-1})-\psi(\theta_{i})\bigr) \\
& = 2\bar{\sigma}\psi(\theta_i)\bigl(\sin\alpha_{i}^+ - \sin\alpha_{i}^-\bigr) \\
& \xupref{eq:cotangent}{=} \frac{2\bar{\sigma}}{\sin\theta_i}\Bigl( \cos\theta_i\sin\alpha_{i}^+ + \sin\alpha_{i}^+ - \cos\theta_i\sin\alpha_{i}^- - \sin\alpha_{i}^-\Bigr) \\
& = 2\bar{\sigma} \bigl(\cos\alpha_{i}^--\cos\alpha_{i}^+\bigr),
\end{align*}
where the last equality follows by elementary trigonometric relations, using the fact that $\alpha_{i}^-+\theta_i+\alpha_{i}^+=\pi$. This proves \eqref{stationarity-ric2}.
\end{proof}


The strategy to prove Theorem~\ref{thm2} for quadrilaterals is mainly based on the previous proposition: indeed, we will prove that if two consecutive sides have different lengths $\ell_{i-1}\neq\ell_i$, then the first variation of the nonlocal energy with respect to the perturbation in Definition~\ref{def:ricIvar} is different from zero; in turn, by Proposition~\ref{prop:ricIvar2} the quadrilateral does not satisfy the stationarity conditions \eqref{eq:sliding} and \eqref{eq:tilting}. As a consequence, a quadrilateral satisfying both \eqref{eq:sliding} and \eqref{eq:tilting} is necessarily equilateral (that is, it is a rhombus). In a final step we will also show that the polygon must be equiangular. The proof of Theorem~\ref{thm3} follows by a similar argument, comparing the sign of $I_i$ with the sign of the right-hand side of \eqref{stationarity-ric2}.


\section{Overdetermined problem for triangles}\label{sec:triangles}

In this section we prove Theorem~\ref{thm2} and Theorem~\ref{thm3} for triangles, i.e., in the case $N=3$. We give two alternative proofs of Theorem~\ref{thm2}; the ideas used in both proofs will appear in the next section when we prove the results for quadrilaterals.

\begin{remark} \label{rmk:triangles}
Notice that, in the proof of Theorem~\ref{thm2} for triangles, we will use only condition \eqref{eq:tilting}. In fact, \eqref{eq:sliding} is satisfied by \emph{every} triangle, since the operation of sliding one side and rescaling to restore the area leaves the triangle unchanged, and the corresponding first variation is equal to zero. For the same reason, also the condition \eqref{eq:sliding2} is satisfied by every triangle.
\end{remark}

\subsection{Equilateral triangles via reflection arguments} \label{subsec:triangles_reflection}
In the first proof we will use reflection arguments to obtain that if $\p\in\pol_3$ satisfies \eqref{eq:tilting} or \eqref{eq:tilting2}, then $\p$ is equilateral.

\begin{proof}[First proof of Theorem~\ref{thm2} in the case $N=3$] Let $\p \in \pol_3$ be an arbitrary triangle and assume that $\p$ satisfies the condition \eqref{eq:tilting}. Without loss of generality, assume that $\p$ is translated and rotated so that the midpoint $M_1$ of the side $\side{P_1}{P_2}$ coincides with the origin of the $(x_1,x_2)$-plane, and the side $\side{P_1}{P_2}$ lies on the $x_1$-axis.

We show that, assuming condition \eqref{eq:tilting} holds on the side $\side{P_1}{P_2}$, we have $\theta_1=\theta_2$. Suppose by contradiction $\theta_1< \theta_2$. Let $\tld{\p}$ denote the reflection of $\p$ with respect to the $x_2$-axis, and define the sets $D \defeq \p \setminus \tld{\p}$ and $\tld{D} \defeq \tld{\p} \setminus \p$ (see Figure \ref{fig:triangles1}).

Let $x\in \side{M_1}{P_2}$ and denote by $\tld{x} \in \side{P_1}{M_1}$ the reflection of $x$ in the $x_2$-axis. Then
	\begin{equation} \label{eq:reflection_calc}
		\begin{aligned}
			v_{\tld{\p}}(x)  - v_{\p}(x) &= \int_{\tld{\p}} K(|x-y|) \dd y - \int_{\p} K(|x-y|) \dd y \\
														&= \int_{\tld{D}} K(|x-y|) \dd y - \int_{D} K(|x-y|) \dd y \\
												       &= \int_D \Big( K(|\tld{x}-y|) - K(|x-y|) \Big) \dd y < 0,
		\end{aligned}
	\end{equation}
since $|\tld{x}-y| > |x-y|$ for all $y\in D$ and $K$ is strictly decreasing. This implies that $v_{\tld{\p}}(x)  < v_{\p}(x)$ for all $x\in \side{M_1}{P_2}$. Multiplying both sides by $|x- M_1|$ and integrating, then, yields
	\[
		\int_{\side{M_1}{P_2}} v_\p(x)|x-M_1| \dd \Hone(x) > \int_{\side{M_1}{P_2}} v_{\tld{\p}}(x)|x-M_1| \dd \Hone(x) = \int_{\side{P_1}{M_1}} v_\p(x)|x- M_1|\dd \Hone(x),
	\]
which contradicts the condition \eqref{eq:tilting} on $\side{P_1}{P_2}$. This implies that $\theta_1= \theta_2$, i.e., $\p$ is isosceles.

Repeating the argument for another pair of angles, say $\theta_2$ and $\theta_3$, we obtain that $\theta_1=\theta_2=\theta_3$, i.e., $\p$ is equilateral.
\end{proof}

\begin{proof}[Proof of Theorem~\ref{thm3} in the case $N=3$]
As in the previous proof, assuming condition \eqref{eq:tilting2} on the side $\side{P_1}{P_2}$, we show that $\theta_1=\theta_2$. Suppose by contradiction that $\theta_1<\theta_2$. Then, as before, we obtain that the left-hand side of \eqref{eq:tilting2} (for $i=1$) is strictly negative, and therefore $\psi(\theta_1)-\psi(\theta_2)<0$. This is a contradiction since $\psi$ is monotone decreasing and $\theta_1<\theta_2$.
\end{proof}

Notice that, by the previous proofs, it is sufficient to assume that \eqref{eq:tilting} or \eqref{eq:tilting2} hold just for two of the three sides of a triangle in order to deduce that it is equilateral.

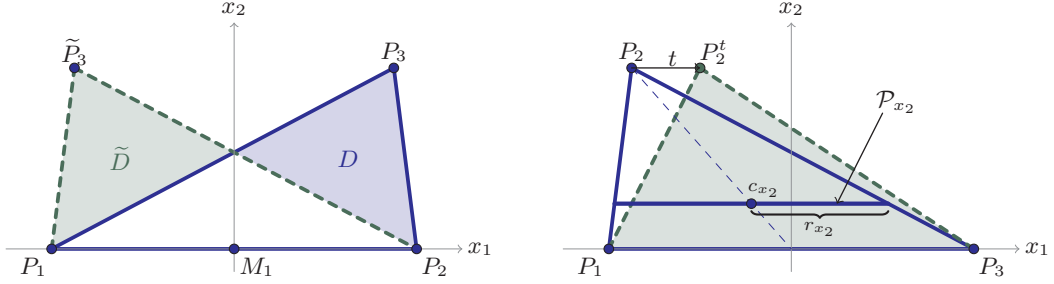
\begin{figure}[ht]
\definecolor{qqqqff}{rgb}{0,0,1}
\definecolor{qqwuqq}{rgb}{0,0.4,0}
\begin{tikzpicture}[scale=0.6,line cap=round,line join=round]
\clip(-6,-1) rectangle (6,7);
\fill[line width=0pt,fill=qqwuqq,fill opacity=0.2] (-3.5,4) -- (0,32/15) -- (-4,0) -- cycle;
\fill[line width=0pt,fill=qqqqff,fill opacity=0.2] (3.5,4) -- (0,32/15) -- (4,0) -- cycle;
\draw [line width=1.4pt,color=qqqqff] (-4,0)-- (3.5,4);
\draw [line width=1.4pt,color=qqqqff] (3.5,4)-- (4,0);
\draw [line width=1.4pt,color=qqqqff] (4,0)-- (-4,0);
\draw [line width=1.4pt,dash pattern=on 3pt off 3pt,color=qqwuqq] (-4,0)-- (-3.5,4);
\draw [line width=1.4pt,dash pattern=on 3pt off 3pt,color=qqwuqq] (-3.5,4)-- (4,0);
\draw [->,line width=0.3pt,color=black!50] (-5,0)-- (5,0);
\draw [->,line width=0.3pt,color=black!50] (0,-0.5)-- (0,5);
\begin{footnotesize}
\draw [fill=qqqqff] (-4,0) circle (3pt);
\draw (-4.4,-0.4) node {$P_1$};
\draw [fill=qqqqff] (4,0) circle (3pt);
\draw (4.4,-0.4) node {$P_2$};
\draw (5.4,0) node {$x_1$};
\draw (0,5.3) node {$x_2$};
\draw [fill=qqqqff] (3.5,4) circle (3pt);
\draw (3.5,4.35) node {$P_3$};
\draw [fill=qqqqff] (-3.5,4) circle (3pt);
\draw (-3.5,4.35) node {$\tld{P}_3$};
\draw [fill=qqqqff] (0,0) circle (3pt);
\draw (0.5,-0.4) node {$M_1$};
\draw [color=qqwuqq] (-2.5,2) node {$\tld{D}$};
\draw [color=qqqqff] (2.5,2) node {$D$};
\end{footnotesize}
\end{tikzpicture}
\begin{tikzpicture}[scale=0.6,line cap=round,line join=round]
\clip(-6,-1) rectangle (6,7);
\fill[line width=0pt,fill=qqwuqq,fill opacity=0.2] (-4,0) -- (-2,4) -- (4,0) -- cycle;
\draw [line width=1.4pt,color=qqqqff] (-4,0)-- (-3.5,4);
\draw [line width=1.4pt,color=qqqqff] (-3.5,4)-- (4,0);
\draw [line width=1.4pt,color=qqqqff] (4,0)-- (-4,0);
\draw [line width=1.4pt,dash pattern=on 3pt off 3pt,color=qqwuqq] (-4,0)-- (-2,4);
\draw [line width=1.4pt,dash pattern=on 3pt off 3pt,color=qqwuqq] (-2,4)-- (4,0);
\draw [->,line width=0.3pt,color=black!50] (-5,0)-- (5,0);
\draw [->,line width=0.3pt,color=black!50] (0,-0.5)-- (0,5);
\draw [line width=0.3pt,dash pattern=on 3pt off 3pt,color=qqqqff] (-3.5,4) -- (0,0);
\draw [line width=1.6pt,color=qqqqff] (-31/8,1) -- (17/8,1);
\begin{footnotesize}
\draw [fill=qqqqff] (-4,0) circle (3pt);
\draw (-4.4,-0.4) node {$P_1$};
\draw [fill=qqqqff] (4,0) circle (3pt);
\draw (4.4,-0.4) node {$P_3$};
\draw (5.4,0) node {$x_1$};
\draw (0,5.3) node {$x_2$};
\draw [fill=qqqqff] (-3.5,4) circle (3pt);
\draw (-3.5,4.35) node {$P_2$};
\draw [fill=qqqqff] (-7/8,1) circle (3pt);
\draw (-0.6,1.3) node {{\tiny $c_{x_2}$}};
\draw [fill=qqwuqq] (-2,4) circle (3pt);
\draw (-1.7,4.35) node {$P_2^t$};
\draw (-2.6,4.2) node {$t$};
\draw (2.3,3.25) node {$\p_{x_2}$};
\draw [
    thick,
    decoration={
        brace,
        mirror,
        raise=0.1cm
    },
    decorate
] (-7/8,1) -- (17/8,1) 
node [pos=0.5,anchor=north,yshift=-0.12cm] {{\tiny $r_{x_2}$}}; 
\draw [->,line width=0.5pt] (2,3) -- (1,1);
\end{footnotesize}
\draw [->,line width=0.5pt] (-3.5,4)-- (-2,4);
\end{tikzpicture}
\caption{The sets $D=\p\setminus\tld{\p}$ and $\tld{D}=\tld{\p}\setminus\p$ used in the reflection argument (left) and the area-preserving variation $\Phi_t(\p)$ used in the first variation argument (right).}\label{fig:triangles1}
\end{figure}

\subsection{Equilateral triangles via first variation arguments} \label{subsec:triangles_first_var}
Our second proof is inspired by the arguments in \cite[Section~2.2.2]{CaHiVoYa2016} where the authors study the interaction energy $\nl$ under continuous Steiner symmetrizations. Instead, we will use the the volume-preserving variations in Definition \ref{def:ricIvar} and show that the first variation of $\nl$ along these perturbations is strictly positive unless the triangle is isosceles.

The main idea of the proof is to express the first variation of the energy using slices of the triangle and to compute the derivative of the interaction between two slices. Let us start by fixing our notation for this subsection. Let $\p\in\pol_3$ and, as in Figure~\ref{fig:triangles1}, fix the coordinate axes so that the midpoint $M_3$ of the side $\side{P_3}{P_{1}}$ coincides with the origin of the $(x_1,x_2)$-plane, the side $\side{P_1}{P_3}$ lies on the $x_1$-axis, the point $P_1$ is on the negative $x_1$-axis, and the point $P_2$ is in the upper half-plane. Assume $\theta_1 > \theta_3$. For $x_2>0$, let
\begin{equation*}
\p_{x_2} \defeq \big\{ x_1 \in \R \colon (x_1,x_2)\in\p \big\}.
\end{equation*}
Note that $\p_{x_2} \subset \R$ is an interval $(c_{x_2}-r_{x_2},c_{x_2}+r_{x_2})$ with $r_{x_2}\geq 0$ and $c_{x_2}<0$ since $\theta_1>\theta_3$.

Let $\{\Phi_t\}_t$ denote the flow \eqref{eq:ricflow} introduced in the proof of Proposition~\ref{prop:ricIvar}. Then
	\[
		\Phi_t(\p_{x_2}) = \p_{x_2} + \alpha x_2\, t \qquad \text{ with } \quad \alpha \defeq \frac{1}{\ell_1 \sin \theta_1},
	\]
and
	\[
		\Phi_t(\p) = \big\{ (x_1,x_2) \colon x_1 \in \Phi_t(\p_{x_2}) \big\}.
	\]
The next lemma shows that the derivative of the interaction between two slices is strictly positive for $C^1$ and even interaction kernels.

\begin{lemma} \label{lem:carrillo}
Let $W \in C^1(\R)$ be an even function with $W^\pr(r)<0$ for $r > 0$. Then for every $x_2,\, y_2>0$ setting
	\[
		\calI_W\big[\p_{x_2},\p_{y_2}\big](t) \defeq \int_\R \! \int_\R W(x_1-y_1)\chi_{\Phi_t(\p_{x_2})}(x_1)\chi_{\Phi_t(\p_{y_2})}(y_1) \dd x_1 \dd y_1
	\]
we have
	\[
		\frac{\dd}{\dd t}\Big|_{t=0} \calI_W\big[\p_{x_2},\p_{y_2}\big](t) \geq C_W\, \alpha \min\{r_{x_2},r_{y_2}\} |c_{x_2}-c_{y_2}| \, |x_2-y_2|,
	\]
where
	\begin{equation}\label{eq:C_W}
		C_W = \min \big\{ |W^\pr(r)| \colon r\in \big[|c_{x_2}-c_{y_2}|/2, |c_{x_2}-c_{y_2}|+r_{x_2}+r_{y_2}\big] \big\}.
	\end{equation}
\end{lemma}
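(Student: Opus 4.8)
The plan is to compute the derivative directly by differentiating under the integral sign, exploiting the fact that $\Phi_t$ acts on each slice as a pure translation $\p_{x_2}\mapsto\p_{x_2}+\alpha x_2 t$. Writing $\calI_W[\p_{x_2},\p_{y_2}](t)$ as $\int_{\p_{x_2}}\int_{\p_{y_2}} W\bigl((x_1+\alpha x_2 t)-(y_1+\alpha y_2 t)\bigr)\dd y_1\dd x_1$ by the change of variables $x_1\mapsto x_1+\alpha x_2 t$, $y_1\mapsto y_1+\alpha y_2 t$, the $t$-dependence is moved entirely into the argument of $W$: the integrand becomes $W\bigl(x_1-y_1+\alpha(x_2-y_2)t\bigr)$ over the fixed domain $\p_{x_2}\times\p_{y_2}$. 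Differentiating at $t=0$ then gives
\begin{equation*}
\frac{\dd}{\dd t}\Big|_{t=0}\calI_W[\p_{x_2},\p_{y_2}](t) = \alpha(x_2-y_2)\int_{\p_{x_2}}\int_{\p_{y_2}} W^\pr(x_1-y_1)\dd y_1\dd x_1,
\end{equation*}
which is justified since $W\in C^1$ and the domains are bounded. So everything reduces to estimating the double integral $J\defeq\int_{\p_{x_2}}\int_{\p_{y_2}} W^\pr(x_1-y_1)\dd y_1\dd x_1$ and checking that $\alpha(x_2-y_2)\cdot J$ has the asserted lower bound.

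The key observation for estimating $J$ is a symmetrization/odd-part argument on the variable $s\defeq x_1-y_1$: because $W^\pr$ is odd (as $W$ is even), while integrating over two intervals $\p_{x_2}=(c_{x_2}-r_{x_2},c_{x_2}+r_{x_2})$ and $\p_{y_2}=(c_{y_2}-r_{y_2},c_{y_2}+r_{y_2})$ whose centers differ by $c_{x_2}-c_{y_2}$, the contributions from $s$ near $0$ cancel, and $J$ picks up a definite sign governed by $\mathrm{sign}(c_{x_2}-c_{y_2})$. Concretely, I would pair the point $(x_1,y_1)$ with its reflection through the common ``center'' in such a way that the two interval domains are swapped; the net effect is that $J$ equals (up to sign) an integral of $|W^\pr|$ over a region of the plane whose area is controlled from below by $\min\{r_{x_2},r_{y_2}\}\cdot|c_{x_2}-c_{y_2}|$ — essentially a parallelogram of base $\sim|c_{x_2}-c_{y_2}|$ and the shorter of the two widths $2\min\{r_{x_2},r_{y_2}\}$ — on which the distance $|s|$ stays in the range $[\,|c_{x_2}-c_{y_2}|/2,\ |c_{x_2}-c_{y_2}|+r_{x_2}+r_{y_2}\,]$, so $|W^\pr(s)|\geq C_W$ there. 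Multiplying by $\alpha(x_2-y_2)$ and noting that $\mathrm{sign}\bigl(\alpha(x_2-y_2)\bigr)\cdot\mathrm{sign}(J)$ works out correctly (both factors carry matched signs, or the product is $\geq0$ after taking absolute values) yields
\begin{equation*}
\frac{\dd}{\dd t}\Big|_{t=0}\calI_W[\p_{x_2},\p_{y_2}](t)\ \geq\ C_W\,\alpha\,\min\{r_{x_2},r_{y_2}\}\,|c_{x_2}-c_{y_2}|\,|x_2-y_2|.
\end{equation*}

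The main obstacle I anticipate is making the geometric estimate of $J$ clean and sign-correct rather than getting lost in casework: one has to carefully track the sign of $c_{x_2}-c_{y_2}$ against the sign of $x_2-y_2$ (recall $c_{x_2}<0$ and the slices of a triangle with $\theta_1>\theta_3$ have monotone centers, so larger $x_2$ means more negative $c_{x_2}$, tying the two signs together), and to identify explicitly the sub-rectangle or parallelogram in the $(x_1,y_1)$-plane on which $W^\pr$ does not change sign and $|W^\pr|\geq C_W$. A convenient way to organize this is to substitute $s=x_1-y_1$, $u=x_1+y_1$ (or just integrate out $u$ first), reducing $J$ to a one-dimensional integral $\int_\R W^\pr(s)\,g(s)\dd s$ where $g$ is the (piecewise linear, tent-like) length of the $u$-fiber; then the odd-part cancellation and the lower bound on the support of the relevant part of $g$ become transparent. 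The rest is elementary estimation of the area of that support region, which I would not carry out in detail here.
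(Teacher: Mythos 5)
Your plan is correct and follows essentially the same route as the paper: differentiate under the integral to reduce to $\alpha(x_2-y_2)\iint W'(x_1-y_1)$ over the product of the two slices, cancel the part that is symmetric under $s\mapsto -s$ using the oddness of $W'$, and bound the leftover on a region of area $\min\{r_{x_2},r_{y_2}\}\,|c_{x_2}-c_{y_2}|$ where $|s|$ lies in the window defining $C_W$. Your one-dimensional reformulation via the fiber-length function $g(s)$ is just a repackaging of the paper's slicing of the rectangle $R$ along the lines $y_1=x_1\pm h$ and its residual sub-rectangle $D$, and your sign bookkeeping (monotone centers tie $\operatorname{sign}(c_{x_2}-c_{y_2})$ to $\operatorname{sign}(x_2-y_2)$) matches the paper's.
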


\begin{proof}
For simplicity of notation, we will drop the subscripts on the centers and radii of $\p_{x_2}$ and $\p_{y_2}$. Namely, let $\p_{x_2} = (c_x - r_x,c_x+r_x)$ and $\p_{y_2}=(c_y-r_y,c_y+r_y)$. Assume, without loss of generality, that $y_2>x_2>0$. Then $r_y < r_x$ and $c_y<c_x<0$, since $\theta_1 > \theta_3$.

Then 
	\[
	\calI_W\big[\p_{x_2},\p_{y_2}\big](t) = \int_{-r_x}^{r_x}\!\int_{-r_y}^{r_y} W\big(x_1-y_1 + c_x-c_y + \alpha x_2\, t - \alpha y_2 \, t\big) \dd y_1 \dd x_1,
	\]
and we get that
\begin{align*}
\frac{\dd}{\dd t}\Big|_{t=0} \calI_W\big[\p_{x_2},\p_{y_2}\big](t)
& = \alpha(x_2-y_2) \int_{-r_x}^{r_x}\!\int_{-r_y}^{r_y} W^\pr\big((x_1-y_1) + (c_x-c_y)\big) \dd y_1 \dd x_1 \\
& = \alpha(x_2-y_2) \iint_{R} W^\pr(x_1-y_1)\dd x_1 \dd y_1,
\end{align*}
where $R$ denotes the rectangle $\big[-r_x + (c_x-c_y),r_x+(c_x-c_y)\big] \times \big[-r_y,r_y \big]$ in the $(x_1,y_1)$-plane. Now, let 
	\begin{gather*}
		R^+\defeq R \cap \{y_1>x_1\}, \ R^- \defeq R \cap \{y_1 < x_1\}, \ \tld{R}^- \defeq R^- \cap \{x_1 \leq r_x\}, \\
		\text{ and } D \defeq  \big[r_x + (c_x-c_y)/2,r_x+(c_x-c_y)\big] \times \big[-r_y,r_y \big],
	\end{gather*}
and note that $R^+$, $\tld{R}^-$, $D$ are disjoint subsets of $R$, with $D\subset R^-$ (see Figure \ref{fig:rectangle}). Moreover $W^\pr(x_1-y_1) > 0$ on $R^+$ and $W^{\pr}(x_1-y_1)<0$ on $R^-$. Since $x_2-y_2<0$ we get that
	\beqn \label{eq:carrillo_est1}
		\begin{aligned}
			\frac{\dd}{\dd t}\Big|_{t=0} \calI_W\big[\p_{x_2},\p_{y_2}\big](t) &= \alpha(x_2-y_2) \iint_{R} W^\pr(x_1-y_1)\dd x_1 \dd y_1 \\
																														  &\geq \alpha(x_2-y_2) \left[ \iint_{R^+} W^\pr(x_1-y_1)\dd x_1 \dd y_1 \right. \\ & \ \quad + \left. \iint_{\tld{R}^{-}} W^\pr(x_1-y_1) \dd x_1 \dd y_1
																														  + \iint_D W^\pr(x_1-y_1)\dd x_1 \dd y_1 \right].
		\end{aligned}
	\eeqn
	
\begin{figure}[ht]  
\definecolor{qqqqff}{rgb}{0,0,1}
\definecolor{qqwuqq}{rgb}{0,0.4,0}
\begin{tikzpicture}[scale=0.5,line cap=round,line join=round]
\clip(-8,-5) rectangle (10.5,5);
\draw [->,line width=0.3pt,color=black!50] (-4,0)-- (9,0);
\draw [->,line width=0.3pt,color=black!50] (0,-5)-- (0,5);
\draw [line width=0.3pt,color=black!50] (-4,-4)-- (5,5);
\draw [line width=1.4pt,color=qqqqff] (-3,-4) -- (7,-4) -- (7,4) -- (-3,4) -- (-3,-4);
\draw [line width=0.3pt,color=qqwuqq] (6,4)-- (6,-4);
\draw [line width=0.3pt,color=black!50] (5,4)-- (5,-4);
\fill[line width=0pt,fill=qqwuqq,fill opacity=0.2] (6,4) -- (7,4) -- (7,-4) -- (6,-4) -- cycle;
\fill[line width=0pt,fill=qqqqff,fill opacity=0.2] (-3,-3) -- (4,4) -- (-3,4) -- cycle;
\fill[line width=0pt,fill=gray,fill opacity=0.2] (4,4) -- (5,4) -- (5,-4) -- (-3,-4) -- (-3,-3) -- cycle;

\draw [fill=qqqqff] (-3,0) circle (2pt);
\draw [fill=qqqqff] (0,4) circle (2pt);
\draw [fill=qqqqff] (0,-4) circle (2pt);
\draw [fill=qqqqff] (7,0) circle (2pt);
\draw [fill=black!50] (5,0) circle (2pt);
\begin{tiny}
\draw (8.7,-0.3) node {$x_1$};
\draw (0.4,4.7) node {$y_1$};
\draw (5.9,4.7) node {$y_1=x_1$};
\draw (8.7,0.3) node {$r_x+c_x-c_y$};
\draw (-4.8,0.3) node {$-r_x+c_x-c_y$};
\draw (4.6,0.3) node {$r_x$};
\draw (0.4,3.65) node {$r_y$};
\draw (-0.48,-3.7) node {$-r_y$};
\end{tiny}
\begin{footnotesize}
\draw [color=qqwuqq] (6.5,-3.55) node {$D$};
\draw [color=qqqqff] (-2.35,3.55) node {$R^+$};
\draw [color=black!80] (4.5,-3.45) node {$\tld{R}^-$};
\end{footnotesize}
\end{tikzpicture}
\caption{Subsets $R^+$, $\tld{R}^-$ and $D$ of the rectangle $R$.}\label{fig:rectangle}
	\end{figure}
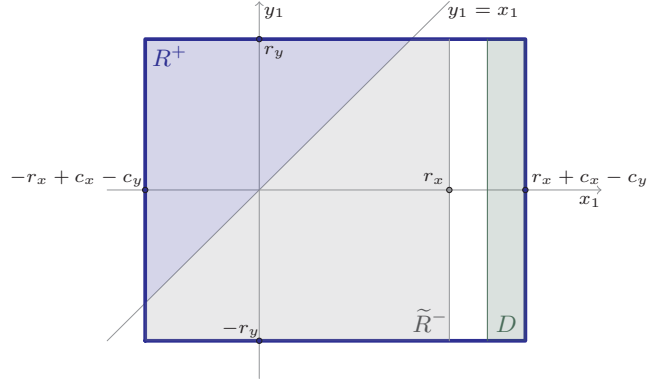
	
Since $R^+ \cup \tld{R}^-$ is a rectangle with center $(\frac{c_x-c_y}{2},0)$, for every $h>0$ we have that $\mathcal{L}^1(R^+ \cap \{y_1 = x_1 + h\}) \leq \mathcal{L}^1(\tld{R}^- \cap \{y_1 = x_1 - h\})$. This, in turn, implies that
	\[
		\iint_{R^+} W^\pr(x_1-y_1)\dd x_1 \dd y_1 + \iint_{\tld{R}^{-}} W^\pr(x_1-y_1) \dd x_1 \dd y_1 \leq 0.
	\]
Returning to \eqref{eq:carrillo_est1} and using the fact that $W$ is even, we obtain
	\begin{align*}
		\frac{\dd}{\dd t}\Big|_{t=0} \calI_W\big[\p_{x_2},\p_{y_2}\big](t) & \geq \alpha (y_2-x_2) \iint_D W^\pr(y_1-x_1)\dd x_1 \dd y_1 \\
																													  & \geq \alpha (y_2 - x_2)|D| \min_{(x_1,y_1)\in D} W^\pr(y_1-x_1) \\
																													  &= \alpha(y_2-x_2) r_y (c_x-c_y) \min_{(x_1,y_1)\in D} W^\pr(y_1-x_1) \\
																													  &\geq C_W \, \alpha (y_2-x_2) r_y(c_x-c_y),
	\end{align*}
which yields the result.
\end{proof}

Now we are ready to give the second proof of Theorem~\ref{thm2} in the case $N=3$.

\begin{proof}[Second proof of Theorem~\ref{thm2} in the case $N=3$] Let $\p\in\pol_3$ be as above and assume by contradiction that $\theta_1 > \theta_3$. As in the proof of Proposition~\ref{prop:firstvar}, to avoid problems in differentiating we regularize the kernel by introducing a small parameter $\delta>0$ and obtain $K_\delta$ and $\nl_\delta$ given by \eqref{eq:Kdelta}. Set $K_{\delta,l}(r) \defeq K_\delta(\sqrt{l^2+r^2}) = K(\sqrt{l^2+r^2}+\delta)$. Note that $K_{\delta,l}$ satisfies the assumptions of Lemma~\ref{lem:carrillo}, namely it is a $C^1$, even function with $K_{\delta,l}'(r)<0$ for $r>0$. Then, by Fubini's theorem,
	\[
		\nl_\delta(\Phi_t(\p)) = \int_{\R^2} \! \int_{\R^2} K_\delta(|x-y|)\chi_{\Phi_t(\p)}(x)\chi_{\Phi_t(\p)}(y)\dd x \dd y = \int_{\R}\!\int_{\R} \calI_{K_{\delta,l}}\big[\p_{x_2},\p_{y_2}\big](t) \dd x_2 \dd y_2
	\]
with $l \defeq |x_2-y_2|$.
Hence, by Lemma~\ref{lem:carrillo}, we get
\begin{align*}
\frac{\dd}{\dd t} \Big|_{t=0} \nl_\delta(\Phi_t(\p))
& = \int_{\R} \! \int_{\R} \frac{\dd}{\dd t}\Big|_{t=0} \calI_{K_{\delta,l}}\big[\p_{x_2},\p_{y_2}\big](t) \dd x_2 \dd y_2 \\
& \geq \int_{\R}\int_{\R}C_{K_{\delta,l}} \, \alpha \min\{r_{x_2},r_{y_2}\} |c_{x_2}-c_{y_2}| \, |x_2-y_2| \dd x_2\dd y_2 \geq C_\delta
\end{align*}
for some constant $C_\delta>0$, where $C_{K_{\delta,l}}$ is given by \eqref{eq:C_W}. Since $C_{\delta}$ is bounded away from zero uniformly in $\delta$, as in the proof of Proposition~\ref{prop:firstvar} we can pass to the limit as $\delta \to 0$ and obtain
	\[
		\frac{\dd}{\dd t} \Big|_{t=0} \nl(\Phi_t(\p)) = \lim_{\delta\to0}\frac{\dd}{\dd t} \Big|_{t=0} \nl_\delta(\Phi_t(\p))  > 0.
	\]
However, due to Proposition~\ref{prop:ricIvar2}, this contradicts the fact that $\p$ satisfies the condition \eqref{eq:tilting}. Therefore $\theta_1 = \theta_3$, i.e., $\p$ is isosceles. Repeating this argument for all pairs of angles, we get that $\theta_1=\theta_2=\theta_3$; hence, $\p$ is equilateral.
\end{proof}


\section{Overdetermined problem for quadrilaterals}\label{sec:squares}

In this section we prove Theorem~\ref{thm2} and Theorem~\ref{thm3} for quadrilaterals, i.e., in the case $N=4$. The proof exploits the same idea as in the triangle case, inspired by the arguments in \cite{CaHiVoYa2016}, and uses a continuous symmetrization to prove that the stationarity conditions corresponding to sliding and tilting first enforce the quadrilateral to be equilateral; and then, via a reflection argument, they imply that the polygon is also equiangular.

\begin{proof}[Proof of Theorem~\ref{thm2} in the case $N=4$]
Let $\p\in\pol_4$ be an arbitrary quadrilateral satisfying the conditions \eqref{eq:sliding} and \eqref{eq:tilting} such that the diagonal between $P_1$ and $P_3$ lies on the $x_1$-axis with $P_1$ on the negative $x_1$-axis, the midpoint of this diagonal coincides with the origin, and the vertex $P_2$ is in the upper half-plane. If $\p$ is not convex, we assume that the diagonal $\side{P_1}{P_3}$ is in the interior of the polygon. As in the case of a triangle, for any $x_2 \in \R$, we let $\p_{x_2}=\{x_1 \in \R \colon (x_1,x_2)\in\p\}\subset\R$. Then $\p_{x_2}=(c_{x_2}-r_{x_2},c_{x_2}+r_{x_2})$ for some $c_{x_2}\in\R$ and $r_{x_2}\geq 0$ denoting the center and the radius of the slice $\p_{x_2}$, respectively. Also, we define $d_2 \defeq \dist \big(P_2,\{x_1=0\}\big)$ and $d_4 \defeq \dist \big(P_4,\{x_1=0\}\big)$.

We assume by contradiction that $\alpha_2^- > \alpha_2^+$ where, as in the statement of Proposition~\ref{prop:ricIvar}, $\alpha_2^-$ and $\alpha_2^+$ are the angles between $\side{P_1}{P_2}$ and $\side{P_1}{P_3}$, and between $\side{P_1}{P_3}$ and $\side{P_2}{P_3}$, respectively (see Figure~\ref{fig:quad_carrillo}).
Let $\{\Phi_t\}_t$ be the flow defined by
\begin{equation}\label{eq:quad_flow}
\Phi_t(x_1,x_2) \defeq
\begin{cases}
\displaystyle (x_1+\beta^+ x_2\,t,x_2) & \text{if }x_2>0, \\[8pt]
\displaystyle (x_1- \beta^- x_2\,t,x_2) & \text{if }x_2<0,
\end{cases}
\end{equation}
where the constants $\beta^+,\beta^-\geq0$ are to be chosen later, so that
\begin{equation*}
\Phi_t(\p_{x_2}) =
\begin{cases}
\displaystyle \p_{x_2} + \beta^+ x_2\,t & \text{if }x_2>0, \\[8pt]
\displaystyle \p_{x_2} - \beta^- x_2\,t    & \text{if }x_2<0,
\end{cases}
\end{equation*}
and $\Phi_t(\p) = \big\{ (x_1,x_2) \colon x_1 \in \Phi_t(\p_{x_2}) \big\}$. Notice that the flow $\Phi_t$ is a superposition of the variations considered as in Definition~\ref{def:ricIvar} for $i=2,4$, with the two vertices moving with different velocities. One can check that
\begin{equation} \label{proof:quadrilaterals}
\frac{\dd}{\dd t} \Big|_{t=0} \nl_\delta(\Phi_t(\p)) = (\beta^+\ell_1\sin\alpha_2^-) I_2 - (\beta^-\ell_3\sin\alpha_4^-) I_4
\end{equation}
where $I_2$ and $I_4$ are the first variations computed in Proposition~\ref{prop:ricIvar} (here $\alpha_4^-$ is the angle between $\side{P_3}{P_4}$ and $\side{P_1}{P_3}$).

Again, we regularize the kernel by introducing a small parameter $\delta>0$ and take $K_\delta$ and $\nl_\delta$ as in \eqref{eq:Kdelta}. Set $K_{\delta,l}(r) \defeq K_\delta(\sqrt{l^2+r^2}) = K(\sqrt{l^2+r^2}+\delta)$ and note that it satisfies the assumptions of Lemma~\ref{lem:carrillo}. By Fubini's theorem, we write
	\[
		\nl_\delta(\Phi_t(\p)) = \int_{\R^2} \! \int_{\R^2} K_\delta(|x-y|)\chi_{\Phi_t(\p)}(x)\chi_{\Phi_t(\p)}(y)\dd x \dd y = \int_{\R}\!\int_{\R} \calI_{K_{\delta,l}}\big[\p_{x_2},\p_{y_2}\big](t) \dd x_2 \dd y_2,
	\]
where
	\begin{align*}
		\calI_{K_{\delta,l}}\big[\p_{x_2},\p_{y_2}\big](t) &\defeq \int_\R \! \int_\R K_{\delta,l}(x_1-y_1)\chi_{\Phi_t(\p_{x_2})}(x_1)\chi_{\Phi_t(\p_{y_2})}(y_1) \dd x_1 \dd y_1 \\
																						 &= \int_{-r_{x_2}}^{r_{x_2}}\!\int_{-r_{y_2}}^{r_{y_2}} K_{\delta,l}\big(x_1-y_1+c_{x_2}-c_{y_2}+\xi(x_2)t - \xi(y_2)t \big) \dd y_1 \dd x_1
	\end{align*}
with $l=|x_2-y_2|$ and
	\begin{equation} \label{eq:velocity}
	\xi(s)\defeq\begin{cases}
\beta^+ s 	& \text{if }s>0, \\
-\beta^- s   & \text{if }s<0.
\end{cases}
	\end{equation}

Now, differentiating the energy yields
	\begin{align*}
		\frac{\dd}{\dd t} \Big|_{t=0} \nl_\delta(\Phi_t(\p))  &= \int_{\R} \! \int_{\R} \frac{\dd}{\dd t}\Big|_{t=0} \calI_{K_{\delta,l}}\big[\p_{x_2},\p_{y_2}\big](t) \dd x_2 \dd y_2 \\
																							&=\int_0^{d_2}\!\int_0^{d_2} \frac{\dd}{\dd t}\Big|_{t=0} \calI_{K_{\delta,l}}\big[\p_{x_2},\p_{y_2}\big](t) \dd x_2 \dd y_2 \\
																							&\qquad + 2\int_0^{d_2}\!\int_{-d_4}^0 \frac{\dd}{\dd t}\Big|_{t=0} \calI_{K_{\delta,l}}\big[\p_{x_2},\p_{y_2}\big](t) \dd x_2 \dd y_2 \\
																							&\qquad\qquad + \int_{-d_4}^0\!\int_{-d_4}^0 \frac{\dd}{\dd t}\Big|_{t=0} \calI_{K_{\delta,l}}\big[\p_{x_2},\p_{y_2}\big](t) \dd x_2 \dd y_2.
	\end{align*}
By using Lemma~\ref{lem:carrillo} to estimate the first integral (since $\beta^+\geq0$) and the identity
	\[
		\frac{\dd}{\dd t}\Big|_{t=0} \calI_{K_{\delta,l}}\big[\p_{x_2},\p_{y_2}\big](t) = \big(\xi(x_2)-\xi(y_2)\big)\int_{-r_{x_2}}^{r_{x_2}}\!\int_{-r_{y_2}}^{r_{y_2}} K_{\delta,l}^\pr\big((x_1-y_1)+(c_{x_2}-c_{y_2})\big)\dd y_1 \dd x_1
	\]
to rewrite the second integral, we have that
\begin{align}
\frac{\dd}{\dd t} &\Big|_{t=0} \nl_\delta(\Phi_t(\p)) \notag\\
& \geq \beta^+ \int_0^{d_2}\!\int_0^{d_2} C_{K_{\delta,l}} \min\{r_{x_2},r_{y_2}\} |c_{x_2}-c_{y_2}|\, |x_2-y_2|\dd x_2 \dd y_2  \notag \\
&\quad\quad +2\int_0^{d_2}\!\int_{-d_4}^0 (\beta^+ x_2 + \beta^- y_2) \int_{-r_{x_2}}^{r_{x_2}}\!\int_{-r_{y_2}}^{r_{y_2}} K_{\delta,l}^\pr\big((x_1-y_1)+(c_{x_2}-c_{y_2})\big)\dd y_1 \dd x_1 \dd y_2 \dd x_2 \notag\\
&\quad\quad\quad +\int_{-d_4}^0\!\int_{-d_4}^0 \frac{\dd}{\dd t}\Big|_{t=0} \calI_{K_{\delta,l}}\big[\p_{x_2},\p_{y_2}\big](t) \dd x_2 \dd y_2 \label{eq:first_var_lb}
\end{align}
where $C_{K_{\delta,l}}$ is given by \eqref{eq:C_W}.

Now, let for $x_2\in[0,d_2]$ and $y_2\in[-d_4,0]$
	\[
		\calI_\delta(x_2,y_2) \defeq (\beta^+ x_2 + \beta^- y_2) \int_{-r_{x_2}}^{r_{x_2}}\!\int_{-r_{y_2}}^{r_{y_2}} K_{\delta,l}^\pr\big((x_1-y_1)+(c_{x_2}-c_{y_2})\big)\dd y_1 \dd x_1.
	\]
We will show that $\calI_\delta(x_2,y_2)\geq0$ for some $\beta^+,\, \beta^- \geq 0$. In order to achieve this estimate we distinguish between two cases: (i) $P_4$ lies in the fourth quadrant of the $(x_1,x_2)$-plane, or (ii) $P_4$ lies in the third quadrant of the $(x_1,x_2)$-plane (see Figure \ref{fig:quad_carrillo}).

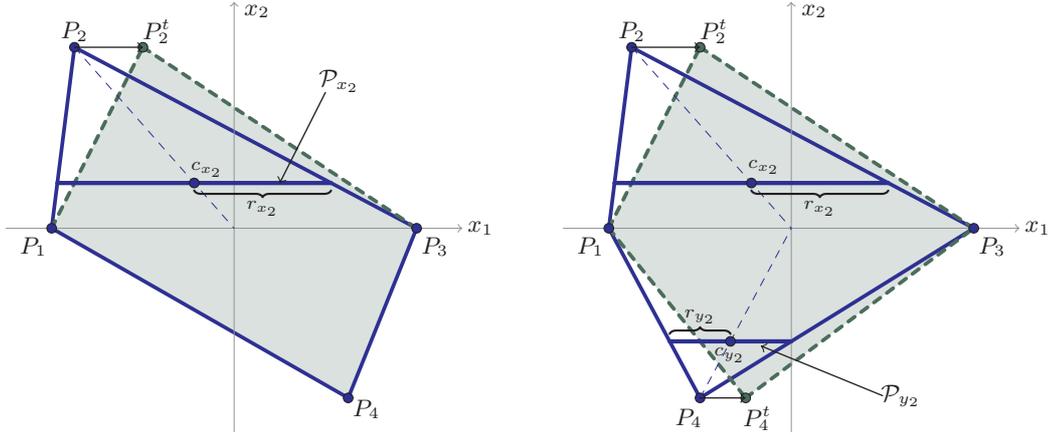
\begin{figure}[ht]
\definecolor{qqqqff}{rgb}{0,0,1}
\definecolor{qqwuqq}{rgb}{0,0.4,0}
\begin{tikzpicture}[scale=0.6,line cap=round,line join=round]
\clip(-6,-4.5) rectangle (6,5);
\fill[line width=0pt,fill=qqwuqq,fill opacity=0.2] (-4,0) -- (-2,4) -- (4,0) -- (2.5,-3.75) -- cycle;
\draw [line width=1.4pt,color=qqqqff] (-4,0)-- (-3.5,4);
\draw [line width=1.4pt,color=qqqqff] (-3.5,4)-- (4,0);
\draw [line width=1.4pt,color=qqqqff] (4,0)-- (2.5,-3.75);
\draw [line width=1.4pt,color=qqqqff] (2.5,-3.75)-- (-4,0);
\draw [line width=1.4pt,dash pattern=on 3pt off 3pt,color=qqwuqq] (-4,0)-- (-2,4);
\draw [line width=1.4pt,dash pattern=on 3pt off 3pt,color=qqwuqq] (-2,4)-- (4,0);
\draw [->,line width=0.3pt,color=black!50] (-5,0)-- (5,0);
\draw [->,line width=0.3pt,color=black!50] (0,-5)-- (0,5);
\draw [line width=0.3pt,dash pattern=on 3pt off 3pt,color=qqqqff] (-3.5,4) -- (0,0);
\draw [line width=1.6pt,color=qqqqff] (-31/8,1) -- (17/8,1);
\begin{footnotesize}
\draw (5.4,0) node {$x_1$};
\draw (0.5,4.8) node {$x_2$};
\draw [fill=qqqqff] (-4,0) circle (3pt);
\draw (-4.4,-0.4) node {$P_1$};
\draw [fill=qqqqff] (4,0) circle (3pt);
\draw (4.4,-0.4) node {$P_3$};
\draw [fill=qqqqff] (-3.5,4) circle (3pt);
\draw (-3.5,4.35) node {$P_2$};
\draw [fill=qqqqff] (2.5,-3.75) circle (3pt);
\draw (2.9,-4) node {$P_4$};
\draw [fill=qqqqff] (-7/8,1) circle (3pt);
\draw (-0.6,1.3) node {{\tiny $c_{x_2}$}};
\draw [fill=qqwuqq] (-2,4) circle (3pt);
\draw (-1.7,4.35) node {$P_2^t$};
\draw (2.3,3.25) node {$\p_{x_2}$};
\draw [
    thick,
    decoration={
        brace,
        mirror,
        raise=0.1cm
    },
    decorate
] (-7/8,1) -- (17/8,1) 
node [pos=0.5,anchor=north,yshift=-0.12cm] {{\tiny $r_{x_2}$}}; 
\draw [->,line width=0.5pt] (2,3) -- (1,1);
\end{footnotesize}
\draw [->,line width=0.5pt] (-3.5,4)-- (-2,4);
\end{tikzpicture}
\begin{tikzpicture}[scale=0.6,line cap=round,line join=round]
\clip(-6,-4.5) rectangle (6,5);
\fill[line width=0pt,fill=qqwuqq,fill opacity=0.2] (-4,0) -- (-2,4) -- (4,0) -- (-1,-3.75) -- cycle;
\draw [line width=1.4pt,color=qqqqff] (-4,0)-- (-3.5,4);
\draw [line width=1.4pt,color=qqqqff] (-3.5,4)-- (4,0);
\draw [line width=1.4pt,color=qqqqff] (4,0)-- (-2,-3.75);
\draw [line width=1.4pt,color=qqqqff] (-2,-3.75)-- (-4,0);
\draw [line width=1.4pt,dash pattern=on 3pt off 3pt,color=qqwuqq] (-4,0)-- (-2,4);
\draw [line width=1.4pt,dash pattern=on 3pt off 3pt,color=qqwuqq] (-2,4)-- (4,0);
\draw [line width=1.4pt,dash pattern=on 3pt off 3pt,color=qqwuqq] (-4,0)-- (-1,-3.75);
\draw [line width=1.4pt,dash pattern=on 3pt off 3pt,color=qqwuqq] (-1,-3.75)-- (4,0);
\draw [->,line width=0.3pt,color=black!50] (-5,0)-- (5,0);
\draw [->,line width=0.3pt,color=black!50] (0,-5)-- (0,5);
\draw [line width=0.3pt,dash pattern=on 3pt off 3pt,color=qqqqff] (-3.5,4) -- (0,0);
\draw [line width=0.3pt,dash pattern=on 3pt off 3pt,color=qqqqff] (-2,-3.75) -- (0,0);
\draw [line width=1.6pt,color=qqqqff] (-31/8,1) -- (17/8,1);
\draw [line width=1.6pt,color=qqqqff] (-8/3,-2.5) -- (0,-2.5);
\begin{footnotesize}
\draw (5.4,0) node {$x_1$};
\draw (0.5,4.8) node {$x_2$};
\draw [fill=qqqqff] (-4,0) circle (3pt);
\draw (-4.4,-0.4) node {$P_1$};
\draw [fill=qqqqff] (4,0) circle (3pt);
\draw (4.4,-0.4) node {$P_3$};
\draw [fill=qqqqff] (-3.5,4) circle (3pt);
\draw (-3.5,4.35) node {$P_2$};
\draw [fill=qqqqff] (-2,-3.75) circle (3pt);
\draw (-2.25,-4.2) node {$P_4$};
\draw [fill=qqwuqq] (-1,-3.75) circle (3pt);
\draw (-0.75,-4.2) node {$P_4^t$};
\draw [fill=qqqqff] (-7/8,1) circle (3pt);
\draw (-0.6,1.35) node {{\tiny $c_{x_2}$}};
\draw [fill=qqqqff] (-8/6,-2.5) circle (3pt);
\draw (-1.35,-2.8) node {{\tiny $c_{y_2}$}};
\draw [fill=qqwuqq] (-2,4) circle (3pt);
\draw (-1.7,4.35) node {$P_2^t$};
\draw (2.4,-3.7) node {$\p_{y_2}$};
\draw [
    thick,
    decoration={
        brace,
        mirror,
        raise=0.1cm
    },
    decorate
] (-7/8,1) -- (17/8,1) 
node [pos=0.5,anchor=north,yshift=-0.12cm] {{\tiny $r_{x_2}$}}; 
\draw [
    thick,
    decoration={
        brace,
        raise=0.1cm
    },
    decorate
] (-8/3,-2.5) -- (-8/6,-2.5) 
node [pos=0.5,anchor=north,yshift=0.55cm] {{\tiny $r_{y_2}$}}; 
\draw [->,line width=0.5pt] (2,-3.7) -- (-4/6,-2.6);
\end{footnotesize}
\draw [->,line width=0.5pt] (-3.5,4)-- (-2,4);
\draw [->,line width=0.5pt] (-2,-3.75)-- (-1,-3.75);
\end{tikzpicture}
\caption{The variation considered in the proof of Theorem~\ref{thm2}, $N=4$: Case~(i) (left) and Case~(ii) (right).}\label{fig:quad_carrillo}
\end{figure}

\medskip\noindent \emph{Case (i).} ($P_4$ lies in the fourth quadrant of the $(x_1,x_2)$-plane.) Since $\alpha_2^- > \alpha_2^+$ by assumption, for $x_2>0$ the center of the slice $\p_{x_2}$ is given by $c_{x_2}=\zeta x_2$ where $\zeta<0$ is the slope of the line passing through the origin and the vertex $P_2$. We choose $\beta^+ = -\zeta >0$ and $\beta^-=0$ in \eqref{eq:quad_flow}. Note that, since $P_2$ and $P_4$ are on the opposite sides of the $x_2$-axis, we have that $c_{x_2}-c_{y_2}<0$.

As in the proof of Lemma~\ref{lem:carrillo}, we have
	\[
		\calI_\delta(x_2,y_2) = \beta^+ x_2 \iint_R K^\pr_{\delta,l}(x_1-y_1) \dd x_1 \dd y_1,
	\]
where
	\begin{equation} \label{eq:carrillo_rectangle1}
		R \defeq \big[-r_{x_2}+c_{x_2}-c_{y_2},r_{x_2}+c_{x_2}-c_{y_2}\big] \times \big[ -r_{y_2},r_{y_2} \big].
	\end{equation}
Since $R$ is a rectangle centered at $(\frac{c_{x_2}-c_{y_2}}{2},0)$ and $c_{x_2}-c_{y_2}<0$, we have that for every $h>0$
\begin{equation} \label{eq:slices}
\mathcal{L}^1\bigl( R\cap\{y_1=x_1+h\}\bigr) \geq \mathcal{L}^1\bigl(R\cap\{y_1=x_1-h\}\bigr).
\end{equation}
Therefore
\begin{align*}
\calI_\delta(x_2,y_2)
& = \frac{\beta^+ x_2}{\sqrt{2}} \int_0^{+\infty} \biggl( \int_{R\cap\{y_1=x_1+h\}}K^\pr_{\delta,l}(-h)\dd\mathcal{L}^1 + \int_{R\cap\{y_1=x_1-h\}}K^\pr_{\delta,l}(h)\dd\mathcal{L}^1 \biggr) \dd h \\
& = \frac{\beta^+ x_2}{\sqrt{2}} \int_0^{+\infty} K^\pr_{\delta,l}(h) \Bigl( \mathcal{L}^1(R\cap\{y_1=x_1-h\}) - \mathcal{L}^1(R\cap\{y_1=x_1+h\}) \Bigr) \dd h \\
& \geq0,
\end{align*}
by \eqref{eq:slices} and the fact that $K^\pr_{\delta,l}(h)<0$ for $h>0$.

\medskip\noindent \emph{Case (ii).} ($P_4$ lies in the third quadrant of the $(x_1,x_2)$-plane.) Now, given any $x_2\in \R$, the center of the slice $\p_{x_2}$ is given by $c_{x_2}=\zeta x_2$ if $x_2>0$ and by $c_{x_2}=\eta x_2$ if $x_2<0$, where $\zeta<0$ and $\eta>0$ are two constants given by the slopes of the lines passing through the origin and the vertices $P_2$ and $P_4$, respectively. In this case we choose $\beta^+ = -\zeta >0$ and $\beta^-=\eta>0$ in \eqref{eq:quad_flow}, and, as before, rewrite $\calI_\delta(x_2,y_2)$ as 
	\[
		\calI_\delta(x_2,y_2) = (\beta^+ x_2 + \beta^- y_2) \iint_R K^\pr_{\delta,l}(x_1-y_1) \dd x_1 \dd y_1,
	\]
where $R$ is the rectangle defined by \eqref{eq:carrillo_rectangle1}.

Suppose $\beta^+ x_2 + \beta^- y_2>0$. Then
$$
c_{x_2}-c_{y_2}=\zeta x_2 - \eta y_2 = -(\beta^+ x_2+\beta^- y_2)<0,
$$
and, as in the previous case, $R$ is a rectangle centered on the negative $x_1$-axis, hence we get that $\calI_\delta(x_2,y_2)\geq0$.

Suppose $\beta^+ x_2 + \beta^- y_2 <0$. Then $c_{x_2}-c_{y_2}>0$, and therefore the center of the rectangle $R$ is on the positive $x_1$-axis: it follows that for every $h>0$
\begin{equation*}
\mathcal{L}^1\bigl( R\cap\{y_1=x_1+h\}\bigr) \leq \mathcal{L}^1\bigl(R\cap\{y_1=x_1-h\}\bigr).
\end{equation*}
Hence we get that
\begin{equation*}
\int_0^{+\infty} K^\pr_{\delta,l}(h) \Bigl( \mathcal{L}^1(R\cap\{y_1=x_1-h\}) - \mathcal{L}^1(R\cap\{y_1=x_1+h\}) \Bigr) \dd h \leq 0,
\end{equation*}
and since $\beta^+ x_2 + \beta^- y_2<0$, we conclude that $\calI_\delta(x_2,y_2)\geq0$.

\medskip\noindent\emph{Conclusion.} We proved that in both cases, for a suitable choice of $\beta^+>0$ and $\beta^- \geq 0$, we have $\calI_\delta(x_2,y_2) \geq 0$ for every $x_2\in[0,d_2]$ and $y_2\in[-d_4,0]$.

Going back to \eqref{eq:first_var_lb} we obtain that
\begin{align*}
\frac{\dd}{\dd t} \Big|_{t=0} \nl_\delta(\Phi_t(\p))
& \geq \beta^+ \int_0^{d_2}\!\int_0^{d_2} C_{K_{\delta,l}} \min\{r_{x_2},r_{y_2}\} |c_{x_2}-c_{y_2}|\, |x_2-y_2|\dd x_2 \dd y_2 \\
&\quad\quad\quad +\int_{-d_4}^0\!\int_{-d_4}^0 \frac{\dd}{\dd t}\Big|_{t=0} \calI_{K_{\delta,l}}\big[\p_{x_2},\p_{y_2}\big](t) \dd x_2 \dd y_2 .
\end{align*}
Concerning the second integral above, it is sufficient to observe that it is equal to zero in Case~(i) (since $\beta^-=0$), and it is nonnegative in Case~(ii) as a consequence of Lemma~\ref{lem:carrillo} (since $\beta^-\geq0$). Therefore, recalling the definition \eqref{eq:C_W} of $C_{K_{\delta,l}}$, we have
	\[
		\frac{\dd}{\dd t} \Big|_{t=0} \nl_\delta(\Phi_t(\p)) \geq C_\delta > 0,
	\]
for a constant $C_\delta$ bounded away from zero uniformly in $\delta$. Again, as in Proposition~\ref{prop:firstvar}, we can pass to the limit $\delta\to 0$ and get that $\frac{\dd}{\dd t} \big|_{t=0} \nl(\Phi_t(\p)) > 0$. However, this contradicts \eqref{proof:quadrilaterals}, since in view of Proposition~\ref{prop:ricIvar2} we have $I_2=I_4=0$. This proves that $\alpha_2^-=\alpha_2^+$.

By swapping $P_2$ and $P_4$ in the arguments above yields that, in fact, $\theta_1=\theta_3$. Now, repeating the same arguments for the vertices $P_1$ and $P_3$ (that is, taking the diagonal $\side{P_2}{P_4}$ as the direction of symmetrization), we obtain that $\theta_2=\theta_4$, i.e., that $\p$ is a rhombus.

\begin{figure}[ht]
\definecolor{qqqqff}{rgb}{0,0,1}
\definecolor{qqwuqq}{rgb}{0,0.4,0}
\begin{tikzpicture}[scale=0.3,line cap=round,line join=round]
\clip(-12.5,-1) rectangle (12.5,13.5);
\fill[line width=0pt,fill=qqwuqq,fill opacity=0.2] (-6.5,0) -- (-11.5,12) -- (-1.5,12) -- cycle;
\fill[line width=0pt,fill=qqqqff,fill opacity=0.2] (6.5,0) -- (1.5,12) -- (11.5,12) -- cycle;
\draw [line width=1.4pt,color=qqqqff] (-6.5,0)-- (-1.5,12) -- (11.5,12) -- (6.5,0) -- (-6.5,0);
\draw [line width=1.4pt,dash pattern=on 3pt off 3pt,color=qqwuqq] (-6.5,0)-- (-11.5,12) --(1.5,12) -- (6.5,0);
\draw [->,line width=0.3pt,color=black!50] (-10,0)-- (10,0);
\draw [->,line width=0.3pt,color=black!50] (0,-0.5)-- (0,13.5);
\begin{tiny}
\draw (10.5,-0.2) node {$x_1$};
\draw (0.7,13.2) node {$x_2$};
\draw [fill=qqqqff] (-6.5,0) circle (3pt);
\draw (-6.4,-0.6) node {$P_1$};
\draw [fill=qqqqff] (6.5,0) circle (3pt);
\draw (6.4,-0.6) node {$P_4$};
\draw [fill=qqqqff] (-1.5,12) circle (3pt);
\draw (-2,12.6) node {$P_2$};
\draw [fill=qqqqff] (11.5,12) circle (3pt);
\draw (12,12.6) node {$P_3$};
\draw [fill=qqwuqq] (-11.5,12) circle (3pt);
\draw (-12,12.6) node {$\tld{P}_4$};
\draw [fill=qqwuqq] (1.5,12) circle (3pt);
\draw (2,12.6) node {$\tld{P}_2$};
\draw [fill=qqqqff] (0,0) circle (3pt);
\draw (0.7,-0.6) node {$M_4$};
\end{tiny}
\begin{footnotesize}
\draw [color=qqwuqq] (-6.4,6.5) node {$\tld{D}$};
\draw [color=qqqqff] (6.4,6.5) node {$D$};
\end{footnotesize}
\end{tikzpicture}
\caption{A reflection argument shows that if $\p$ is rhombus and satisfies \eqref{eq:tilting} then $\p$ has to be a square. Here the reflection of $\p$ in the $x_2$-axis is the rhombus $\tld{\p}$ depicted with the dashed lines.}\label{fig:rhombus}
\end{figure}
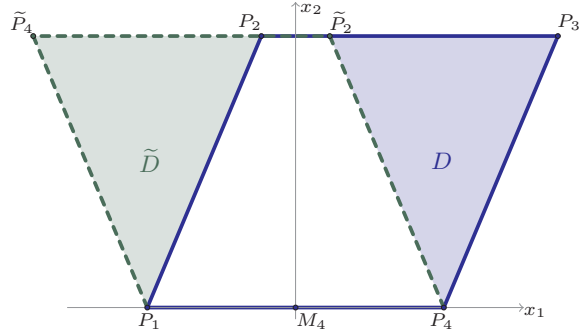

Finally, we are going to use a reflection argument similar to the one in the first proof of Theorem~\ref{thm2} in the case $N=3$ in order to conclude that $\p$ is a square. Since $\nl$ is invariant under rigid transformations, suppose the side $\side{P_1}{P_4}$ lies on the $x_1$-axis and the midpoint $M_4$ coincides with the origin. Suppose $\theta_1<\theta_4$. Let $\tld{\p}$ denote the reflection of $\p$ with respect to the $x_2$-axis, and define the sets $D \defeq \p \setminus \tld{\p}$ and $\tld{D} \defeq \tld{\p} \setminus \p$ (see Figure \ref{fig:rhombus}). Let $x\in \side{M_4}{P_4}$ and denote by $\tld{x} \in \side{P_1}{M_4}$ the reflection of $x$ in the $x_2$-axis. Then the same calculation as in \eqref{eq:reflection_calc} shows that	$v_{\tld{\p}}(x)  - v_{\p}(x)< 0$. Again, multiplying both sides by $|x-M_4|$ and integrating, then, yields a contradicts with the condition \eqref{eq:tilting}; hence, $\theta_1=\theta_4$, and we conclude that $\p$ is a square.
\end{proof}

\begin{proof}[Proof of Theorem~\ref{thm3} in the case $N=4$]
By repeating the proof of Theorem~\ref{thm2}, assuming by contradiction that $\alpha_2^->\alpha_2^+$ we obtain that the first variation \eqref{proof:quadrilaterals} is strictly positive, namely
\begin{equation} \label{proof:quadrilaterals2}
(\beta^+\ell_1\sin\alpha_2^-) I_2 - (\beta^-\ell_3\sin\alpha_4^-)I_4 > 0.
\end{equation}
We distinguish between Case~(i) and Case~(ii), as before.
In Case~(i) we had $\beta^-=0$ and $\beta^+>0$, therefore we deduce from \eqref{proof:quadrilaterals2} that $I_2>0$. However, this contradicts the conclusion of Proposition~\ref{prop:ricIvar2}, which yields $I_2 = 2\bar{\sigma}\bigl(\cos\alpha_2^- - \cos\alpha_2^+\bigr)<0$ since by assumption $\alpha_2^->\alpha_2^+$.
Case~(ii) corresponds to the assumption $\alpha_4^-<\alpha_4^+$. In this case, again by Proposition~\ref{prop:ricIvar2} we have
\begin{align*}
0 & \xupref{proof:quadrilaterals2}{<} (\beta^+\ell_1\sin\alpha_2^-) I_2 - (\beta^-\ell_3\sin\alpha_4^-)I_4 \\
& \xupref{stationarity-ric2}{=} 2\bar{\sigma}(\beta^+\ell_1\sin\alpha_2^-) (\cos\alpha_2^- - \cos\alpha_2^+) - 2\bar{\sigma}(\beta^-\ell_3\sin\alpha_4^-)(\cos\alpha_4^- - \cos\alpha_4^+) <0
\end{align*}
since $\alpha_2^->\alpha_2^+$ and $\alpha_4^-<\alpha_4^+$. Therefore $\alpha_2^-=\alpha_2^+$ and, by repeating the argument for the other pairs of sides, we obtain that $\p$ must be a rhombus.

The conclusion follows now by the same reflection argument as in the proof of Theorem~\ref{thm2}: assuming $\theta_1<\theta_4$, we obtain
$$
\int_{\side{P_4}{M_4}}v_\p(x)|x-M_4|\dd\Hone(x) - \int_{\side{P_1}{M_4}}v_\p(x)|x-M_4|\dd\Hone(x)>0.
$$
However, by \eqref{eq:tilting2} the previous quantity is equal to $\frac{\bar{\sigma}\ell_4}{2}\bigl(\psi(\theta_4)-\psi(\theta_1)\bigr)$, which is negative since $\theta_1<\theta_4$ and $\psi$ is decreasing. This contradiction proves that $\theta_1=\theta_4$ and therefore $\p$ is a square.
\end{proof}


\subsection*{Acknowledgments}
The authors would like to thank Ilaria Fragalà and Gian Paolo Leonardi for interesting comments. MB is member of 2020 INdAM - GNAMPA project \textit{Variational Analysis of nonlocal models in applied science}.

\appendix
\section{First variation of the nonlocal energy}\label{sec:Appendix}

\begin{proof}[Proof of Proposition \ref{prop:firstvar}]
The proof follows the same strategy used in \cite{BoCr14} to compute the first variation in the particular case of a Riesz kernel. We regularize the kernel by introducing a small parameter $\delta>0$ and by setting
\begin{equation} \label{eq:Kdelta}
K_\delta(r)\defeq K(r+\delta), \qquad\nl_\delta(E)\defeq \int_E\int_E K_\delta(|x-y|)\dd x \dd y,
\end{equation}
so that $K_\delta\in C^1([0,+\infty))$. 

We let $\Phi_t(x)\defeq\Phi(x,t)$ and $J\Phi_t(x)\defeq \det(D\Phi_t(x))$ denote the Jacobian of the map $\Phi_t$. Since the regularized kernel $K_\delta$ is of class $C^1$ and the flow $\Phi$ is of class $C^2$, the map $t \mapsto K_\delta(|\Phi_t(x)-\Phi_t(y)|)J\Phi_t(x)J\Phi_t(y)$ is $C^1$, and its derivative is uniformly bounded on the bounded set $E\times E$ for every small $t$. Therefore we can bring the derivative inside the integral and all the following computations are justified. By a change of variables we obtain for $t\in(-\bar{t},\bar{t})$
\begin{align*}
\frac{\dd}{\dd t}\nl_\delta& (\Phi_t(E))
= \frac{\dd}{\dd t}\int_E\int_E K_\delta(|\Phi_t(x)-\Phi_t(y)|)J\Phi_t(x)J\Phi_t(y)\dd x \dd y \\
& = 2\int_E\int_E K_\delta(|\Phi_t(x)-\Phi_t(y)|)J\Phi_t(x)\frac{\partial J\Phi_t}{\partial t}(y)\dd x\dd y \\
& \hskip.5cm + 2\int_E\int_E K'_\delta(|\Phi_t(x)-\Phi_t(y)|)\frac{\Phi_t(x)-\Phi_t(y)}{|\Phi_t(x)-\Phi_t(y)|}\cdot\frac{\partial\Phi(x,t)}{\partial t} \, J\Phi_t(x)J\Phi_t(y)\dd x \dd y \\
& = 2\int_E\int_E K_\delta(|\Phi_t(x)-\Phi_t(y)|)J\Phi_t(x)\frac{\partial J\Phi_t}{\partial t}(y)\dd x\dd y \\
& \hskip.5cm + 2\int_E\int_E \Bigl[ \nabla_x\bigl(K_\delta(|\Phi_t(x)-\Phi_t(y)|)\bigr)\bigl(D\Phi_t(x)\bigr)^{-1}\Bigr] \cdot\frac{\partial\Phi(x,t)}{\partial t} \, J\Phi_t(x)J\Phi_t(y)\dd x \dd y.
\end{align*}
Now integrating by parts in the last integral we obtain
\begin{align*}
\frac{\dd}{\dd t}\nl_\delta& (\Phi_t(E))
= 2\int_E\int_E K_\delta(|\Phi_t(x)-\Phi_t(y)|)J\Phi_t(x)\frac{\partial J\Phi_t}{\partial t}(y)\dd x\dd y \\
& - 2\int_E\int_E K_\delta(|\Phi_t(x)-\Phi_t(y)|) \dive_x\Bigl( \frac{\partial\Phi(x,t)}{\partial t}\bigl(D\Phi_t(x)\bigr)^{-T}J\Phi_t(x)J\Phi_t(y)\Bigr)\dd x \dd y \\
& + 2\int_E\int_{\partial E} K_\delta(|\Phi_t(x)-\Phi_t(y)|) \frac{\partial\Phi(x,t)}{\partial t}\bigl(D\Phi_t(x)\bigr)^{-T}\cdot\nu_E(x)J\Phi_t(x)J\Phi_t(y)\dd\Hone(x) \dd y ,
\end{align*}
that is 
\begin{equation*}
\begin{split}
\frac{\dd}{\dd t}\nl_\delta (\Phi_t(E))
& = \int_E\int_E K_\delta(|\Phi_t(x)-\Phi_t(y)|)h_1(x,y,t)\dd x \dd y \\
& \qquad + \int_E\int_{\partial E}K_\delta(|\Phi_t(x)-\Phi_t(y)|)h_2(x,y,t)\dd\Hone(x)\dd y,
\end{split}
\end{equation*}
where we set
\begin{equation*}
h_1(x,y,t)\defeq 2J\Phi_t(x)\frac{\partial J\Phi_t}{\partial t}(y) - 2 \dive_x\Bigl( \frac{\partial\Phi(x,t)}{\partial t}\bigl(D\Phi_t(x)\bigr)^{-T}J\Phi_t(x)J\Phi_t(y)\Bigr),
\end{equation*}
\begin{equation*}
h_2(x,y,t)\defeq 2\frac{\partial\Phi(x,t)}{\partial t}\bigl(D\Phi_t(x)\bigr)^{-T}\cdot\nu_E(x)J\Phi_t(x)J\Phi_t(y).
\end{equation*}
By using the definition \eqref{eq:Kdelta} of $K_\delta$ and the fact that the functions $h_1(x,y,t)$ and $h_2(x,y,t)$ are uniformly bounded, one can then show that $\nl_\delta(\Phi_t(E))\to\nl(\Phi_t(E))$ and
\begin{equation*}
\begin{split}
\frac{\dd}{\dd t}\nl_\delta (\Phi_t(E)) \to H(t) & \defeq \int_E\int_E K(|\Phi_t(x)-\Phi_t(y)|)h_1(x,y,t)\dd x \dd y \\
& \qquad + \int_E\int_{\partial E}K(|\Phi_t(x)-\Phi_t(y)|)h_2(x,y,t)\dd\Hone(x)\dd y,
\end{split}
\end{equation*}
as $\delta\to0$, uniformly with respect to $t\in[-\bar{t},\bar{t}]$. Therefore we conclude that
\begin{equation*}
\frac{\dd}{\dd t}\Big|_{t=0} \nl (\Phi_t(E)) = H(0) = 2\int_E\int_{\partial E}K(|x-y|) X(x)\cdot\nu_E(x)\dd \Hone(x)\dd y,
\end{equation*}
where we used the Taylor expansion $\Phi_t(x)=x+tX(x)+o(t)$, from which it follows, in particular, the identity $\frac{\partial J\Phi_t}{\partial t}|_{t=0}=\dive X$.
\end{proof}

\bibliographystyle{IEEEtranS}
\def\url#1{}
\bibliography{references}

\begin{thebibliography}{10}
\providecommand{\url}[1]{#1}
\csname url@samestyle\endcsname
\providecommand{\newblock}{\relax}
\providecommand{\bibinfo}[2]{#2}
\providecommand{\BIBentrySTDinterwordspacing}{\spaceskip=0pt\relax}
\providecommand{\BIBentryALTinterwordstretchfactor}{4}
\providecommand{\BIBentryALTinterwordspacing}{\spaceskip=\fontdimen2\font plus
\BIBentryALTinterwordstretchfactor\fontdimen3\font minus
  \fontdimen4\font\relax}
\providecommand{\BIBforeignlanguage}[2]{{%
\expandafter\ifx\csname l@#1\endcsname\relax
\typeout{** WARNING: IEEEtranS.bst: No hyphenation pattern has been}%
\typeout{** loaded for the language `#1'. Using the pattern for}%
\typeout{** the default language instead.}%
\else
\language=\csname l@#1\endcsname
\fi
#2}}
\providecommand{\BIBdecl}{\relax}
\BIBdecl

\bibitem{BoCr14}
M.~Bonacini and R.~Cristoferi, ``Local and global minimality results for a
  nonlocal isoperimetric problem on {$\mathbb{R}\sp N$},'' \emph{SIAM J. Math.
  Anal.}, vol.~46, no.~4, pp. 2310--2349, 2014.

\bibitem{BoCrTo20}
M.~Bonacini, R.~Cristoferi, and I.~Topaloglu, ``Minimality of polytopes in a
  nonlocal anisotropic isoperimetric problem,'' \emph{Nonlinear Anal.}, vol.
  205, pp. 112\,223, 19, 2021.

\bibitem{Bro95}
F.~Brock, ``Continuous {S}teiner-symmetrization,'' \emph{Math. Nachr.}, vol.
  172, pp. 25--48, 1995.

\bibitem{BuFr16}
\BIBentryALTinterwordspacing
D.~Bucur and I.~Fragal\`a, ``A {F}aber-{K}rahn inequality for the {C}heeger
  constant of {$N$}-gons,'' \emph{J. Geom. Anal.}, vol.~26, no.~1, pp. 88--117,
  2016.  \url{https://doi.org/10.1007/s12220-014-9539-5}
\BIBentrySTDinterwordspacing

\bibitem{BucFra21}
------, ``Symmetry results for variational energies on convex polygons,''
  \emph{ESAIM Control Optim. Calc. Var.}, vol.~27, pp. Paper No. 3, 16, 2021.

\bibitem{Bu}
\BIBentryALTinterwordspacing
A.~Burchard, ``A short course on rearrangement inequalities,'' 2009, lecture
  notes available at http://www.math.utoronto.ca/almut/rearrange.pdf.
  \url{"http://www.math.utoronto.ca/almut/rearrange.pdf"}
\BIBentrySTDinterwordspacing

\bibitem{CaHiVoYa2016}
\BIBentryALTinterwordspacing
J.~A. Carrillo, S.~Hittmeir, B.~Volzone, and Y.~Yao, ``Nonlinear
  aggregation-diffusion equations: radial symmetry and long time asymptotics,''
  \emph{Invent. Math.}, vol. 218, no.~3, pp. 889--977, 2019.
  \url{https://doi.org/10.1007/s00222-019-00898-x}
\BIBentrySTDinterwordspacing

\bibitem{ChNeuTo20}
R.~Choksi, R.~Neumayer, and I.~Topaloglu, ``Anisotropic liquid drop models,''
  \emph{Adv. Calc. Var.}, to appear.

\bibitem{Fraenkel00}
\BIBentryALTinterwordspacing
L.~E. Fraenkel, \emph{An introduction to maximum principles and symmetry in
  elliptic problems}, ser. Cambridge Tracts in Mathematics.\hskip 1em plus
  0.5em minus 0.4em\relax Cambridge University Press, Cambridge, 2000, vol.
  128.  \url{https://doi.org/10.1017/CBO9780511569203}
\BIBentrySTDinterwordspacing

\bibitem{FrGaLa13}
\BIBentryALTinterwordspacing
I.~Fragal\`a, F.~Gazzola, and J.~Lamboley, ``Sharp bounds for the {$p$}-torsion
  of convex planar domains,'' in \emph{Geometric properties for parabolic and
  elliptic {PDE}'s}, ser. Springer INdAM Ser.\hskip 1em plus 0.5em minus
  0.4em\relax Springer, Milan, 2013, vol.~2, pp. 97--115.
  \url{https://doi.org/10.1007/978-88-470-2841-8_7}
\BIBentrySTDinterwordspacing

\bibitem{FraVel19}
I.~Fragal\`a and B.~Velichkov, ``Serrin-type theorems for triangles,''
  \emph{Proc. Amer. Math. Soc.}, vol. 147, no.~4, pp. 1615--1626, 2019.

\bibitem{GomezSerrano_et_al_19}
\BIBentryALTinterwordspacing
J.~Gómez-Serrano, J.~Park, J.~Shi, and Y.~Yao, ``{Symmetry in stationary and
  uniformly rotating solutions of active scalar equations},'' \emph{Duke
  Mathematical Journal}, vol. 170, no.~13, pp. 2957 -- 3038, 2021.
  \url{https://doi.org/10.1215/00127094-2021-0002}
\BIBentrySTDinterwordspacing

\bibitem{Leo15}
G.~P. Leonardi, ``Il mistero isoperimetrico di {Z}enodoro,''
  \emph{\textnormal{in} Vedere la matematica\ldots alla maniera di Mimmo
  Luminati \textnormal{(book chapter)}}, pp. 104--119, 2015.

\bibitem{Li2}
E.~Lieb, ``Existence and uniqueness of the minimizing solution of {C}hoquard's
  nonlinear equation.'' \emph{Studies in Appl. Math.}, vol.~57, pp. 93--105,
  1977.

\bibitem{LiLo}
E.~Lieb and M.~Loss, \emph{Analysis}, 2nd~ed., ser. Graduate Studies in
  Mathematics.\hskip 1em plus 0.5em minus 0.4em\relax Providence, RI: American
  Mathematical Society, 2001, vol.~14.

\bibitem{LuZhu12}
\BIBentryALTinterwordspacing
G.~Lu and J.~Zhu, ``An overdetermined problem in {R}iesz-potential and
  fractional {L}aplacian,'' \emph{Nonlinear Anal.}, vol.~75, no.~6, pp.
  3036--3048, 2012.  \url{https://doi.org/10.1016/j.na.2011.11.036}
\BIBentrySTDinterwordspacing

\bibitem{Nit14}
C.~Nitsch, ``On the first {D}irichlet {L}aplacian eigenvalue of regular
  polygons,'' \emph{Kodai Math. J.}, vol.~37, no.~3, pp. 595--607, 2014.

\bibitem{PoSz51}
G.~P\'{o}lya and G.~Szeg\H{o}, \emph{Isoperimetric {I}nequalities in
  {M}athematical {P}hysics}, ser. Annals of Mathematics Studies, no. 27.\hskip
  1em plus 0.5em minus 0.4em\relax Princeton University Press, Princeton, N.
  J., 1951.

\bibitem{Reichel09}
\BIBentryALTinterwordspacing
W.~Reichel, ``Characterization of balls by {R}iesz-potentials,'' \emph{Ann.
  Mat. Pura Appl. (4)}, vol. 188, no.~2, pp. 235--245, 2009.
  \url{https://doi.org/10.1007/s10231-008-0073-6}
\BIBentrySTDinterwordspacing

\bibitem{Sak21}
\BIBentryALTinterwordspacing
S.~Sakata, ``Analytic characterization of equilateral triangles,'' \emph{Ann.
  Mat. Pura Appl. (4)}, vol. 200, no.~5, pp. 2191--2212, 2021.
  \url{https://doi.org/10.1007/s10231-021-01075-9}
\BIBentrySTDinterwordspacing

\bibitem{SoZa04}
\BIBentryALTinterwordspacing
A.~Y. Solynin and V.~A. Zalgaller, ``An isoperimetric inequality for
  logarithmic capacity of polygons,'' \emph{Ann. of Math. (2)}, vol. 159,
  no.~1, pp. 277--303, 2004.  \url{https://doi.org/10.4007/annals.2004.159.277}
\BIBentrySTDinterwordspacing

\end{thebibliography}

\end{document}